\newtheorem{theorem}{Theorem}[section]
\newtheorem{lemma}{Lemma}[section]
\newtheorem{problem}{Problem}[section]
\newtheorem{corollary}{Corollary}[section]
\newtheorem{prop}{Proposition}[section]
\renewcommand\proofname{\it{Proof}}
\title{\bf Fourier analysis on distance-regular Cayley graphs over abelian groups}
\author{Xiongfeng Zhan$^a$,  Xueyi Huang$^{a,}$\thanks{Corresponding author.},   Lu Lu$^b$ \setcounter{footnote}{-1}\footnote{\emph{Email address:} zhanxfmath@163.com (X. Zhan), huangxy@ecust.edu.cn (X. Huang), lulumath@csu.edu.cn (L. Lu).}\\[2mm]
	\small $^a$School of Mathematics, East China University of Science and Technology, \\
	\small  Shanghai 200237, P. R. China\\
	\small $^b$School of Mathematics and Statistics, Central South University,\\
	\small Changsha, Hunan, 410083, P. R. China
}
\date{}
\begin{document}
	\maketitle
	\begin{abstract}

The problem of constructing or characterizing strongly regular Cayley graphs (or equivalently, regular partial difference sets) has garnered significant attention over the past half-century. In  2003,  Miklavi\v{c} and Poto\v{c}nik [European J. Combin. 24 (2003) 777--784] expanded upon this field by achieving a complete characterization of distance-regular Cayley graphs over cyclic groups through the method of Schur rings. Building on this work, Miklavi\v{c} and Poto\v{c}nik [J. Combin. Theory Ser. B 97 (2007) 14--33] formally proposed the problem of characterizing distance-regular Cayley graphs for arbitrary classes of groups. Within this framework, abelian groups hold particular significance, as numerous distance-regular graphs with classical parameters are precisely Cayley graphs over abelian groups.  In this paper, we employ Fourier analysis on abelian groups to establish connections between distance-regular Cayley graphs over abelian groups and combinatorial objects in finite geometry. By combining these insights with classical results from finite geometry, we classify all distance-regular Cayley graphs over the group  $\mathbb{Z}_n \oplus \mathbb{Z}_p$, where $p$ is an odd prime.
		
		\par\vspace{2mm}
		
		\noindent{\bfseries Keywords:} Distance-regular graph, Cayley graph, Schur ring,  Fourier analysis, finite geometry
		\par\vspace{1mm}
		
		\noindent{\bfseries 2010 MSC:} 05E30, 05C25, 05C50
	\end{abstract}

	\section{Introduction}\label{section::1}

	In graph theory, distance-regular graphs form a class of regular graphs with strong combinatorial symmetry. A connected graph $\Gamma$ is distance-regular if for every vertex $x$ of $\Gamma$, the distance partition of $\Gamma$ with respect to $x$ forms an equitable partition, and all such equitable partitions share a common quotient matrix. While this defining condition is purely combinatorial, the concept of distance-regular graphs holds fundamental importance in design theory and coding theory. Furthermore, it exhibits deep connections with diverse mathematical disciplines including finite group theory, finite geometry, representation theory, and association schemes \cite{BV22,DKT16}.

	Within the study of distance-regular graphs, the characterization and construction of graphs with specific types or parameters constitute essential research problems. Cayley graphs --- vertex-transitive graphs defined through groups and their subsets --- emerge as natural candidates for such investigations. This relevance stems from two observations: most known distance-regular graphs are vertex-transitive \cite{DK06}, and numerous infinite families of strongly regular graphs (the diameter-$2$ case of distance-regular graphs) arise from Cayley graph constructions  \cite{CTZ11,FMX15,FX12,GXY13,LM95,LM05,M84,M94,M89,Mom13,Mom14,Mom17,MX14,MX18,MX22,QCM24,TPF10}.

	Let $G$ be a finite group with identity $e$, and let $S$ be an inverse closed subset of $G\setminus\{e\}$. The \textit{Cayley graph} $\mathrm{Cay}(G,S)$ is defined as the graph with vertex set $G$, where two vertices $g$ and $h$ are adjacent if and only if $g^{-1}h\in S$. The set $S$ is referred to as the  \textit{connection set} of $\mathrm{Cay}(G,S)$. It is well-known that $\mathrm{Cay}(G,S)$ is connected if and only if  $\langle S \rangle=G$, and that $G$ acts regularly on the vertex set of $\mathrm{Cay}(G,S)$ via left multiplication. 
	
	In 2007, Miklavi\v{c} and Poto\v{c}nik  \cite{MP07} (see also \cite[Problem 71]{DKT16})  initiated the systematic study of characterizing distance-regular Cayley graphs by posing the following fundamental problem:
	
	\begin{problem}\label{prob::main}
		For a class of groups $\mathcal{G}$, determine all distance-regular graphs, which are Cayley graphs on a group in $\mathcal{G}$.
	\end{problem}
	
	Early progress on Problem \ref{prob::main} was made by Miklavi\v{c} and Poto\v{c}nik \cite{MP03}, who classified distance-regular Cayley graphs over cyclic groups (known as \textit{circulants}) using the framework of Schur rings.
	
	\begin{theorem}[{\cite[Theorem 1.2, Corollary 3.7]{MP03}}] \label{thm::cir_DRG}
		Let $\Gamma$ be a circulant on $n$ vertices. Then $\Gamma$ is distance-regular if and only if it is isomorphic to one of the following graphs:
		\begin{enumerate}[(i)]\setlength{\itemsep}{0pt}
			\item the cycle $C_n$;
			\item the complete graph $K_n$;
			\item the complete multipartite graph $K_{t\times m}$, where $tm=n$;
			\item the complete bipartite graph without a perfect matching $K_{m,m}-mK_2$, where $2m = n$ and $m$ is odd;
			\item the Paley graph $P(n)$, where $n\equiv 1\pmod 4$  is prime.
		\end{enumerate}
		In particular, $\Gamma$ is a primitive distance-regular graph if and only if $\Gamma\cong K_n$, or $n$ is prime, and $\Gamma\cong C_n$ or $P(n)$.
	\end{theorem}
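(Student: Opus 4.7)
The natural approach is via Schur rings. First I would establish the key dictionary: a distance-regular Cayley graph $\Gamma = \mathrm{Cay}(\mathbb{Z}_n, S)$ of diameter $d$ yields a Schur subring $\mathcal{A}(\Gamma)$ of the group algebra $\mathbb{C}[\mathbb{Z}_n]$ whose basic classes are exactly the distance sets $S_i = \{g \in \mathbb{Z}_n : \mathrm{dist}_\Gamma(0, g) = i\}$ for $i = 0, 1, \ldots, d$. Indeed these sets partition $\mathbb{Z}_n$ and are inverse-closed, and the scheme identities $A_i A_j = \sum_k p_{ij}^k A_k$ for the distance matrices translate directly to $\underline{S_i}\cdot\underline{S_j} = \sum_k p_{ij}^k\, \underline{S_k}$ in $\mathbb{C}[\mathbb{Z}_n]$. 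Conversely, a Schur ring whose basic classes coincide with the distance partition of a graph forces that graph to be distance-regular.

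With this dictionary in place, the plan is to classify all possible $\mathcal{A}(\Gamma)$ by invoking the structural description of Schur rings over cyclic groups due to Leung and Man (building on earlier work of Wielandt and P\"oschel): up to a tensor decomposition over coprime factorizations of $n$, every Schur ring on $\mathbb{Z}_n$ is either (i) primitive and lives on $\mathbb{Z}_p$ for some prime $p$, arising as the orbit Schur ring of a subgroup of $\mathrm{Aut}(\mathbb{Z}_p)$; or (ii) a nontrivial wreath product with respect to some proper subgroup $H \leq \mathbb{Z}_n$. I would then run a case analysis on $\mathcal{A}(\Gamma)$. If it is trivial, $\Gamma \cong K_n$. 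If it is nontrivial primitive on $\mathbb{Z}_p$, a short analysis of generalized Paley graphs leaves only $K_p$ (the full group $\mathrm{Aut}(\mathbb{Z}_p)$) and the classical Paley graph $P(p)$ with $p \equiv 1 \pmod 4$ (the index-$2$ subgroup of squares, which is inverse-closed precisely under this congruence). If it is a nontrivial wreath product, the wreath factors must themselves be distance-regular-compatible Schur rings; this recovers the cycle $C_n$ (as iterated tensor/wreath of $\{\pm 1\}$-orbit rings across the prime-power parts of $n$), the multipartite graph $K_{t \times m}$ (both wreath factors trivial), and, when $n = 2m$ with $m$ odd so that $\mathbb{Z}_n$ has a unique involution, the bipartite graph $K_{m,m} - mK_2$.

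The principal obstacle I expect is the wreath-product case. There one must verify rigidly that the basic classes on each wreath fibre are themselves the distance classes of some smaller distance-regular Cayley graph, and then splice the intersection numbers $p_{ij}^k$ consistently across fibres. This requires a careful induction on $n$, and the odd-order condition in the $K_{m,m} - mK_2$ case should drop out of the constraint that the deepest distance class consists of a single element (the unique involution). Finally, the primitivity characterization in the ``in particular'' clause follows by observing that $K_{t \times m}$ is antipodal, $K_{m,m} - mK_2$ is bipartite, and $C_n$ becomes imprimitive for composite $n$ because some $\Gamma_i$ with $\gcd(i, n) > 1$ is disconnected, leaving only $K_n$, $C_p$, and $P(p)$ with $p$ prime as primitive examples.
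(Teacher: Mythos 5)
This theorem is not proved in the present paper; it is imported from Miklavi\v{c}--Poto\v{c}nik \cite{MP03}, so your attempt can only be measured against the strategy of that source (which the paper summarizes as ``the method of Schur rings''). Your opening dictionary --- the distance module of a Cayley graph is a Schur ring if and only if the graph is distance-regular --- is exactly their Proposition 3.6, quoted here as Lemma~\ref{lem::Schur_DRG}, and is fine. The genuine gap lies in the structural classification you then invoke. The Leung--Man theorem for Schur rings over cyclic groups is not the dichotomy ``primitive on $\mathbb{Z}_p$ versus nontrivial (generalized) wreath product, up to tensor factors over coprime parts of $n$''; it has a further case, the cyclotomic (orbit) Schur rings $V(K,\mathbb{Z}_n)$ with $K\leq \mathbb{Z}_n^{*}$, which for composite $n$ are in general neither primitive, nor tensor products, nor generalized wreath products. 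The cycle $C_n$ lives precisely there: its distance module is $V(\{\pm 1\},\mathbb{Z}_n)$, whose basic sets $\{\pm a\}$ are neither of product form over a coprime factorization (those classes would have size up to $4$) nor unions of cosets of any nontrivial subgroup, so your wreath branch cannot ``recover'' $C_n$ as claimed, and your case analysis is not exhaustive. A correct version of this route must add the cyclotomic case and prove that among orbit Schur rings over a composite cyclic group only $K=\{\pm 1\}$ admits a $P$-polynomial ordering. Relatedly, your primitive branch on $\mathbb{Z}_p$ omits $C_p$ (which \emph{is} primitive for $p$ prime, as the ``in particular'' clause records) and does not explain why the orbit scheme of a subgroup $K<\mathbb{Z}_p^{*}$ of index greater than $2$ is never $P$-polynomial; ``a short analysis of generalized Paley graphs'' is doing real work there.

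For comparison, the proof in \cite{MP03} --- and the template this paper follows for its own Theorem~\ref{thm::main} --- avoids the full Leung--Man classification. It needs only the Wielandt-type fact that a primitive Schur ring over a cyclic group of composite order is trivial (Lemma~\ref{lem::Schur ring}), which disposes of the primitive case ($K_n$ for composite $n$; a Burnside-type analysis on $\mathbb{Z}_p$ yielding $K_p$, $C_p$ and $P(p)$). The imprimitive case is then handled graph-theoretically: an imprimitive distance-regular graph of valency at least $3$ is bipartite or antipodal (Lemma~\ref{lem::imprimitive}), and its halved graphs and antipodal quotient are again distance-regular Cayley graphs over subgroups or quotient groups of $\mathbb{Z}_n$ (the circulant analogue of Lemma~\ref{lem::DRG_dq}), hence again circulants on fewer vertices, so one inducts; this is where $K_{t\times m}$, $K_{m,m}-mK_2$ and the oddness of $m$ drop out. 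That reduction sidesteps entirely the fibre-splicing difficulties you anticipate in the wreath-product case.
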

	
	 Subsequently, Miklavi\v{c} and Poto\v{c}nik \cite{MP03} extended their approach by combining Schur rings with Fourier analysis to characterize distance-regular Cayley graphs over dihedral groups through difference sets \cite{MP07}. Further advancements were achieved by Miklavi\v{c} and \v{S}parl \cite{MS14,MS20}, who employed elementary group theory and structural analysis to classify distance-regular Cayley graphs over abelian groups and generalized dihedral groups under minimality conditions on their connection sets. Significant contributions include the work of Abdollahi, van Dam, and Jazaeri \cite{ADJ17}, who classified distance-regular Cayley graphs of diameter at most $3$ with least eigenvalue $-2$. van Dam and Jazaeri \cite{DJ19,DJ21} later determined some distance-regular Cayley graphs with small valency and provided some characterizations for  bipartite distance-regular Cayley graphs with diameter  $3$ or $4$. For additional results on distance-regular Cayley graphs, including recent developments, we refer to  \cite{HD22,HDL23,HLZ23,ZLH23}.

As is well-known, an effective approach for constructing distance-regular Cayley graphs, particularly strongly regular graphs, involves utilizing Cayley graphs over abelian groups. For instance, numerous infinite families of strongly regular Cayley graphs over the additive group of finite fields have been constructed via methods such as cyclotomic classes \cite{FX12,FMX15}, Gauss sums with even indices \cite{Wu13}, three-valued Gauss periods \cite{Mom17}, and $p$-ary (weakly) regular bent functions \cite{CTZ11,TPF10,QCM24}. Furthermore, it is established that certain distance-regular graphs with classical parameters are precisely distance-regular Cayley graphs over abelian groups. Notable examples include Hamming graph, halved cube, bilinear forms graph, alternating forms graph, Hermitian forms graph, affine $E_6(q)$ graph, and extended ternary Golay code graph (see \cite[p. 194]{BCN89}). However, providing a complete solution to Problem \ref{prob::main} for general abelian groups remains challenging. 
	
In this paper, we investigate distance-regular Cayley graphs over abelian groups with small diameters. We establish necessary conditions for their existence, which are closely connected to finite geometry (see Sections \ref{section::5} and \ref{section::6} for details). Moreover, we demonstrate that these necessary conditions prove particularly useful for the following significant class of abelian groups.

	\begin{problem}\label{prob::main2}
		Let $n$ and $m$ be positive integers with $\mathrm{gcd}(n,m)\neq 1$. 	Characterize all distance-regular Cayley graphs over the group $\mathbb{Z}_n\oplus \mathbb{Z}_m$.
	\end{problem}

	Significant progress has been made toward Problem \ref{prob::main2}. In 2005, Leifman and Muzychuck \cite{LM05} classified strongly regular Cayley graphs over $\mathbb{Z}_{p^s} \oplus \mathbb{Z}_{p^s}$ for odd primes  $p$. Recently, the authors \cite{ZLH23} characterized all distance-regular Cayley graphs over $\mathbb{Z}_{p^s} \oplus \mathbb{Z}_{p}$ and $\mathbb{Z}_{n} \oplus \mathbb{Z}_{2}$ for odd primes $p$.  In this work, we extend these results by providing a complete classification of distance-regular Cayley graphs over  $\mathbb{Z}_{n} \oplus \mathbb{Z}_{p}$, where $p$ is an odd prime. Notably,   $\mathbb{Z}_{n} \oplus \mathbb{Z}_{p}$ becomes cyclic when  $p\nmid n$. Hence, by Theorem \ref{thm::cir_DRG}, we focus exclusively on the case $p\mid n$. Our main result is stated as follows.
	
	\begin{theorem}\label{thm::main}
		Let $p$ be an odd prime, and let $\Gamma$ be a Cayley graph over $\mathbb{Z}_{n}\oplus\mathbb{Z}_{p}$ with $p\mid n$. Then $\Gamma$ is distance-regular if and only if it is isomorphic to one of the following graphs:
		\begin{enumerate}[(i)]\setlength{\itemsep}{0pt}
			\item the complete graph $K_{np}$;
			\item the complete multipartite graph $K_{t\times m}$ with  $tm=np$, which is the complement of the union of $t$ copies of $K_m$;
			\item the complete bipartite graph without a $1$-factor $K_{\frac{np}{2},\frac{np}{2}} - \frac{np}{2}K_2$, where $n\equiv 2\pmod 4$;
			\item the graph $\mathrm{Cay}(\mathbb{Z}_{p}\oplus\mathbb{Z}_{p},S)$ with  $S=\cup_{i=1}^rH_i\setminus\{(0,0)\}$ for some $2\leq r\leq p-1$, where  $H_i$ ($i=1,\ldots,r$) are subgroups of $\mathbb{Z}_{p}\oplus\mathbb{Z}_{p}$ with order $p$.
		\end{enumerate}
		In particular, the graph in (iv) is the line graph of a transversal design $TD(r,p)$, which is a strongly regular graph with parameters $(p^2,r(p-1),p+r^2-3r,r^2-r)$.
	\end{theorem}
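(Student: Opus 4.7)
The plan is to split the argument into a straightforward ``if'' direction and a substantially more involved ``only if'' direction. For the ``if'' direction, each of the graphs (i)--(iv) is a well-known distance-regular graph, so I would only verify that each one can be realized as a Cayley graph over $\mathbb{Z}_n\oplus\mathbb{Z}_p$ with $p\mid n$. Cases (i)--(iii) follow from standard constructions using suitable subgroups of $\mathbb{Z}_n\oplus\mathbb{Z}_p$ of the appropriate orders. For (iv), I would interpret $\mathbb{Z}_p\oplus\mathbb{Z}_p$ as the point set of the Desarguesian affine plane $AG(2,p)$, view the subgroups $H_1,\dots,H_r$ of order $p$ as $r$ lines through the origin, and obtain the blocks of a transversal design $TD(r,p)$ by taking all translates of these lines; a direct counting argument in $AG(2,p)$ then yields the strongly regular parameters $(p^2, r(p-1), p+r^2-3r, r^2-r)$ and identifies $\Gamma$ with the corresponding line graph.

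For the ``only if'' direction, assume $\Gamma=\mathrm{Cay}(G,S)$ is distance-regular with $G=\mathbb{Z}_n\oplus\mathbb{Z}_p$ and $p\mid n$. Since $\Gamma$ is distance-regular, its Bose--Mesner algebra coincides with its adjacency algebra, and via the regular action of $G$ this algebra becomes a commutative Schur ring $\mathfrak{S}$ over $G$ having $S$ as one of its basic sets. I plan to classify the possibilities for $\mathfrak{S}$ using three ingredients: the cyclic classification of Theorem~\ref{thm::cir_DRG} applied to cyclic subgroups and cyclic quotients of $G$; the duality between $\mathfrak{S}$ and a Schur ring $\mathfrak{S}^{\vee}$ on the character group $\widehat{G}\cong G$; and the formalism of polynomial addition sets, which encodes the algebraic identities in $\mathbb{Z}G$ that $S$ satisfies as a consequence of distance-regularity.

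I would then separate the imprimitive and primitive cases. If $\Gamma$ is imprimitive, then $\mathfrak{S}$ contains a proper nontrivial subgroup $K\leq G$ as a union of basic sets, and the fibers of $G\to G/K$ form blocks of imprimitivity; combining the cyclic classification applied to $K$ and $G/K$ with the standard bipartite-double and halved-graph arguments forces $\Gamma$ into case (ii) or (iii). If $\Gamma$ is primitive and $\mathfrak{S}$ has rank $2$, then $\Gamma=K_{np}$ is case (i). The remaining hard case is when $\Gamma$ is primitive and $\mathfrak{S}$ has rank at least $3$; here I plan to use the Schur ring duality to show that $n$ is forced to equal $p$, so that $G\cong\mathbb{Z}_p\oplus\mathbb{Z}_p$, and then identify $\Gamma$ with one of the graphs in case (iv) via the affine-plane structure.

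The main obstacle I anticipate is the step that forces $n=p$ in the primitive rank-$\geq 3$ case. My approach would be to exploit the duality: the dual Schur ring $\mathfrak{S}^{\vee}$ lives on $\widehat{G}\cong G$ and controls the eigenvalues of $\Gamma$, which, being eigenvalues of a primitive distance-regular graph, are algebraic integers constrained to a very rigid set. Combining this with the polynomial addition set identities that $S$ satisfies, I would show that every projection of $S$ to a cyclic quotient of $G$ of order strictly greater than $p$ produces a connection set incompatible with the primitive part of the distance-regular circulant classification. This collapses $G$ to $\mathbb{Z}_p\oplus\mathbb{Z}_p$, after which a direct check in $AG(2,p)$ of which unions of $1$-dimensional subspaces yield strongly regular parameters completes the proof.
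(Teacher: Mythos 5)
Your overall architecture (easy ``if'' direction; split the ``only if'' direction into primitive and imprimitive cases; use Schur rings, duality, and polynomial addition sets) matches the paper's toolkit, but you have misplaced where the difficulty actually sits, and the step you identify as routine is the one that carries essentially all of the work. In the paper, the primitive case with $n\neq p$ is immediate: by Lemma~\ref{lem::Schur ring} (no non-trivial primitive Schur ring over an abelian group of composite order with a cyclic Sylow subgroup) and Kochendorfer's theorem (Lemma~\ref{lem::Schur_dq}), the distance module of a primitive $\Gamma$ must be the trivial Schur ring, so $\Gamma\cong K_{np}$; and the case $n=p$, where case (iv) arises, is handled by the prior classification over $\mathbb{Z}_p\oplus\mathbb{Z}_p$ (Lemma~\ref{lem::1}). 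Your proposed alternative for the primitive case --- projecting $S$ to cyclic quotients and invoking the circulant classification --- does not obviously work, because the projection of the connection set of a primitive distance-regular Cayley graph to a cyclic quotient need not be the connection set of a distance-regular circulant, so Theorem~\ref{thm::cir_DRG} has nothing to say about it.

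The genuine gap is in your imprimitive case, which you dispose of with ``standard bipartite-double and halved-graph arguments.'' Knowing that the antipodal quotient or halved graph is a complete graph (or a line graph of a transversal design) does not force $\Gamma$ into cases (ii) or (iii): one must still exclude antipodal non-bipartite distance-regular covers of $K_m$ with diameter $3$ and antipodal bipartite distance-regular graphs with diameter $4$ over $\mathbb{Z}_n\oplus\mathbb{Z}_p$. These exclusions are Lemmas~\ref{lem::key1} and~\ref{lem::key2}, and they occupy most of the paper; they require reducing the antipodal index to a prime (Propositions~\ref{prop::prime_r_antipodal} and~\ref{prop::anti_bipart}), extracting polynomial addition sets and relative difference sets from the intersection-array identities (Propositions~\ref{prop::an1} and~\ref{prop::an2}, Lemma~\ref{lem::NRDS}), Ma's lemma on cyclic Sylow subgroups, Galois/cyclotomic-field arguments, and the direction bound \eqref{Direction} in $AG(2,p)$ to control cliques in halved graphs (Lemma~\ref{lem::key0}). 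Your proposal names some of these objects as ``ingredients'' but gives no indication of how they would be deployed to close these cases, so as written the argument does not establish the theorem.
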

	
The paper is organized as follows.  In Section \ref{section::2}, we review fundamental results on association schemes and distance-regular graphs. Section \ref{section::3} presents algebraic characterizations for distance-regular Cayley graphs established by Miklavi\v{c} and Poto\v{c}nik. In Section \ref{section::4}, we introduce key combinatorial objects and classical theorems from finite geometry. Sections \ref{section::5} and \ref{section::6} utilize Fourier analysis on abelian groups to derive necessary conditions for the existence of distance-regular Cayley graphs over abelian groups with small diameter. Finally, Section \ref{section::7} provides a complete proof of Theorem \ref{thm::main}.

	\section{Association schemes and distance-regular graphs}\label{section::2}
	In this section, we introduce some notations and properties related to association schemes and  distance-regular graphs. 
	
	\subsection{Association schemes}\label{subsec::2.1}
	Let $X$ be a finite set, and let  $\mathfrak{R}=\{R_0,R_1,\ldots, R_d\}$ be a set of non-empty subsets of the direct product $X\times X$. Then  $\mathfrak{X}=(X,\mathfrak{R})$ is called an \textit{association scheme of class $d$} if the following  conditions (i)--(iv) hold.
	\begin{enumerate}[(i)]
		\item $R_0=\{(x,x)\mid x\in X\}$.
		\item $\mathfrak{R}=\{R_0, R_1,\ldots, R_d\}$ is  a partition of $X\times X$, that is, $X\times X=R_0\cup R_1\cup\cdots\cup R_d$, and $R_i\cap R_j=\varnothing$ if $i\neq j$.
		\item For each $i\in \{0,\ldots,d\}$, there exists some $i'\in \{0,\ldots,d\}$ such that $R_i^T=R_{i'}$, where $R_i^T=\{(x,y)\mid (y,x)\in R_i\}$.
		\item  Fix $i,j,k\in \{0,\ldots,d\}$. Then the number of elements $z$ of $X$ such that $(x,z)\in R_i$ and $(z,y)\in R_j$ is constant for any $(x,y)\in R_k$. This number is called the \textit{intersection number}, and denoted by $p_{i,j}^k$.
	\end{enumerate}
	Here $R_i$ is called the $i$-th \textit{relation}. Moreover, $\mathfrak{X}=(X,\mathfrak{R})$ is called a \textit{commutative association scheme} if the following condition holds.
	\begin{enumerate}
		\item[(v)] For any $i,j,k\in \{0,\ldots,d\}$, $p_{i,j}^k=p_{j,i}^k$. 
	\end{enumerate}
	Also, $\mathfrak{X}=(X,\mathfrak{R})$ is called a  \textit{symmetric association scheme} if the following condition holds.
	\begin{enumerate}
		\item[(vi)]	For all $i\in \{0,\ldots,d\}$,  $R_i^T=R_i$.
	\end{enumerate}
	It is known that a symmetric association scheme is always commutative.

	Let $\mathfrak{X}=(X,\mathfrak{R})$ be a commutative association scheme, and let $M_X(\mathbb{C})$ be the full matrix algebra of $|X|\times|X|$-matrices over the complex field $\mathbb{C}$ whose rows and columns are indexed by the elements of $X$. For each $i\in\{0,1,\ldots,d\}$, the \textit{adjacency matrix}  $A_i\in M_X(\mathbb{C})$ of the relation $R_i$ is defined as:
	$$
	A_i(x,y)=
	\begin{cases}
		1, &\mbox{if $(x,y)\in R_i$},\\
		0, &\mbox{if $(x,y)\notin R_i$}.\\
	\end{cases}
	$$
	Then by conditions (i)--(v), we obtain the following conditions (I)--(V).
	\begin{enumerate}[(I)]
		\item $A_0=I$, where $I$ is the identity matrix of order $|X|$.
		\item $A_0+A_1+\cdots+A_d=J$, where $J$ is the all-ones matrix of order $|X|$.
		\item For each $i\in \{0,\ldots,d\}$, there exists some $i'\in \{0,\ldots,d\}$ such that $A_i^T=A_{i'}$.
		\item For any $i,j\in \{0,\ldots,d\}$, there exists non-negative integers $p_{i,j}^k$ ($0\leq k\leq d$) such that 
		$$
		A_iA_j=\sum_{k=0}^dp_{i,j}^kA_k.
		$$
		\item For any $i,j\in \{0,\ldots,d\}$, we have $A_iA_j=A_jA_i$.
	\end{enumerate}
	Let $\mathfrak{A}$ be the linear subspace of $M_X(\mathbb{C})$ spanned by the adjacency matrices $A_0,A_1,\ldots,A_d$ of $X$. By (IV) and (V), $\mathfrak{A}$ is a $(d + 1)$-dimensional commutative subalgebra of $M_X(\mathbb{C})$ under the ordinary multiplication. Moreover, by (II), for any $i,j\in\{0,1,\ldots,d\}$, we have
	$$
	A_i\circ A_j=\delta_{i,j}A_i,
	$$
	where `$\circ$' denotes the Hadamard product. This implies that $\mathfrak{A}$ is also a commutative subalgebra of $M_X(\mathbb{C})$  under the Hadamard product. Thus $\mathfrak{A}$ has two algebraic structures, and is  called the \textit{Bose–Mesner algebra}. It is known that $\mathfrak{A}$ is semisimple, and so there exists a basis of primitive idempotents $E_0=\frac{1}{|X|}J, E_1,\ldots, E_d$ in $\mathfrak{A}$. That is, every matrix in $\mathfrak{A}$ can be expressed as a linear combination of $E_0, E_1, \ldots, E_d$, and it holds that $\sum_{i=0}^d E_i = I$ and $E_i E_j = \delta_{ij} E_i$ for all $i, j \in \{0, \ldots, d\}$, where $\delta_{ij} = 1$ if $i = j$, and $\delta_{ij} = 0$ otherwise. This implies the existence of complex numbers $P_i(j) \in \mathbb{C}$ such that
	\[ A_i E_j = P_i(j) E_j \]
	for all $i, j \in \{0, \ldots, d\}$. For any fixed $i\in\{0,1,\ldots,d\}$, the values $P_i(0), P_i(1), \ldots, P_i(d)$ constitute a complete set of the eigenvalues of $A_i$, and furthermore,
	\[
	A_i=\sum_{j=0}^dP_i(j)E_j.
	\]
	On the other hand, since $\mathfrak{A}$ is closed under the  Hadamard product, for any $i,j\in\{0,1,\ldots,d\}$, there exists constants $q_{i,j}^k$ ($0\leq k\leq d$) such that 
	$$
	E_i\circ E_j=\frac{1}{|X|}\sum_{k=0}^d q_{i,j}^kE_k.
	$$
	The structure constants $q_{i,j}^k$ ($0\leq i,j,k\leq d$) of $\mathfrak{A}$ with respect to the Hadamard product are called the  \textit{Krein parameters}. According to \cite[Chapter II, Theorem 3.8]{Bannai}, the Krein parameters $q_{i,j}^k$ are non-negative real numbers. 
	
	\subsection{Schur ring and its duality}

	Let $G$ be a finite group. Given a commutative ring $R$ with identity, the \textit{group algebra} $RG$ of $G$ over $R$ is the set of formal sums $\sum_{g\in G}r_gg$, where $r_g\in R$, equipped with the binary operations
	\begin{equation*}
		\begin{aligned}
			&\sum_{g\in G}r_gg+\sum_{g\in G}s_gg=\sum_{g\in G}(r_g+s_g)g,\\
			&\left(\sum_{g\in G}r_gg\right)\left(\sum_{h\in G}s_hh\right)=\sum_{g,h\in G}(r_gs_h)(gh),\\
		\end{aligned}
	\end{equation*}
	and the scalar multiplication
	\begin{equation*}
		r\sum_{g\in G}r_gg=\sum_{g\in G}(rr_g)g,
	\end{equation*}
	where $r_g,s_g,s_h,r\in R$.
	For a subset $S\subseteq G$, let $\underline{S}$ denote the element $\sum_{s\in S}s$ of $RG$.  In particular, if $S$ contains exactly one element $s$, we write $s$ instead of $\underline{S}$ for simplicity.
	
	Let $\mathbb{Z}$ be the ring of integers, and let $\mathbb{Z}G$ be  the group algebra of $G$ over $\mathbb{Z}$. For an integer $m$ and an element $\sum_{g\in G} r_g g\in \mathbb{Z}G$, we define
	$$\left(\sum_{g\in G} r_g g\right)^{(m)}=\sum_{g\in G} r_g g^m\in \mathbb{Z}G.$$
	Suppose that $\{N_0,N_1,\ldots,N_d\}$ is a partition of $G$ satisfying
	\begin{enumerate}[(i)]
		\item $N_0=\{e\}$;
		\item for any $i\in \{1,\ldots,d\}$, there exists some $j\in \{1,\ldots,d\}$ such that $\underline{N_i}^{(-1)}=\underline{N_j}$;
		\item  for any $i,j\in \{1,\ldots,d\}$, there exist integers $p_{i,j}^k$ ($0\leq k\leq d$) such that $$\underline{N_i}\cdot \underline{N_j}=\sum_{k=0}^rp_{i,j}^k \cdot \underline{N_k}.$$
	\end{enumerate}
	Then the $\mathbb{Z}$-module $\mathcal{S}(G)$ spanned by $\underline{N_0},\underline{N_1},\ldots,\underline{N_d}$ is a subalgebra of $\mathbb{Z}G$, and is called a \textit{Schur ring} over $G$. In this situation, the  basis $\{\underline{N_0},\underline{N_1},\ldots,\underline{N_d}\}$ is called the \textit{simple basis} of the Schur ring $\mathcal{S}(G)$. 	We say that the Schur ring  $\mathcal{S}(G)$ is \textit{primitive} if $\langle N_i\rangle=G$ for every $i\in \{1,\ldots,d\}$. In partitular, if $N_0=\{e\}$ and $N_1=G\setminus \{e\}$, then the Schur ring spanned  by $\underline{N_0}$ and $\underline{N_1}$ is called \textit{trivial}. Clearly, a trivial Schur ring is primitive. 
	
	Now suppose that $G$ is an abelian group. For convenience, we express $G$ as
	\begin{equation*}
		G = \mathbb{Z}_{n_1}\oplus\mathbb{Z}_{n_2}\oplus\cdots\oplus\mathbb{Z}_{n_r},
	\end{equation*}
	where  $n_i$ is a power of some prime number for $1 \leq i \leq r$. It is clear that the order of $G$ is the product $|G| = n_1 n_2 \cdots n_r$. We note that every element $g \in G$ can be uniquely represented as a tuple $g = (g_1, g_2, \ldots, g_r)$, with each $g_i \in \mathbb{Z}_{n_i}$ for $1 \leq i \leq r$. For an element $g \in G$, we define $\chi_g$ as the function from $G$ to  $\mathbb{C}$ by letting
	\begin{equation}\label{equ::char_Ab}
		\chi_g(x) = \prod_{i=1}^r \zeta_{n_i}^{g_i x_i},~\text{for all } x=(x_1,x_2,\ldots,x_r) \in G,
	\end{equation}
	where $\zeta_{n_i}$ denotes a primitive $n_i$-th root of unity. 
	
	Let $\mathcal{S}(G)=\mathrm{span}\{\underline{N_0},\underline{N_1},\ldots,\underline{N_d}\}$ be a Schur ring over the abelian group $G$. For any $i\in\{0,1,\ldots,d\}$, we denote by $R_i=\{(g,h)\mid h^{-1}g\in N_i\}$. Then one can verify that  $\mathfrak{X}=(G,\mathfrak{R}=\{R_0,R_1,\ldots,R_d\})$ is exactly a commutative association scheme (cf. \cite[p. 105]{Bannai}). Moreover, if $N_i$ is inverse closed for any $i\in\{0,1,\ldots,d\}$, then $\mathfrak{X}$ is a symmetric association scheme. The intersection numbers and Krein parameters of $\mathfrak{X}$ are also called the \textit{intersection numbers} and \textit{Krein parameters} of $\mathcal{S}(G)$, respectively. It is known that all Krein parameters of $\mathcal{S}(G)$ are integers. Furthermore, we have the following  classic results about Schur rings over abelian groups.
	
	\begin{lemma}[{\cite[Theorem 3.4]{MP09}}]\label{lem::Schur ring}
		Let $G$ be an abelian group of composite order with at least one cyclic Sylow subgroup. Then there is no non-trivial primitive Schur ring over $G$.
	\end{lemma}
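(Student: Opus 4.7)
The plan is to proceed by contradiction. Suppose $\mathcal{S}(G)=\mathrm{span}\{\underline{N_0},\underline{N_1},\ldots,\underline{N_d}\}$ is a non-trivial primitive Schur ring over $G$. A first step is to recast the primitivity hypothesis in a more workable form: by a theorem of Wielandt, for every basic set $N_i$ the subgroup $\langle N_i\rangle$ is itself an $\mathcal{S}$-subgroup (that is, $\underline{\langle N_i\rangle}\in\mathcal{S}(G)$). Consequently, primitivity in the sense of the paper is equivalent to saying that the only $\mathcal{S}$-subgroups are $\{e\}$ and $G$. I would record this reformulation so that later we can work with $\mathcal{S}$-subgroups rather than with individual basic sets.

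Next I would invoke the duality theory of Schur rings over abelian groups (Tamaschke). Using the characters $\chi_g$ in \eqref{equ::char_Ab}, the Fourier transform carries $\mathcal{S}(G)$ to a dual Schur ring $\mathcal{S}^\ast(G^\ast)$ over the character group $G^\ast\cong G$, under which the basic sets of $\mathcal{S}(G)$ correspond to the primitive idempotents of $\mathcal{S}^\ast(G^\ast)$ and vice versa. Under this duality, $\mathcal{S}$-subgroups of $G$ correspond to quotient $\mathcal{S}^\ast$-subgroups of $G^\ast$ and, dually, the $\mathcal{S}^\ast$-subgroups of $G^\ast$ correspond to certain $\mathcal{S}$-quotient structures on $G$. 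Primitivity of $\mathcal{S}(G)$ thus translates into the condition that $\mathcal{S}^\ast(G^\ast)$ admits no proper non-trivial quotient (equivalently, that $\mathcal{S}^\ast$ has no non-trivial reflection onto a proper subgroup). This is the key step that allows us to trade the condition on $G$ for information about $G^\ast$.

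Now I would use the hypothesis that some Sylow subgroup $P$ of $G$ is cyclic. Via duality, $P^\ast\le G^\ast$ is a cyclic Sylow subgroup of $G^\ast$. Using the standard fact that the setwise intersection and the pointwise product of $\mathcal{S}^\ast$ with $P^\ast$ yield an $\mathcal{S}^\ast$-substructure, the intersection $P^\ast\cap N_i^\ast$ and the projection of $N_i^\ast$ to $G^\ast/P^\ast$ give rise to Schur ring structures on $P^\ast$ and $G^\ast/P^\ast$. By iterating, one reduces to the situation of a Schur ring over a cyclic group of composite order (namely $P^\ast$ when $|P|>1$, or a cyclic quotient of $G^\ast$ otherwise). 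At that point I would apply the classical Schur--Wielandt theorem, which asserts that any primitive Schur ring over a cyclic group of composite order is trivial. Dualizing the resulting triviality back through the Fourier transform gives the contradiction that $\mathcal{S}(G)$ must be trivial.

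The hardest part will be the reduction step in the third paragraph: Schur ring theory over abelian groups does not restrict to Sylow subgroups in a naive way, and one has to combine the duality with the $\mathcal{S}$-subgroup calculus (including the rational closure of basic sets, i.e. the action of $\mathrm{Gal}(\mathbb{Q}(\zeta_{|G|})/\mathbb{Q})$) to force a proper $\mathcal{S}$-subgroup in either $\mathcal{S}(G)$ or $\mathcal{S}^\ast(G^\ast)$. Getting the bookkeeping right so that the cyclic Sylow assumption produces precisely the nontrivial primitive Schur ring on a cyclic group of composite order (which is then killed by Schur--Wielandt) is the essential obstacle.
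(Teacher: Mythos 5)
This statement is not proved in the paper at all: it is imported verbatim as \cite[Theorem 3.4]{MP09}, so there is no internal argument to compare yours against. Judged on its own, your proposal has a genuine gap at its central step. In the third paragraph you assert that intersecting the basic sets of the dual Schur ring with the cyclic Sylow subgroup $P^\ast$, and projecting them to $G^\ast/P^\ast$, ``give rise to Schur ring structures on $P^\ast$ and $G^\ast/P^\ast$.'' This is false in general: the restriction of a Schur ring to a subgroup $H$ is a Schur ring on $H$ only when $H$ is an $\mathcal{S}$-subgroup (i.e. $\underline{H}$ lies in the ring), and likewise the quotient construction requires $H$ to be an $\mathcal{S}$-subgroup. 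But the primitivity hypothesis, via your own reformulation in the first paragraph and the anti-isomorphism of $\mathcal{S}$-subgroup lattices under duality, says precisely that neither $\mathcal{S}(G)$ nor its dual has any proper non-trivial $\mathcal{S}$-subgroup --- so $P^\ast$ is not one, and the reduction has no starting point. You flag this as ``the hardest part,'' but no mechanism is offered; the duality step by itself buys nothing here, since $G^\ast\cong G$ and the Sylow subgroup is no more an $\mathcal{S}$-subgroup on the dual side than on the original side.

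There is also a secondary problem with the endpoint of the intended reduction. You want to land on a primitive Schur ring over a cyclic group of \emph{composite} order and invoke Schur--Wielandt. But the hypothesis only gives that $|G|$ is composite; the cyclic Sylow subgroup may well have prime order $p$, and $\mathbb{Z}_p$ admits non-trivial primitive Schur rings (e.g. the one attached to a Paley graph), so Schur--Wielandt would not apply to $P^\ast$ alone. The known proofs of this theorem do not proceed by restricting to the Sylow subgroup; they work globally, using congruences of the form $\underline{T}^{(p)}\equiv\underline{T}^{\,p}\pmod{p}$ for basic sets $T$ together with the uniqueness of the subgroup of order $p$ inside the cyclic Sylow $p$-subgroup (the same circle of ideas as Lemma \ref{lem::Ma}) to manufacture a proper non-trivial $\mathcal{S}$-subgroup, contradicting primitivity. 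As it stands, your outline identifies the right auxiliary facts (the $\mathcal{S}$-subgroup reformulation, duality, Schur--Wielandt) but the bridge between them is missing.
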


	\begin{lemma}[{\cite[Kochendorfer's theorem]{KT}}]\label{lem::Schur_dq}
		Let $p$ be a prime, and let $a,b$ be positive integers with $a\neq b$. Then there is no non-trivial primitive Schur ring over the group $\mathbb{Z}_{p^{a}}\oplus\mathbb{Z}_{p^{b}}$.
	\end{lemma}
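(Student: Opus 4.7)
The plan is to argue by contradiction. Assume $\mathcal{S}$ is a non-trivial primitive Schur ring over $G=\mathbb{Z}_{p^a}\oplus\mathbb{Z}_{p^b}$ with $1\leq a<b$ (without loss of generality). The aim is to exhibit a proper non-trivial subgroup $H<G$ with $\underline{H}\in\mathcal{S}$: decomposing $\underline{H}$ in the simple basis forces $H$ to be a disjoint union of basic sets, so some basic set $N_i$ with $i\neq 0$ is contained in $H$, giving $\langle N_i\rangle\leq H\neq G$ and contradicting primitivity.

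The first ingredient is the Schur--Wielandt closure of $\mathcal{S}$ under coprime power maps: for every integer $m$ with $\gcd(m,p)=1$, the map $g\mapsto mg$ permutes the basic sets of $\mathcal{S}$. Consequently every basic set is a union of orbits of $\mathcal{U}=(\mathbb{Z}/p^{a+b}\mathbb{Z})^\times$ acting on $G$ by scalar multiplication, and these $\mathcal{U}$-orbits can be enumerated by the pair of $p$-adic valuations $(v_p(x),v_p(y))$ of a representative $(x,y)\in G$, together with (when both coordinates have intermediate valuations) an additional compatibility constraint arising from the fact that a single unit modulo $p^{a+b}$ must act on the two factors simultaneously.

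The second ingredient is the duality of Schur rings over abelian groups developed in Section~2.2: $\mathcal{S}$ admits a dual Schur ring $\hat{\mathcal{S}}$ on $\hat{G}\cong G$, subgroup sums correspond under $H\mapsto H^\perp$, and $\mathcal{S}$ is primitive if and only if $\hat{\mathcal{S}}$ is primitive. The decisive step exploits the asymmetry $a<b$ through the canonical characteristic filtration $\{0\}\subsetneq p^{b-1}G\subsetneq\cdots\subsetneq pG\subsetneq G$, in particular the distinguished subgroup $p^aG=\{0\}\oplus p^a\mathbb{Z}_{p^b}$, which is cyclic of order $p^{b-a}$ and is the unique cyclic term of this filtration. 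Combining the $\mathcal{U}$-orbit description of $G$ with its parallel on $\hat{G}$, I would argue that at least one of $\underline{p^aG}$ or $\underline{(p^aG)^\perp}$ must belong to $\mathcal{S}$ or $\hat{\mathcal{S}}$ respectively, furnishing the required proper non-trivial subgroup sum.

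The hardest part is this final step: Schur--Wielandt for coprime powers alone does not pin down a specific characteristic subgroup, since the non-coprime map $g\mapsto pg$ need not preserve $\mathcal{S}$. The hypothesis $a\neq b$ is essential, because when $a=b$ there are genuinely non-trivial primitive Schur rings (for instance on $\mathbb{Z}_p\oplus\mathbb{Z}_p$ via the affine planes of Theorem~\ref{thm::main}(iv)); any proof must therefore detect a structural asymmetry in $G$. Executing the last step rigorously should amount to a careful comparison of the $\mathcal{U}$-orbit partition of $G$ with the simple basis of $\hat{\mathcal{S}}$, crucially using that $p^aG$ has exponent strictly smaller than that of $G$ only when $a<b$.
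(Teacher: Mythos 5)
The paper does not prove this lemma at all: it is Kochend\"orfer's classical theorem, quoted verbatim from \cite{KT} (see also Wielandt's \emph{Finite Permutation Groups}, \S 25), so the only fair comparison is between your sketch and that classical argument. Your overall strategy --- produce a proper non-trivial subgroup $H$ with $\underline{H}\in\mathcal{S}$ and contradict primitivity --- is the right one, and your observation that any proof must break down for $a=b$ (because of the genuinely primitive Schur rings on $\mathbb{Z}_p\oplus\mathbb{Z}_p$ coming from Lemma \ref{lem::1}) is a good sanity check. But the proposal has two real problems.

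First, a factual error in your ``first ingredient'': the Schur--Wielandt theorem on coprime power maps says that $g\mapsto mg$ with $\gcd(m,p)=1$ \emph{permutes} the basic sets, not that it fixes each one. Hence basic sets are in general \emph{not} unions of $\mathcal{U}$-orbits (already over $\mathbb{Z}_{13}$ the quadratic residues form a basic set of the Paley scheme, while $\mathcal{U}$ acts transitively on the nonzero elements). Your enumeration of $\mathcal{U}$-orbits by valuation pairs therefore does not constrain the simple basis in the way you need. Second, and more seriously, the decisive step is only asserted: you write that ``at least one of $\underline{p^aG}$ or $\underline{(p^aG)^{\perp}}$ must belong to $\mathcal{S}$ or $\widehat{\mathcal{S}}$,'' but no argument is given, and the two tools you have actually assembled (coprime-power maps and duality) are available verbatim over $\mathbb{Z}_p\oplus\mathbb{Z}_p$, where the conclusion is false; so they cannot by themselves yield this claim. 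The ingredient that actually detects the asymmetry $a\neq b$ in the classical proof is the \emph{non-coprime} power map analysis: the congruence $\underline{T}^{(p)}\equiv\underline{T}^{\,p}\pmod{p\,\mathbb{Z}G}$ together with the Schur--Wielandt principle of extracting the support of a fixed coefficient, applied so as to show that the set of elements of non-maximal order --- which is the proper subgroup $\mathbb{Z}_{p^a}\oplus p\mathbb{Z}_{p^b}$ of index $p$ precisely because $a<b$ --- is an $\mathcal{S}$-set. Without this (or an equivalent substitute), the proof does not go through; as it stands the hard part of the theorem has been restated rather than proved.
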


	\begin{lemma}[{\cite[Chapter II, Theorem 6.3]{Bannai}}]\label{lem::dual_Schur_ring}
		Let $G$ be an abelian group, and let $\mathcal{S}(G)=\mathrm{span}\{\underline{N_0},\underline{N_1},$ $\ldots,\underline{N_d}\}$ be a Schur ring over $G$. Let  $\mathcal{R}$ be the equivalence relation on $G$ defined by $g\mathcal{R}h$ if and only if $\chi_g(\underline{N_i})=\chi_h(\underline{N_i})$ for all $i\in \{0,1,\ldots,d\}$. If $E_0,E_1,\ldots,E_f$ are the  equivalence classes of $G$ with respect to $\mathcal{R}$, then $f=d$ and the $\mathbb{Z}$-submodule $\widehat{\mathcal{S}}(G)=\mathrm{span}\{\underline{E_0},\underline{E_1},\ldots,\underline{E_d}\}$ of $\mathbb{Z}G$ is a Schur ring over $G$ with intersection numbers $q_{i,j}^k$, where $q_{i,j}^k$ ($0\leq i,j,k\leq d$) are the Krein parameters of $\mathcal{S}(G)$.
	\end{lemma}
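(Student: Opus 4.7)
The plan is to diagonalize the Bose--Mesner algebra $\mathfrak{A}$ of the associated commutative scheme $\mathfrak{X}=(G,\mathfrak{R})$ by means of the characters of the abelian group $G$, and then to transport the Krein identity through the resulting Fourier picture in order to obtain the multiplication rule for $\widehat{\mathcal{S}}(G)$. I would work with the embedding $a=\sum_{x}a_x x\mapsto M_a$, $M_a(g,h)=a_{g^{-1}h}$, which sends $\underline{N_i}$ to the adjacency matrix $A_i$ and realizes $\mathfrak{A}$ as a subalgebra of $\mathbb{C}G$ in which ordinary matrix multiplication is the group convolution and Hadamard matrix multiplication is pointwise multiplication of coefficients.

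Since each $A_i$ commutes with left translations, the vector $v_\chi(x)=\chi(x)$ is a simultaneous eigenvector of every $A_i$ with eigenvalue $\chi(\underline{N_i})$, for each character $\chi\in\widehat{G}$. The common eigenspaces of $\mathfrak{A}$ are therefore spanned by the sets $\{v_{\chi_g}:g\in\mathcal{C}\}$ as $\mathcal{C}$ ranges over the $\mathcal{R}$-classes of $G$; counting them against the dimension $d+1$ of $\mathfrak{A}$ forces $f=d$. Using orthogonality of characters, the primitive idempotent $E_j$ projecting onto $\mathrm{span}\{v_{\chi_g}:g\in E_j\}$ corresponds under the embedding to the group algebra element
\[
\frac{1}{|G|}\sum_{h\in G}\Bigl(\sum_{g\in E_j}\chi_g(h)\Bigr)h=\frac{1}{|G|}\sum_{h\in G}\chi_h(\underline{E_j})\,h,
\]
where the last equality uses the self-duality $\chi_g(h)=\chi_h(g)$ exhibited by the formula (\ref{equ::char_Ab}) for the abelian group $G$.

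Next, I would transport the Krein identity $E_i\circ E_j=\frac{1}{|G|}\sum_k q_{i,j}^k E_k$ through the embedding. Since the Hadamard product becomes pointwise multiplication of coefficients, comparing the coefficient of $h$ on both sides yields $\chi_h(\underline{E_i})\,\chi_h(\underline{E_j})=\sum_k q_{i,j}^k\,\chi_h(\underline{E_k})$, and multiplicativity of characters rewrites the left side as $\chi_h(\underline{E_i}\cdot\underline{E_j})$. Because this holds for every $h\in G$ and the characters separate $\mathbb{C}G$, Fourier inversion gives $\underline{E_i}\cdot\underline{E_j}=\sum_k q_{i,j}^k\,\underline{E_k}$ in $\mathbb{C}G$. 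The $\underline{E_k}$ are $\mathbb{Z}$-linearly independent because they have disjoint supports, and the left side lies in $\mathbb{Z}G$, so the $q_{i,j}^k$ must be integers; their non-negativity is the Krein condition recalled in Section~\ref{subsec::2.1}.

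Finally, to confirm that $\widehat{\mathcal{S}}(G)$ is indeed a Schur ring, I would verify the remaining axioms. The equivalence class of $e$ is $\{e\}$ itself, because $\chi_g(\underline{N_i})=|N_i|$ for all $i$ forces $\chi_g\equiv 1$ on $\bigcup_i N_i=G$, hence $g=e$. Closure under inversion follows from $\chi_{g^{-1}}(\underline{N_i})=\overline{\chi_g(\underline{N_i})}$, which induces a permutation of the $\mathcal{R}$-classes. The main obstacle I anticipate is the normalization and conjugation bookkeeping in the Fourier step above: a missed complex conjugate would produce a Schur ring supported on $\{g^{-1}:g\in E_j\}$ instead of on $E_j$, so care is needed to pin down the precise correspondence $\chi_g\leftrightarrow v_{\chi_g}$ and the orientation of the self-duality $\chi_g(h)=\chi_h(g)$.
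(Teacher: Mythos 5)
The paper gives no proof of this lemma --- it is cited directly from Bannai--Ito [Chapter II, Theorem 6.3] --- so there is no in-paper argument to compare yours against. Your proof is the standard derivation of the duality theorem for translation association schemes, and it is correct and complete: the characters $v_{\chi_g}$ simultaneously diagonalize the Bose--Mesner algebra, the $\mathcal{R}$-classes index the maximal common eigenspaces (which, counted against $\dim\mathfrak{A}=d+1$, gives $f=d$), the primitive idempotent $E_j$ pulls back to $\frac{1}{|G|}\sum_{h}\chi_h(\underline{E_j})\,h$ via the symmetry $\chi_g(h)=\chi_h(g)$, and the Krein identity translates, coefficient by coefficient, into $\underline{E_i}\cdot\underline{E_j}=\sum_k q_{i,j}^k\,\underline{E_k}$; disjointness of supports then forces the $q_{i,j}^k$ to be the (integer) structure constants. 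The bookkeeping worry you flag at the end is real but harmless for the statement as given: in your convention the eigenvalue of $A_i$ on $v_{\chi_g}$ is $\chi_g(\underline{N_i}^{(-1)})=\overline{\chi_g(\underline{N_i})}$ rather than $\chi_g(\underline{N_i})$, but complex conjugation is injective, so the partition $\mathcal{R}$ it induces is identical, and the self-duality $\chi_g(h)=\chi_h(g)$ you need is immediate from the explicit formula \eqref{equ::char_Ab}.
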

	The Schur ring  $\widehat{\mathcal{S}}(G)$ defined in Lemma \ref{lem::dual_Schur_ring} is called the \textit{dual} of $\mathcal{S}(G)$.

	\subsection{Distance-regular graphs}

	Let $\Gamma$ be a connected graph with vertex set $V(\Gamma)$ and edge set $E(\Gamma)$. The \textit{distance} $\partial_\Gamma(x,y)$ between two vertices $x,y$ of $\Gamma$ is the length of a shortest path connecting them in $\Gamma$, and the \textit{diameter} $d_\Gamma$ of $\Gamma$ is the maximum value of the distances between vertices of $\Gamma$. For $x\in V(\Gamma)$, let $S_i^\Gamma(x)$ denote the set of vertices at distance $i$ from $x$ in $\Gamma$. In particular, we denote $S^\Gamma(x)=S_1^\Gamma(x)$. When $\Gamma$ is clear from the context, we use  $\partial(x,y)$, $d$, $S_i(x)$ and $S(x)$ instead of  $\partial_\Gamma(x,y)$, $d_\Gamma$, $S_i^\Gamma(x)$ and $S^\Gamma(x)$, respectively. For  $x,y\in V(\Gamma)$ with $\partial(x,y)=i$ ($0\leq i\leq d$), let
	$$
	c_i(x,y)=|S_{i-1}(x)\cap S(y)|,~~ a_i(x,y)=|S_{i}(x)\cap S(y)|, ~~ b_i(x,y)=|S_{i+1}(x)\cap S(y)|.
	$$
	Here  $c_0(x,y)=b_d(x,y)=0$. The graph $\Gamma$ is called \textit{distance-regular} if  $c_i(x,y)$, $b_i(x,y)$ and $a_i(x,y)$ only depend on the distance $i$ between $x$ and $y$ but not the choice of $x,y$.
	
	For a distance-regular graph $\Gamma$ with diameter $d$, we denote  $c_i=c_i(x,y)$, $a_i=a_i(x,y)$ and $b_i=b_i(x,y)$, where $x,y\in V(\Gamma)$ and $\partial(x,y)=i$. Note that $c_0=b_d=0$, $a_0=0$ and $c_1=1$. Also, we set $k_i=|S_i(x)|$, where $x\in V(\Gamma)$. Clearly, $k_i$ is independent of the choice of $x$. By definition, $\Gamma$ is a regular graph with valency $k=b_0$, and
	$a_i+b_i+c_i=k$ for $0\leq i\leq d$.  The array $\{b_0,b_1,\ldots,b_{d-1};c_1,c_2,\ldots,c_d\}$ is called the \textit{intersection array} of $\Gamma$. In particular, $\lambda=a_1$ is the number of common neighbors between two adjacent vertices in $\Gamma$, and $\mu=c_2$ is the number of common neighbors between two vertices  at distance $2$ in $\Gamma$.  A distance-regular graph on $n$ vertices with valency $k$ and diameter $2$ is called a  \textit{strongly regular graph} with parameters $(n,k,\lambda=a_1,\mu=c_2)$.

	Suppose that $\Gamma$ is a distance-regular graph of diameter $d$ with vertex set $X=V(\Gamma)$ and edge set $R=E(\Gamma)$. For $0\leq i\leq d$, we define 
	$$
	R_i=\{(x,y)\in X\times X\mid \partial(x,y)=i\}.
	$$
	Then one can verify that the sets $R_i$ ($0\leq i\leq d$)  satisfy the conditions (i)--(vi) in Subsection \ref{subsec::2.1}, and so $\mathfrak{X}=(X,\mathfrak{R}=\{R_0,R_1,\ldots, R_d\})$  is a symmetric association scheme. In this context, the intersection numbers $p_{i,j}^k$ and Krein parameters $q_{i,j}^k$ of $\mathfrak{X}$ are also called the \textit{intersection numbers} and \textit{Krein parameters} of $\Gamma$, respectively. Note that  $p_{1,i+1}^i=b_i$, $p_{1,i}^i=a_i$ and $p_{1,i-1}^i=c_i$ for $0\leq i\leq d$. Additionally, for $i,j,k\in\{0,1,\ldots,d\}$, if $p_{i,j}^k\neq 0$ then $k\leq i+j$, and moreover,  $p_{i,j}^{i+j}\neq 0$.

	A symmetric association scheme together with an ordering of relations  is called \textit{$P$-polynomial}  if $p_{i,j}^k\neq 0$ implies $k\leq i+j$ for all $i,j,k\in\{0,1,\ldots,d\}$, and also $p_{i,j}^{i+j}\neq 0$ for all $i,j\in\{0,1,\ldots,d\}$. By definition, the symmetric association scheme derived from a distance-regular graph is $P$-polynomial. Conversely, every $P$-polynomial association scheme is derived from a distance-regular graph. Therefore, a distance-regular graph is equivalent to a $P$-polynomial association scheme. 
	\begin{lemma}[{\cite[Proposition 2.7.1]{BCN89}}] \label{lem::P-polynomial}
		Let $\mathfrak{X}=(X,\mathfrak{R})$ be a symmetric association scheme with an ordering of relations $R_0, R_1, \ldots, R_d$. Then  $\mathfrak{X}$ is $P$-polynomial if and only if $(X,R_1)$ is a distance-regular graph.
	\end{lemma}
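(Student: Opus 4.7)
The plan is to prove both directions by establishing, under the appropriate hypothesis, the identification $R_i = \{(x,y) \in X \times X : \partial_\Gamma(x,y) = i\}$, where $\Gamma = (X, R_1)$. Once this identification is in hand, the DRG intersection numbers $c_i = p_{1,i-1}^i$, $a_i = p_{1,i}^i$, $b_i = p_{1,i+1}^i$ automatically depend only on $i$, and conversely the triangle inequality controls exactly when $p_{i,j}^k$ can be nonzero. So the whole proof reduces to matching up relations with graph distances.

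For the forward direction (P-polynomial $\Rightarrow$ distance-regular), I would prove the identification by induction on $i$. The cases $i = 0, 1$ are immediate from $R_0 = \{(x,x) : x \in X\}$ and the definition of $\Gamma$. For the inductive step, given $(x,y) \in R_i$, the condition $p_{1,i-1}^i \neq 0$ (which is the bottom-rung P-polynomial condition with $1+(i-1)=i$) guarantees a neighbor $z$ of $x$ with $(z,y) \in R_{i-1}$, so the induction hypothesis yields $\partial_\Gamma(x,y) \le 1 + (i-1) = i$. For the reverse inequality I would run a secondary induction on walk length $\ell$ to show that any $x$-$y$ walk of length $\ell$ in $\Gamma$ forces $(x,y) \in R_m$ for some $m \le \ell$; this follows from the expansion $A_m A_1 = \sum_k p_{m,1}^k A_k$ combined with the upper-triangularity $p_{m,1}^k \neq 0 \Rightarrow k \le m + 1$ granted by the P-polynomial hypothesis. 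Combining both bounds pins down the distance exactly, and connectedness of $\Gamma$ is automatic since the relations $R_0, \ldots, R_d$ exhaust $X \times X$.

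For the backward direction (distance-regular $\Rightarrow$ P-polynomial), I would work with the natural distance-compatible indexing, so that $R_i$ is the distance-$i$ relation in $\Gamma$. Then $p_{i,j}^k \neq 0$ entails the existence of vertices $x, y, z$ with $\partial_\Gamma(x,z) = i$, $\partial_\Gamma(z,y) = j$, and $\partial_\Gamma(x,y) = k$, and the triangle inequality immediately yields $k \le i + j$. For the bottom-rung condition $p_{i,j}^{i+j} \neq 0$ whenever $i + j \le d$, I would pick any $(x,y)$ at distance $i + j$ (such pairs exist since $d$ is the diameter of $\Gamma$) and let $z$ be the $i$-th vertex along a geodesic from $x$ to $y$; by construction $\partial_\Gamma(x,z) = i$ and $\partial_\Gamma(z,y) = j$, furnishing the required witness.

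The main obstacle is the ``anti-overshoot'' step in the forward direction: the claim that a walk of length $\ell$ in $\Gamma$ cannot produce a pair inside some $R_m$ with $m > \ell$. This is precisely where the P-polynomial hypothesis earns its keep, through the upper-triangular multiplication rule $A_m A_1 \in \mathrm{span}(A_0, A_1, \ldots, A_{m+1})$, which lets the bound propagate one edge at a time. Without the full P-polynomial condition one cannot rule out ``long-range'' contributions when extending a walk by a single edge. Once that step is in place, the remainder is bookkeeping: the relations and distance classes coincide, and the parameters $c_i, a_i, b_i$ are just the intersection numbers $p_{1,i-1}^i, p_{1,i}^i, p_{1,i+1}^i$, delivering the equivalence.
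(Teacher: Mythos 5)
The paper offers no proof of this lemma; it is quoted from \cite[Proposition 2.7.1]{BCN89}, so your attempt can only be measured against the standard argument there. Your forward direction is exactly that argument and is sound: the condition $p_{1,i-1}^{i}\neq 0$ gives $\partial(x,y)\le i$ on $R_i$, the secondary induction via $p_{m,1}^{k}\neq 0\Rightarrow k\le m+1$ rules out overshooting, and together these force $R_i$ to be the distance-$i$ relation of $(X,R_1)$, after which connectedness and the constancy of $c_i=p_{1,i-1}^{i}$, $a_i=p_{1,i}^{i}$, $b_i=p_{1,i+1}^{i}$ are immediate.

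The backward direction, however, has a genuine gap concealed in the phrase ``work with the natural distance-compatible indexing, so that $R_i$ is the distance-$i$ relation.'' Given only that $(X,R_1)$ is distance-regular, no re-indexing of the scheme classes need achieve this: the scheme may properly refine the distance scheme of $(X,R_1)$. A concrete counterexample lives inside this very paper's setting: on $\mathbb{Z}_5\oplus\mathbb{Z}_5$ take the six subgroups $H_1,\dots,H_6$ of order $5$ and form the $3$-class scheme with $R_1=(H_1\cup H_2)\setminus\{0\}$, $R_2=(H_3\cup H_4)\setminus\{0\}$, $R_3=(H_5\cup H_6)\setminus\{0\}$ (a fusion of the amorphic spread scheme, hence an association scheme). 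Then $(X,R_1)\cong K_5\times K_5$ is a primitive distance-regular graph of diameter $2$, but its distance-$2$ relation is $R_2\cup R_3$; one computes $\underline{S}^2=k\cdot e+\lambda\underline{S}+\mu(\underline{R_2}+\underline{R_3})$ with $\mu\neq 0$, so $p_{1,1}^{3}\neq 0$ and the scheme is $P$-polynomial with respect to no ordering. So the ``only if'' implication as literally stated is false; the missing content is precisely the identification $R_i=\{(x,y)\mid\partial(x,y)=i\}$, which \cite[Proposition 2.7.1]{BCN89} builds into the equivalent condition rather than deriving it. (One can show by induction on distance that each distance class is a \emph{union} of scheme classes, but nothing forces it to be a single class.) Once that identification is taken as a hypothesis, your triangle-inequality and geodesic-witness arguments are correct; and since the paper only ever invokes the forward implication (in its Lemma on dual graphs), nothing downstream is affected.
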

	
	Analogously, a symmetric association scheme together with an ordering of primitive idempotents is called  \textit{$Q$-polynomial} if $q_{i,j}^k\neq 0$ implies $k\leq i+j$ for all $i,j,k\in\{0,1,\ldots,d\}$, and also $q_{i,j}^{i+j}\neq 0$ for all $i,j\in\{0,1,\ldots,d\}$.
	In particular, we say that a distance-regular graph is $Q$-\textit{polynomial} if the symmetric association scheme derived from it is  $Q$-polynomial.

	A $P$-polynomial (resp. $Q$-polynomial) association scheme is called \textit{bipartite} (resp. $Q$-\textit{bipartite}) if $p_{i,j}^k= 0$ (resp. $q_{i,j}^k= 0$) whenever $i+j+k$ is odd. A $P$-polynomial (resp. $Q$-polynomial) association scheme is called \textit{antipodal} (resp. $Q$-\textit{antipodal}) if $p_{d,d}^k= 0$ (resp. $q_{d,d}^k= 0$) whenever $k\notin \{0,d\}$. 
	
	\begin{lemma}[{\cite[p. 241]{BCN89}}]\label{lem::dual imprimitivity}
		Let $\Gamma$ be a $Q$-polynomial distance-regular graph. Then $\Gamma$ is bipartite (resp. antipodal) if and only if  $\Gamma$ is $Q$-antipodal (resp. $Q$-bipartite), that is, the symmetric association scheme derived from $\Gamma$ is $Q$-antipodal (resp. $Q$-bipartite).
	\end{lemma}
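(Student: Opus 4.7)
The plan is to reduce both equivalences to a single duality about the eigenmatrices $P=(P_i(j))$ and $Q=(Q_i(j))$ of the scheme, which satisfy the orthogonality relation $PQ=|X|\,I$. On the $P$-polynomial side one can show that $\Gamma$ is bipartite if and only if the last column of $P$ has the alternating sign pattern $P_i(d)=(-1)^{i}k_i$, and antipodal if and only if the last row of $P$ takes values only in $\{\pm k_d\}$. On the $Q$-polynomial side, the mirror statements for the last row and last column of $Q$ characterize $Q$-bipartite and $Q$-antipodal respectively. After these translations, the orthogonality $PQ=|X|\,I$ matches the two sides.

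First I would establish the spectral characterization of bipartiteness. Since $\Gamma$ is bipartite if and only if $-k$ is an eigenvalue, I would use the three-term recurrence underlying the $Q$-polynomial ordering (which makes the dual intersection matrix $L_1^*$ tridiagonal) to argue that $-k$ must be attached to $E_d$, yielding $P_1(d)=-k$. Plugging $\theta=-k$ into the $P$-polynomial recurrence
\[
P_{i+1}(d)=\frac{(\theta-a_i)\,P_i(d)-c_i\,P_{i-1}(d)}{b_i}
\]
and using $a_i=0$ for bipartite graphs produces the alternating pattern $P_i(d)=(-1)^{i}k_i$. Antipodality is handled analogously: it is equivalent to $A_d^{2}\in\mathrm{span}\{A_0,A_d\}$, which, translated into the idempotent basis via $PQ=|X|\,I$, forces the last row of $P$ to be a sign sequence times $k_d$.

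Next, using these patterns, I would verify directly, by substituting into the Bannai--Ito type formulas that express each Krein parameter $q^{k}_{i,j}$ as a weighted sum over $\ell$ of products $P_\ell(i)P_\ell(j)P_\ell(k)$, that the bipartite pattern forces $q^{k}_{d,d}=0$ for $k\notin\{0,d\}$ and the antipodal pattern forces $q^{k}_{i,j}=0$ whenever $i+j+k$ is odd. This proves bipartite $\Rightarrow$ $Q$-antipodal and antipodal $\Rightarrow$ $Q$-bipartite. The reverse implications follow by interchanging the roles of $P$ and $Q$ in the argument: the same reasoning applied to the dual side, that is, substituting the corresponding sign patterns of $Q$ into the dual Bannai--Ito identity that expresses $p^{k}_{i,j}$ as a weighted sum of triple products of entries of $Q$, yields the converses.

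The main obstacle will be the careful bookkeeping around the $Q$-polynomial ordering. Specifically, one needs to identify which primitive idempotent plays the role of $E_d$ and to establish the spectral characterizations of bipartite and antipodal in terms of the $P$-matrix; both of these rely on the tridiagonality of $L_1^*$ to pin down the correct idempotent and the exact sign structure. Once these identifications are in place, the passage between the four conditions becomes essentially a mechanical substitution into the orthogonality identities for $P$ and $Q$, and the symmetry of the statement (bipartite $\Leftrightarrow$ $Q$-antipodal versus antipodal $\Leftrightarrow$ $Q$-bipartite) is a direct reflection of the symmetry between $P$ and $Q$.
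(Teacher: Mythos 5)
The paper does not actually prove this lemma; it is imported verbatim from \cite[p.~241]{BCN89}, so the only comparison available is with the standard argument in that reference. For the equivalence ``bipartite $\Leftrightarrow$ $Q$-antipodal'' your outline is essentially that argument: the bipartition sign vector $\sigma$ yields the rank-one primitive idempotent $F=\frac{1}{|X|}\sum_i(-1)^iA_i$ with $A_1F=-kF$ (equivalently your column $P_i(j_0)=(-1)^ik_i$), and $F\circ E_j$ is always $\frac{1}{|X|}$ times a single primitive idempotent, which is dual antipodality once one knows $j_0=d$. The step you flag but do not supply --- why the $(-k)$-idempotent is \emph{last} in the $Q$-polynomial ordering --- is the real content: it follows because $q_{j_0,1}^k\neq 0$ for exactly one $k$, while the $Q$-polynomial property forces both $q_{j_0,1}^{j_0-1}\neq 0$ and $q_{j_0,1}^{j_0+1}\neq 0$ whenever both indices are admissible, so $j_0\in\{0,d\}$. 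That gap is fillable.

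The other equivalence is where the proposal genuinely fails. Antipodality says $(A_0+A_d)^2=r(A_0+A_d)$, which gives $P_d(j)\in\{-1,k_d\}$ with $k_d=r-1$; this is your sign pattern $\{\pm k_d\}$ only when $r=2$. For $r\geq 3$ no ``mechanical substitution'' into the triple-product formula can produce $q_{i,j}^k=0$ for $i+j+k$ odd, because the implication is false there: $K_{3,3,3}$ is antipodal and (having diameter $2$) $Q$-polynomial, yet in its unique $Q$-polynomial ordering one computes $E_1\circ E_1=\frac{1}{9}(6E_0+3E_1+6E_2)$, so $q_{1,1}^1=3\neq 0$ and the scheme is not $Q$-bipartite. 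The theorem in \cite{BCN89} concerns antipodal \emph{$2$-covers} (the converse direction even shows $Q$-bipartite forces $k_d=1$, since $A_d^2=k_d^2I$ gives $k_d=k_d^2$), and the substantive part of the proof is to use the $Q$-polynomial hypothesis to reduce to $r=2$ \emph{before} any eigenvalue substitution. Your outline omits this reduction entirely, and the paper's unqualified phrasing of the lemma inherits the same defect; note that the application in Proposition \ref{prop::non-antipodal_bipartite} only uses the directions ``bipartite $\Rightarrow$ $Q$-antipodal'' and ``$Q$-bipartite $\Rightarrow$ antipodal,'' which are the ones that survive.
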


	%We observe $a_0=0$ and $c_1=1$. A graph $\Gamma$ with diameter $d$ is \textit{antipodal} if the relation $\mathcal{R}$ on $V(\Gamma)$ defined by $u\mathcal{R}v\Leftrightarrow\partial(u,v)\in\{0,d\}$ is an equivalence relation. A distance-regular graph $\Gamma$ with diameter $d$ is antipodal if and only if $b_i=c_{d-i}$ for every $i\neq \lfloor\frac{d}{2}\rfloor$.
	%Then for an antipodal distance-regular graph, we have that $b_{d-1}=c_1=1$, $c_d=b_0=k$ and $a_d=0$. 

	\subsection{Primitivity of distance-regular graphs}
	
	Let $\Gamma$ be a graph, and let $\mathcal{B}=\{B_1,\ldots,B_\ell\}$ be a partition of $V(\Gamma)$ (here $B_i$ are called \textit{blocks}). The \textit{quotient graph} of $\Gamma$ with respect to $\mathcal{B}$, denoted by $\Gamma_\mathcal{B}$, is the graph with vertex set $\mathcal{B}$, and with  $B_i,B_j$ ($i\neq j$) adjacent if and only if there exists at least one edge between  $B_i$ and $B_j$ in $\Gamma$. Moreover, we say that $\mathcal{B}$ is an \textit{equitable partition} of $\Gamma$ if there are integers $b_{ij}$ ($1\leq i,j\leq \ell$) such that every vertex in $B_i$ has exactly $b_{ij}$ neighbors in $B_j$. In particular, if every block of $\mathcal{B}$ is an independent set, and between any two blocks there are either no edges or there is a perfect matching, then $\mathcal{B}$ is an equitable partition of  $\Gamma$. In this situation,  $\Gamma$ is called a \textit{cover} of its quotient graph $\Gamma_\mathcal{B}$, and the blocks are called \textit{fibres}. If $\Gamma_\mathcal{B}$ is connected, then all fibres have the same size, say $r$, called \textit{covering index}.

	A graph $\Gamma$ with diameter $d$ is \textit{antipodal} if the relation $\mathcal{R}$ on $V(\Gamma)$ defined by $x\mathcal{R}y\Leftrightarrow\partial(x,y)\in\{0,d\}$ is an equivalence relation. Under this equivalence relation, the corresponding equivalence classes are called \textit{antipodal classes}. A cover of index $r$, in which the fibres are antipodal classes, is called \textit{an $r$-fold  antipodal cover} of its quotient. In particular, if $\Gamma$ is an antipodal distance-regular graph with diameter $d$, then all antipodal classes have the same size, say $r$, and form an equitable partition $\mathcal{B}^\ast$ of $\Gamma$. The quotient graph $\overline{\Gamma}:=\Gamma_{\mathcal{B}^\ast}$ is called the \textit{antipodal quotient} of $\Gamma$. If $d=2$, then $\Gamma$ is a complete multipartite graph. If $d\geq 3$, then the edges between  two distinct antipodal classes of $\Gamma$ form an empty set or a perfect matching. Thus $\Gamma$ is an $r$-fold antipodal  cover of $\overline{\Gamma}$ with the antipodal classes as its fibres. Moreover, it is known that a distance-regular graph $\Gamma$ with diameter $d$ is antipodal if and only if $b_i=c_{d-i}$ for every $i\neq \lfloor\frac{d}{2}\rfloor$.
	%Then for an antipodal distance-regular graph, we have that $b_{d-1}=c_1=1$, $c_d=b_0=k$ and $a_d=0$. 

	Let $\Gamma$ be a distance-regular graph with diameter $d$. For  $i\in\{1,\ldots,d\}$, the \textit{$i$-th distance graph} $\Gamma_i$  is  the graph with vertex set  $V(\Gamma)$ in which two distinct vertices are adjacent if and only if they are at distance $i$ in $\Gamma$. If, for any $1\le i\le d$, $\Gamma_i$ is connected, then $\Gamma$ is {\it primitive}. Otherwise, $\Gamma$ is {\it imprimitive}.  It is  known that an imprimitive distance-regular graph with valency at least $3$ is either bipartite, antipodal, or both \cite[Theorem 4.2.1]{BCN89}. In particular, if $\Gamma$ is bipartite, then $\Gamma_2$ has two connected components, which are called the \textit{halved graphs} of $\Gamma$ and denoted by $\Gamma^+$ and $\Gamma^-$. For convenience, we use  $\frac{1}{2}\Gamma$ to represent any one of these two graphs. 
	
	\begin{lemma}[{\cite[p. 140, p. 141]{BCN89}}] \label{lem::imprimitive}
		Let $\Gamma$ denote an imprimitive distance-regular graph with diameter $d$ and valency $k \geq 3$. Then the following hold.
		\begin{enumerate}[(i)]\setlength{\itemsep}{0pt}
			\item If $\Gamma$ is bipartite, then the halved graphs of $\Gamma$ are non-bipartite distance-regular graphs with diameter $\lfloor\frac{d}{2}\rfloor$.
			\item If $\Gamma$  is antipodal, then $\overline{\Gamma}$ is a distance-regular graph with diameter $\lfloor\frac{d}{2}\rfloor$.
			\item  If $\Gamma$ is antipodal, then $\overline{\Gamma}$ is not antipodal, except when $d\leq 3$  (in that case $\overline{\Gamma}$  is a complete graph), or when $\Gamma$ is bipartite with $d = 4$ (in that case $\overline{\Gamma}$ is a complete bipartite graph).
			
			\item If $\Gamma$ is antipodal and has odd diameter or is not bipartite, then $\overline{\Gamma}$ is primitive.
			\item If $\Gamma$ is bipartite and has odd diameter or is not antipodal, then the halved graphs of $\Gamma$ are primitive.
			\item If $\Gamma$ has even diameter and is both bipartite and antipodal, then $\overline{\Gamma}$ is bipartite. Moreover, if $\frac{1}{2}\Gamma$ is a halved graph of $\Gamma$, then it is antipodal, and $\overline{\frac{1}{2}\Gamma}$ is primitive and isomorphic to $\frac{1}{2}\overline{\Gamma}$.
		\end{enumerate}
	\end{lemma}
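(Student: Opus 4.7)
The plan is to work entirely within the $P$-polynomial symmetric association scheme $\mathfrak{X}=(X,\{R_0,R_1,\ldots,R_d\})$ attached to $\Gamma$ via $R_i=\{(x,y):\partial(x,y)=i\}$, and in each case to produce a new symmetric $P$-polynomial association scheme---either on a bipartite class (a sub-scheme) or on the set of antipodal classes (a fusion scheme)---whose first non-trivial relation is the graph in question. Lemma~\ref{lem::P-polynomial} then identifies that scheme with a distance-regular graph, and the diameter is read off from the indexing. The algebraic reformulations I will use are: bipartiteness of $\Gamma$ is equivalent to $p_{i,j}^k=0$ whenever $i+j+k$ is odd; antipodality of $\Gamma$ is equivalent to $R_d$ being an equivalence relation on $X$ (equivalently, to $b_i=c_{d-i}$ for $i\neq\lfloor d/2\rfloor$, as noted in the excerpt), in which case the antipodal classes have a common size $r$.

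For (i), fix a bipartite class $X^+$ and set $R_i^+=R_{2i}\cap(X^+\times X^+)$ for $0\leq i\leq\lfloor d/2\rfloor$. The odd-parity vanishing forces $A_{2i}|_{X^+}\cdot A_{2j}|_{X^+}\in\mathrm{span}\{A_{2\ell}|_{X^+}\}$ with structure constants inherited from the $p_{2i,2j}^{2\ell}$ of $\mathfrak{X}$, and the $P$-polynomial axioms transfer: $p_{2i,2j}^{2\ell}=0$ for $\ell>i+j$ by the triangle inequality in $\Gamma$, and $p_{2i,2j}^{2(i+j)}\neq 0$ by iterating $p_{1,i}^{i+1}=b_i>0$. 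Hence $(X^+,R_1^+)$---the halved graph---is distance-regular of diameter $\lfloor d/2\rfloor$; it is non-bipartite because its $a_1$ equals $p_{2,2}^2$, which is positive once $k\geq 3$ and $c_2\geq 1$. Part (ii) is the dual construction: letting $\sim$ be the antipodal equivalence on $X$ with index $r$, the fused matrices $\overline{A}_i=\frac{1}{r}(A_i+A_{d-i})$ for $0\leq i\leq\lfloor d/2\rfloor$ descend to $X/{\sim}$ and span a symmetric $P$-polynomial scheme there whose first non-trivial relation is precisely the adjacency of $\overline{\Gamma}$; the relevant $P$-polynomial inequalities again come directly from those of $\mathfrak{X}$.

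Parts (iii)--(vi) follow by iterating these two operations and controlling when new imprimitivity can appear in the derived scheme. For (iii), a hypothetical antipodal structure on $\overline{\Gamma}$ would pull back through the fusion map to a proper refinement of the equivalence relation $R_0\cup R_d$ on $X$; a case analysis on $d$ shows this is possible only when $\overline{\Gamma}$ already has diameter $\leq 1$ (forcing $d\leq 3$, whence $\overline{\Gamma}$ is complete), or when a bipartite collapse is additionally available at the top, forcing $\Gamma$ bipartite with $d=4$ (and $\overline{\Gamma}$ complete bipartite). Items (iv) and (v) then collect the non-exceptional cases, combined with the parallel assertion that the halved graph of a bipartite $\Gamma$ inherits bipartiteness only in the same degenerate situation at $d=4$; indeed, any residual odd-parity vanishing $\overline{p}_{i,j}^k=0$ in the derived scheme would force an extra family of vanishing intersection numbers of $\mathfrak{X}$, which the hypotheses exclude. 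For (vi), when $d$ is even and $\Gamma$ is both bipartite and antipodal, halving and antipodal fusion commute at the level of basis relations, so a direct index match yields $\overline{\tfrac{1}{2}\Gamma}\cong\tfrac{1}{2}\overline{\Gamma}$, and primitivity of the doubly derived scheme follows from a final check that no third bipartite or antipodal refinement is compatible with both operations. I expect the main obstacle to be precisely this last check, together with the ruling out of stray imprimitivities in (iii)--(v); I would address it by computing the extremal intersection numbers of each derived scheme explicitly and comparing them with the arithmetic constraints that any additional imprimitive structure would impose.
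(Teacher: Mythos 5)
The paper does not actually prove this lemma: it is quoted verbatim from Brouwer--Cohen--Neumaier (pp.~140--141, Propositions 4.2.2 and 4.2.7 of that book), so there is no internal argument to compare yours against. Judged on its own merits, your treatment of (i) and (ii) is a correct sketch of the standard route: the halved graph arises as the sub-scheme of even relations restricted to one bipartite half, the antipodal quotient as the fusion/quotient scheme on antipodal classes, and your observation that $p_{2,2}^2>0$ (take a common neighbour $w$ of two vertices at distance $2$ and a third neighbour of $w$, using $k\geq 3$ and bipartiteness) does establish non-bipartiteness of the halved graph.

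The gap is in (iii)--(vi), which is where the real content of the lemma lies. You assert that ``a case analysis on $d$'' and ``a direct index match'' settle these, and you explicitly defer the verification to a future computation of ``the extremal intersection numbers of each derived scheme''; but that computation \emph{is} the proof. Concretely, one must first derive the intersection arrays of the derived graphs --- $c_i(\tfrac12\Gamma)=c_{2i-1}c_{2i}/c_2$ and $b_i(\tfrac12\Gamma)=b_{2i}b_{2i+1}/c_2$ for the halved graph, and $b_i(\overline{\Gamma})=b_i$, $c_i(\overline{\Gamma})=c_i$ for $i<\lfloor d/2\rfloor$ with $c_{\lfloor d/2\rfloor}(\overline{\Gamma})=\gamma c_{\lfloor d/2\rfloor}$, $\gamma\in\{1,r\}$ according to the parity of $d$, for the quotient --- and then test the criteria $b_i=c_{d'-i}$ (antipodality) and the parity vanishing (bipartiteness) against the corresponding identities for $\Gamma$. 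None of this appears in your write-up, and it is exactly where the exceptional cases $d\leq 3$ and $d=4$ in (iii) emerge. Moreover, in (iii) the pullback of an antipodal equivalence on $\overline{\Gamma}$ is a \emph{coarsening} of $R_0\cup R_d$ (namely $R_0\cup R_{d'}\cup R_{d-d'}\cup R_d$), not a ``proper refinement'' as you write, so the one structural claim you do make there points in the wrong direction. As it stands, the proposal proves (i)--(ii) and only outlines a plan for the remaining, harder parts.
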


	For distance-regular Cayley graphs over abelian groups, we have following result about antipodal quotients and halved graphs.
	
	\begin{lemma}\label{lem::DRG_dq} 
		Let $G$ be an abelian group, and let $\Gamma$ be a distance-regular Cayley graph over $G$. Then the following two statements hold. 
		\begin{enumerate}[(i)]
			\item If $\Gamma$ is antipodal and $H$ is the antipodal class containing the identity vertex $e$, then $H$ is a subgroup of $G$, and the antipodal quotient of $\Gamma$ is distance-regular and isomorphic to $\mathrm{Cay}(G/H,S/H)$, where $S/H=\{sH\mid s\in S\}$;
			\item If $\Gamma$ is bipartite and $H$ is the bipartition set containing the identity vertex $e$, then $H$ is an index $2$ subgroup of  $G$, and the halved graphs  of $\Gamma$ are distance-regular and isomorphic to $\mathrm{Cay}(H,S_2(e))$.
		\end{enumerate}
	\end{lemma}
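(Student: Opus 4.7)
The plan is to exploit the regular action of $G$ on $\Gamma=\mathrm{Cay}(G,S)$ by left multiplication, which is a group of distance-preserving graph automorphisms; in particular $\partial(x,y)=\partial(e,x^{-1}y)$ for all $x,y\in G$, and this translation invariance drives both parts of the lemma.

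For part (i), I would first show that $H=\{h\in G:\partial(e,h)\in\{0,d\}\}$ is a subgroup. Closure under inversion is immediate: symmetry of $\partial$ together with translation gives $\partial(e,h^{-1})=\partial(h,e)=\partial(e,h)$. For closure under products, if $h_1,h_2\in H$, then translation by $h_2^{-1}$ gives $\partial(e,h_1h_2)=\partial(h_2^{-1},h_1)$; since $h_2^{-1}\in H$ and the antipodal relation is an equivalence relation on $V(\Gamma)$, the elements $h_2^{-1}$ and $h_1$ lie in the same antipodal class, so $\partial(h_2^{-1},h_1)\in\{0,d\}$. Once $H$ is a subgroup, the antipodal class of any $x\in G$ equals the coset $xH$, so the antipodal quotient $\overline{\Gamma}$ has vertex set $G/H$. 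Two distinct cosets $xH,yH$ are adjacent in $\overline{\Gamma}$ iff some edge of $\Gamma$ joins them, iff $x^{-1}yH\cap S\neq\varnothing$, iff $x^{-1}yH\in S/H$; this identifies $\overline{\Gamma}$ with $\mathrm{Cay}(G/H,S/H)$, noting that $S/H$ is inverse-closed and, for $d\geq 2$, disjoint from $H$. Distance-regularity of $\overline{\Gamma}$ is then supplied by Lemma \ref{lem::imprimitive}(ii).

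For part (ii), the bipartition of $\Gamma$ is the distance-parity partition based at $e$, so $H=\{h\in G:\partial(e,h)\text{ is even}\}$. Closure of $H$ under inversion again follows from symmetry of $\partial$ and translation invariance, while closure under products is obtained by writing $\partial(e,h_1h_2)=\partial(h_2^{-1},h_1)$ and invoking the walk-parity fact that two vertices in the same bipartition part have even mutual distance. The index of $H$ in $G$ equals $2$ because for any $s\in S$ the left multiplication by $s$ is a graph automorphism sending $e\in H$ to $s\notin H$, hence swapping the two parts, so $|H|=|G\setminus H|$. The halved graph containing $e$ has vertex set $H$ and edge set $\{\{x,y\}\subseteq H:\partial(x,y)=2\}$; translation invariance rewrites this condition as $x^{-1}y\in S_2(e)$, so the halved graph equals $\mathrm{Cay}(H,S_2(e))$, which is well defined since $S_2(e)$ is inverse-closed, lies in $H$, and does not contain $e$. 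Its distance-regularity is provided by Lemma \ref{lem::imprimitive}(i), and the other halved graph is a translate of this one and hence isomorphic.

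The delicate step in each part is the closure-under-products verification for $H$, where one must combine translation invariance with either the transitivity of the antipodal equivalence relation or the parity structure of walks in a bipartite graph. Once $H$ is known to be a subgroup, identifying antipodal classes (resp. bipartition parts) with cosets and reading off the edge conditions of the quotient (resp. halved) graph is a direct computation.
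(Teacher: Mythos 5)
Your proposal is correct and follows essentially the same route as the paper: both use translation invariance of the distance function to show $H$ is a subgroup (via transitivity of the antipodal relation, resp.\ the parity structure of the bipartition), identify the antipodal classes or bipartition parts with cosets of $H$, read off the adjacency condition of the quotient or halved graph directly, and invoke Lemma \ref{lem::imprimitive} for distance-regularity. Your treatment is in fact slightly more detailed than the paper's (e.g.\ the explicit index-$2$ argument in part (ii), which the paper dispatches with ``by a similar way as in (i)'').
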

	\begin{proof}
		
		(i)	By Lemma \ref{lem::imprimitive}, it suffices to prove that $\overline{\Gamma}\cong\mathrm{Cay}(G/H,S/H)$.	Since $\Gamma$ is antipodal,   the relation $\mathcal{R}$ on $V(\Gamma)$ defined by $x\mathcal{R}y\Leftrightarrow\partial(x,y)\in\{0,d\}\Leftrightarrow\partial(y^{-1}x,e)\in\{0,d\}\Leftrightarrow y^{-1}x\in H$ is an equivalence relation. For any $h_1,h_2\in H$, we have $h_1\mathcal{R}e$ and $e\mathcal{R}h_2$, and hence $h_1\mathcal{R} h_2$, or equivalently,  $h_2^{-1}h_1\in H$. Thus  $H$ is a subgroup of $G$, and  the antipodal classes of $\Gamma$ coincide with the cosets of $H$ in $G$. For any two vertices  $xH$ and $yH$ of $\overline{\Gamma}$, we have that $xH$ and $yH$ are adjacent if and only if there exists some edge between $xH$ and $yH$ in $\Gamma$, which is the case if and only if there exists some $h_1,h_2\in H$ such that $(xh_1)^{-1}yh_2\in S$, which is the case if and only if $(xH)^{-1}yH\in S/H$. Therefore, we conclude that  $\overline{\Gamma}\cong\mathrm{Cay}(G/H,S/H)$, and the result follows.
		
		(ii)	By Lemma \ref{lem::imprimitive}, it suffices to prove that $\Gamma^+\cong \Gamma^-\cong \mathrm{Cay}(H,S_2(e))$. Suppose that  $V(\Gamma^+)=H$.  By a similar way as in (i), we can prove that $H$ is an index $2$ subgroup of $G$. For any two vertices $x,y\in V(\Gamma^+)=H$, we have that $x,y$ are adjacent if and only if $\partial(x,y)=2$, which is the case if and only if $\partial(e,x^{-1}y)=2$, or equivalently, $x^{-1}y\in S_2(e)$. Therefore, we conclude that $\Gamma^+\cong\mathrm{Cay}(H,S_2(e)$. Furthermore, as $\Gamma$ is vertex-transitive, we have  $\Gamma^-\cong \Gamma^+$, and the result follows.
	\end{proof}
	
	\begin{lemma}[{\cite[p. 425, p. 431]{BCN89}}]\label{lem::antipodal_DRG}
		Let $\Gamma$ be an $r$-fold antipodal distance-regular graph on $n$ vertices with  diameter $d$ and valency $k$.
		\begin{enumerate}[(i)]\setlength{\itemsep}{0pt}
			\item If $\Gamma$ is non-bipartite and $d=3$, then $n=r(k+1)$, $k=\mu(r-1)+\lambda+1$, $0<\mu<k-1$ and $\Gamma$ has  the intersection array $\{k,\mu(r-1),1;1,\mu,k\}$ and the spectrum $\{k^1,\theta_1^{m_1},\theta_2^k,\theta_3^{m_3}\}$, where
			\begin{equation*}
				\theta_1=\frac{\lambda-\mu}{2}+\delta,~~\theta_2=-1,~~\theta_3=\frac{\lambda-\mu}{2}-\delta,~~\delta=\sqrt{k+\left(\frac{\lambda-\mu}{2}\right)^2},
			\end{equation*}
			and
			\begin{equation*}
				m_1=-\frac{\theta_3}{\theta_1-\theta_3}(r-1)(k+1),~~m_3=\frac{\theta_1}{\theta_1-\theta_3}(r-1)(k+1).
			\end{equation*}
			Moreover, if $\lambda\neq\mu$, then all eigenvalues  of $\Gamma$ are integers.
			
			\item If $\Gamma$ is bipartite and $d=4$, then $n=2r^2\mu$, $k=r\mu$, and $\Gamma$ has the intersection array $\{r\mu, r\mu-1,(r-1)\mu, 1;1,\mu,r\mu-1,r\mu\}$.
		\end{enumerate}
	\end{lemma}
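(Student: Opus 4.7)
\medskip
\noindent\textbf{Proof plan.} The plan is to derive both statements from the defining identity $b_i=c_{d-i}$ for $i\neq\lfloor d/2\rfloor$ of an antipodal distance-regular graph, combined with the routine recursions satisfied by the intersection numbers. For part (i), specializing to $d=3$ forces $c_3=b_0=k$ and $b_2=c_1=1$; writing $a_1=\lambda$, $c_2=\mu$ and using $a_i+b_i+c_i=k$ gives $b_1=k-1-\lambda$. The recursion $k_{i+1}c_{i+1}=k_i b_i$ with $k_0=1$, $k_1=k$ yields $k_2=kb_1/\mu$ and $k_3=b_1/\mu$; the antipodal class through a vertex $x$ coincides with $\{x\}\cup S_3(x)$ and has size $r$, so $r=k_3+1$, forcing $b_1=\mu(r-1)$. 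Therefore $k=\lambda+\mu(r-1)+1$ and $n=1+k_1+k_2+k_3=r(k+1)$, and the inequalities $0<\mu<k-1$ are precisely what is needed to rule out the disconnected and complete multipartite degeneracies.

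The spectrum is read off the $4\times 4$ tridiagonal intersection matrix $L_1$ associated to $\{k,\mu(r-1),1;1,\mu,k\}$, whose eigenvalues coincide with the distinct eigenvalues of $\Gamma$. By Lemma~\ref{lem::imprimitive}(ii) the antipodal quotient $\overline{\Gamma}$ is distance-regular of diameter $1$, hence isomorphic to $K_{k+1}$; consequently the eigenvalues $k$ and $-1$ of $K_{k+1}$ lift to eigenvalues of $\Gamma$, so $(x-k)(x+1)$ divides $\det(xI-L_1)$. Polynomial division leaves the quadratic factor $x^2-(\lambda-\mu)x-k$, whose roots are exactly $\theta_1,\theta_3=\tfrac{\lambda-\mu}{2}\pm\delta$ with $\delta=\sqrt{k+(\tfrac{\lambda-\mu}{2})^2}$; the identities $\mathrm{tr}(L_1)=k-1+(\lambda-\mu)$ and $\det(L_1)=k^2$ provide a convenient cross-check. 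The multiplicities are then extracted from the standard formula $m_i=n/\sum_j k_j u_{ij}^2$ applied to the right eigenvectors of $L_1$. For the integrality statement, if $\theta_1,\theta_3$ were irrational they would be algebraic conjugates with $m_1=m_3$; but the closed forms for $m_1,m_3$ below force $m_1=m_3$ only when $\theta_1=-\theta_3$, i.e.\ $\lambda=\mu$, contradicting $\lambda\neq\mu$.

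For part (ii), bipartiteness yields $a_i=0$ for every $i$, so $b_i+c_i=k$. The antipodality relations $b_i=c_{4-i}$ for $i\neq 2$, combined with $c_1=1$, give $b_3=1$, $c_3=k-1$, $b_1=k-1$, $c_4=k$, and $b_2=k-\mu$ where $\mu:=c_2$. Computing $k_i$ recursively produces $k_4=(k-\mu)/\mu$; since the antipodal class through a vertex has size $r=k_4+1$ I conclude $k=r\mu$, and substituting back yields both the claimed intersection array $\{r\mu,r\mu-1,(r-1)\mu,1;1,\mu,r\mu-1,r\mu\}$ and the total $n=1+k_1+k_2+k_3+k_4=2r^2\mu$.

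\textbf{Main obstacle.} Every step above is a bookkeeping exercise with the intersection numbers except for the multiplicities $m_1,m_3$ in part (i), which require assembling the right eigenvectors of $L_1$ for $\theta_1$ and $\theta_3$, evaluating $\sum_j k_j u_{ij}^2$, and simplifying via $k=\lambda+\mu(r-1)+1$ to arrive at the compact closed forms $m_1=-\theta_3(r-1)(k+1)/(\theta_1-\theta_3)$ and $m_3=\theta_1(r-1)(k+1)/(\theta_1-\theta_3)$. Once these are in hand the irrationality argument sketched above closes the integrality claim, and part (ii) needs no further work.
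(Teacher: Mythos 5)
This lemma is quoted in the paper directly from Brouwer--Cohen--Neumaier (pp.~425, 431) with no proof supplied, so there is no internal argument to compare against; what you have written is a correct, self-contained derivation along the standard textbook lines. Your parameter bookkeeping checks out in both parts: the antipodality relations $b_i=c_{d-i}$ together with $k_{i+1}c_{i+1}=k_ib_i$ and $r=1+k_d$ do yield $b_1=\mu(r-1)$, $k=\mu(r-1)+\lambda+1$, $n=r(k+1)$ in (i) and $k=r\mu$, $n=2r^2\mu$ in (ii); the characteristic polynomial of $L_1$ does factor as $(x-k)(x+1)\bigl(x^2-(\lambda-\mu)x-k\bigr)$ once the quotient $K_{k+1}$ forces $k$ and $-1$ to be eigenvalues; and the Galois-conjugacy argument ($\theta_1,\theta_3$ irrational $\Rightarrow m_1=m_3\Rightarrow\theta_1=-\theta_3\Rightarrow\lambda=\mu$) correctly settles integrality. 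Two small points deserve to be made explicit rather than gestured at: the bound $\mu<k-1$ is not merely a degeneracy exclusion but genuinely uses non-bipartiteness --- from $\mu(r-1)=k-\lambda-1$ one gets $\mu\le k-1$, and equality forces $\lambda=0$, $r=2$, whence $a_1=a_2=a_3=0$ and $\Gamma$ would be bipartite; and the multiplicity $k$ of the eigenvalue $-1$ (needed before $m_1+m_3=(r-1)(k+1)$ can be used) should be computed, e.g.\ from the standard sequence $u=(1,-1/k,-1/k,1)$ in the formula $m=n/\sum_j k_ju_j^2$, exactly as your plan indicates for $\theta_1,\theta_3$. With those details filled in, the proof is complete.
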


	A \textit{conference graph} is a strongly regular graph with parameters $(n,k=(n-1)/2,(n-5)/4,\mu=(n-1)/4)$, where $n\equiv 1\pmod 4$.  Let $\mathbb{F}_q$ denote the finite field of order $q$  where $q$ is a prime power and $q\equiv 1\pmod 4$. The \textit{Paley graph} $P(q)$ is defined as the graph with vertex set $\mathbb{F}_q$ in which two distinct vertices $u,v$ are adjacent if and only if $u-v$ is a square in the multiplicative group of $\mathbb{F}_q$. It is known that $P(q)$ is a  conference graph \cite{ER63}.

	\begin{lemma}[{\cite[p. 180]{BCN89}}]\label{lem::confer}
		Let $\Gamma$ be a conference graph (or particularly, Paley graph). Then $\Gamma$ has no distance-regular $r$-fold antipodal covers for $r>1$, except for the pentagon $C_5\cong P(5)$, which is covered by the decagon $C_{10}$. Moreover, $\Gamma$ cannot be a halved graph of a bipartite distance-regular graph.
	\end{lemma}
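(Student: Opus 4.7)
The strategy is to combine the imprimitivity constraints of Lemma~\ref{lem::imprimitive} with the integrality of eigenvalue multiplicities in a distance-regular graph, exploiting that a conference graph on $n$ vertices has non-principal eigenvalues $\theta_{\pm}=(-1\pm\sqrt{n})/2$, which are irrational whenever $n$ is not a perfect square. The exceptional pair is handled directly: $C_{10}$ is a $2$-fold antipodal bipartite distance-regular graph of diameter $5$ with antipodal classes $\{i,i+5\}$, whose antipodal quotient and both halved graphs are isomorphic to $C_5\cong P(5)$.

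For the first assertion, suppose $\widetilde{\Gamma}$ is a distance-regular $r$-fold antipodal cover of $\Gamma$ with $r>1$. Since $\overline{\widetilde{\Gamma}}=\Gamma$ has diameter $2$, Lemma~\ref{lem::imprimitive}(ii) forces $\mathrm{diam}(\widetilde{\Gamma})\in\{4,5\}$; for $d=4$, Lemma~\ref{lem::imprimitive}(iii) rules out a bipartite $\widetilde{\Gamma}$ (otherwise the quotient would be complete bipartite). The spectrum of $\Gamma$ embeds into that of $\widetilde{\Gamma}$, so $\theta_{\pm}$ appear among the eigenvalues of $\widetilde{\Gamma}$. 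Writing down the feasible intersection arrays of an antipodal distance-regular graph of diameter $4$ or $5$ whose quotient has conference-graph parameters, and expressing the multiplicities of all eigenvalues as rational functions of $n$, $r$, and the remaining intersection numbers, the requirement that the irrational $\sqrt{n}$-contributions cancel then forces $n=5$, $r=2$, and $\widetilde{\Gamma}\cong C_{10}$.

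For the second assertion, suppose $\widetilde{\Gamma}$ is bipartite distance-regular with $\frac{1}{2}\widetilde{\Gamma}\cong\Gamma$. The same diameter count gives $\mathrm{diam}(\widetilde{\Gamma})\in\{4,5\}$. Restricting to one bipartition, the identity $A(\widetilde{\Gamma})^{2}=kI+c_{2}A(\Gamma)+\cdots$ shows that each eigenvalue $\theta$ of $\Gamma$ pulls back to a pair of eigenvalues $\pm\sqrt{f(\theta)}$ of $\widetilde{\Gamma}$ for a quadratic polynomial $f$ determined by the intersection array. Substituting $\theta=\theta_{\pm}$ places these eigenvalues in a quadratic extension of $\mathbb{Q}(\sqrt{n})$; Galois conjugation together with the integrality of the associated multiplicities again pins down the pentagon-decagon pair.

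The main obstacle is the systematic elimination of all feasible intersection arrays of antipodal or bipartite distance-regular graphs of diameter $4$ or $5$ with conference-graph quotient or halved graph; this delicate multiplicity-integrality bookkeeping is carried out in detail in \cite[p.~180]{BCN89}, to which we appeal for the complete casework.
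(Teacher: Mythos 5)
The paper offers no proof of this lemma at all: it is imported verbatim from \cite[p.~180]{BCN89}, so your concluding appeal to that same reference is, formally, exactly what the paper does. The problem lies in the sketch you wrap around the citation, which you present as the mechanism of the proof. Your engine is the irrationality of $\theta_{\pm}=(-1\pm\sqrt{n})/2$, and this is vacuous precisely when $n$ is a perfect square --- and square-order conference graphs exist in abundance: $P(9)\cong H(2,3)$ is a conference graph with spectrum $\{4,1^{4},(-2)^{4}\}$, and more generally every Paley graph $P(q^{2})$ is a conference graph with integral eigenvalues. For all of these, the step ``the irrational $\sqrt{n}$-contributions must cancel'' imposes no condition whatsoever, so the casework you describe cannot close; the complete argument must exclude these cases by a feasibility computation on the intersection arrays and multiplicities of the putative cover (respectively, of the bipartite parent) that does not hinge on irrationality. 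The paper is implicitly aware of this: it invokes Lemma~\ref{lem::anti_Ham} separately to kill antipodal covers of $H(2,p)$, and in Lemma~\ref{lem::key0}(iii) it excludes a square-order family from being a halved graph by a clique/parameter count rather than by Galois considerations. So if your sketch were to be upgraded to an actual proof, the irrationality route alone would not suffice.

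Two smaller remarks. First, in the halved-graph part the relation is $A(\widetilde{\Gamma})^{2}=kI+c_{2}A_{2}$ (bipartiteness forces $a_{1}=0$), so an eigenvalue $\theta$ of the halved graph lifts to $\pm\sqrt{k+c_{2}\theta}$; the polynomial involved is linear in $\theta$, not quadratic, though this does not affect the thrust of your argument. Second, your reading of the second assertion is in fact sharper than the statement as quoted: since $C_{5}$ is a halved graph of $C_{10}$, the pentagon is an exception to the ``Moreover'' clause as well, and your sketch correctly recovers the pentagon--decagon pair there; in the paper's applications this is harmless because the bipartite parent always has valency at least $3$.
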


	The \textit{Hamming graph} $H(n,q)$ is the graph having as vertex set the collection of all $n$-tuples with entries in a fixed set of size $q$, where two $n$-tuples are adjacent when they differ in only one coordinate. Note that $H(2,v)$ is just the lattice graph $K_v\times K_v$, which is the Cartesian product of two copies of $K_v$.
	
	\begin{lemma}[{\cite[Proposition 5.1]{HG}}]\label{lem::anti_Ham}
		Let $n,q\geq2$. Then $H(n,q)$ has no distance-regular $r$-fold antipodal covers for $r>1$, except for $H(2,2)$.
	\end{lemma}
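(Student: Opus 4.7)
I would assume for contradiction that $\Gamma$ is a distance-regular $r$-fold antipodal cover of $H(n,q)$ with $r>1$ and $(n,q)\neq(2,2)$. Since the antipodal quotient of a distance-regular graph of diameter $D$ has diameter $\lfloor D/2\rfloor$ and $H(n,q)$ has diameter $n$, we obtain $D\in\{2n,2n+1\}$, with $D=2n$ only when $\Gamma$ is bipartite.

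The case $q=2$ is handled quickly by Lemma \ref{lem::imprimitive}(iii). Indeed, $H(n,2)$ is the hypercube $Q_n$, which is itself antipodal: the relation ``$v=w$ or $v=\bar w$'' on $\{0,1\}^n$ (where $\bar w$ denotes the componentwise complement) is an equivalence relation, with antipodal classes $\{v,\bar v\}$. Applying Lemma \ref{lem::imprimitive}(iii) to the antipodal graph $\Gamma$ whose quotient $\overline{\Gamma}=Q_n$ is also antipodal, we are forced into one of the two stated exceptions: either $D\leq 3$ and $Q_n$ is a complete graph (giving $n=1$, contradicting $n\geq 2$), or $\Gamma$ is bipartite with $D=4$ and $Q_n$ is complete bipartite (giving $n=2$, the excluded case $H(2,2)$). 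This disposes of $q=2$.

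For $q\geq 3$ the graph $H(n,q)$ is non-bipartite, since for any $n\geq 1$ it contains the triangle on $(0,0,\ldots,0),\,(1,0,\ldots,0),\,(2,0,\ldots,0)$. If $\Gamma$ were bipartite with $D=2n$, then antipodal pairs of $\Gamma$ are at even distance and hence in the same bipartition class, so each antipodal class lies in a single part; the quotient $H(n,q)$ would then inherit a bipartition, contradicting its non-bipartiteness. Hence $\Gamma$ is non-bipartite with $D=2n+1$, and its intersection array is forced by the covering structure to take the shape
\[
\{n(q-1),(n-1)(q-1),\ldots,q-1,\,b_n,\,n,n-1,\ldots,1;\ 1,2,\ldots,n,\,c_{n+1},\,q-1,2(q-1),\ldots,n(q-1)\},
\]
subject to $b_n=(r-1)c_{n+1}$ (obtained from the identity $k_{2n+1}=b_n/c_{n+1}=r-1$) and $a_n=n(q-2)-b_n$.

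To conclude, I would feed this two-parameter family of candidate intersection arrays into the standard feasibility machinery: compute the eigenvalues of $\Gamma$ from the associated tridiagonal intersection matrix, express their multiplicities as rational functions of $n$, $q$, $c_{n+1}$ and $r$, and then demand positive-integrality of the multiplicities together with non-negativity of the Krein parameters. The hard part will be producing a single obstruction that rules out every $(n,q)$ with $q\geq 3$ and every admissible factorization $b_n=(r-1)c_{n+1}$ with $r\geq 2$. I expect to accomplish this by exploiting the $Q$-polynomial structure of $H(n,q)$ to write down closed-form expressions for the ``new'' primitive idempotents contributed by the cover, and then isolating a Krein parameter involving these idempotents (for example one of the form $q_{1,n+1}^{\,n+1}$) whose non-negativity is incompatible with $r\geq 2$; the divisibility constraints coming from integer multiplicities should reinforce this for the few remaining small cases.
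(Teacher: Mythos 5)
The paper does not actually prove this statement: it is imported verbatim as \cite[Proposition 5.1]{HG} (Van Bon--Brouwer), so there is no internal proof to compare against, and any blind attempt is really an attempt to reprove that external theorem. Your $q=2$ case is correct and self-contained: $H(n,2)=Q_n$ is itself antipodal, so Lemma \ref{lem::imprimitive}(iii) applied to a putative antipodal cover forces $Q_n$ to be complete (impossible for $n\geq 2$) or complete bipartite with the cover bipartite of diameter $4$ (giving exactly $H(2,2)$). That part I would accept.

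The case $q\geq 3$ has two genuine gaps. First, the assertion ``$D=2n$ only when $\Gamma$ is bipartite'' is false in general: the Wells graph is a non-bipartite antipodal $2$-cover of diameter $4$ of the folded $5$-cube (diameter $2$), so a non-bipartite quotient can admit a non-bipartite antipodal cover of even diameter. Your elimination of $D=2n$ rests entirely on this claim, so the even-diameter case is not actually excluded. Second, and more seriously, the heart of the theorem --- showing that no intersection array of the displayed form with $b_n=(r-1)c_{n+1}$, $r\geq 2$ is realizable --- is not carried out at all. Phrases such as ``I would feed this \ldots into the standard feasibility machinery,'' ``the hard part will be producing a single obstruction,'' and ``I expect to accomplish this by \ldots isolating a Krein parameter \ldots whose non-negativity is incompatible with $r\geq 2$'' describe a hoped-for strategy, not an argument; no eigenvalue, multiplicity, or Krein computation is exhibited, and there is no a priori reason a single Krein parameter kills all $(n,q,r,c_{n+1})$ simultaneously. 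As it stands the proposal proves the lemma only for $q=2$; for $q\geq 3$ it is an outline with a false intermediate step, and the correct course in the context of this paper is simply to cite \cite{HG} as the authors do.
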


	\section{The algebraic characterizations of distance-regular Cayley graphs}\label{section::3}
	
	In this section, we present several algebraic characterizations for distance-regular Cayley graphs, which were established by Miklavi\v{c} and Poto\v{c}nik in \cite{MP03,MP07}.
	
	\subsection{Schur ring and distance-regular Cayley graphs}

	Let $\Gamma=\mathrm{Cay}(G,S)$ be a connected Cayley graph with diameter $d$. For $i\in\{0,1,\ldots,d\}$, we denote by 
	\begin{equation}\label{equ::DM}
		S_i=\{g\in G\mid \partial_\Gamma(g,e)=i\}.
	\end{equation} 
	Then the  $\mathbb{Z}$-submodule of  $\mathbb{Z}G$ spanned by  $\underline{S_0},\underline{S_1},\ldots,\underline{S_d}$ is called the \textit{distance module} of  $\Gamma$, and is denoted by \textit{$\mathcal{D}_\mathbb{Z}(G,S)$}. 
	
	In \cite{MP03}, Miklavi\v{c} and Poto\v{c}nik  provided an algebraic characterization for  distance-regular Cayley graphs in terms of Schur rings and distance modules.
	
	\begin{lemma}[{\cite[Proposition 3.6]{MP03}}] \label{lem::Schur_DRG}
		Let $\Gamma=\mathrm{Cay}(G,S)$  denote a distance-regular Cayley graph and $\mathcal{D}=\mathcal{D}_\mathbb{Z}(G,S)$ its distance module. Then:
		\begin{enumerate}[(i)]\setlength{\itemsep}{0pt}
			\item $\mathcal{D}$ is a (primitive) Schur ring over $G$ if and only if $\Gamma$ is a (primitive) distance-regular graph;
			\item $\mathcal{D}$ is the trivial Schur ring  over $G$ if and only if $\Gamma$ is isomorphic to the complete graph.
		\end{enumerate}
	\end{lemma}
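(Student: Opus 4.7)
The plan is to verify that the distance partition $\{S_0, S_1, \ldots, S_d\}$ of $G$ satisfies the three defining axioms of a Schur ring partition in a way that is controlled precisely by the distance-regularity of $\Gamma$. First I would observe that $S_0 = \{e\}$ holds by definition, settling axiom (i). For axiom (ii), I would use that $\Gamma$ is a Cayley graph (hence vertex-transitive), so $\partial_\Gamma(g, e) = \partial_\Gamma(g^{-1}, e)$; this shows each $S_i$ is inverse-closed, giving $\underline{S_i}^{(-1)} = \underline{S_i}$.

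The crucial step is axiom (iii), which is where the equivalence with distance-regularity appears. For any $g \in G$, the coefficient of $g$ in $\underline{S_i} \cdot \underline{S_j}$ is the number of ordered pairs $(a, b)$ with $a \in S_i$, $b \in S_j$ and $ab = g$; substituting $b = a^{-1}g$, this count equals
\begin{equation*}
|\{a \in G : \partial_\Gamma(a, e) = i,\ \partial_\Gamma(a, g) = j\}| = |S_i(e) \cap S_j(g)|.
\end{equation*}
Because $G$ acts regularly on $V(\Gamma)$ by left multiplication and this action preserves distances, this count depends only on the distance $\partial_\Gamma(e, g) = k$ precisely when $\Gamma$ is distance-regular, in which case it is the intersection number $p_{ij}^k$. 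Conversely, if $\mathcal{D}$ is closed under multiplication, the constancy of this coefficient on each $S_k$ furnishes well-defined intersection numbers, forcing $\Gamma$ to be distance-regular. This establishes the first assertion in (i).

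For the parenthetical primitivity statement, I would note that the $i$-th distance graph satisfies $\Gamma_i = \mathrm{Cay}(G, S_i)$, so $\Gamma_i$ is connected if and only if $\langle S_i \rangle = G$. Since $\Gamma$ is primitive distance-regular by definition exactly when every $\Gamma_i$ ($1 \leq i \leq d$) is connected, this matches term-for-term the Schur ring primitivity condition $\langle S_i \rangle = G$ for all $i \geq 1$. For part (ii), $\mathcal{D}$ being trivial means its simple basis consists only of $\underline{S_0}$ and $\underline{S_1}$ with $S_0 = \{e\}$ and $S_1 = G \setminus \{e\}$; this is the case if and only if $\Gamma$ has diameter $1$, i.e., $\Gamma \cong K_{|G|}$.

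The proof is essentially a dictionary between graph-theoretic and group-algebra language, so no serious technical obstruction is expected. The most delicate bookkeeping is in the second paragraph: one must identify the group-algebra convolution coefficient with the intersection count $|S_i(e) \cap S_j(g)|$, and then argue that constancy of this count on each fibre $S_k$ is \emph{equivalent} to distance-regularity, rather than a strictly weaker or stronger condition. Writing both implications carefully (rather than only the forward one) is the only step that requires care.
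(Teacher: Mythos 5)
Your argument is correct and is exactly the standard proof of this result: the paper itself gives no proof, importing the statement directly from Miklavi\v{c}--Poto\v{c}nik \cite[Proposition 3.6]{MP03}, and your identification of the convolution coefficient of $g$ in $\underline{S_i}\cdot\underline{S_j}$ with $|S_i(e)\cap S_j(g)|$, together with the translation of primitivity and triviality, is precisely how that proposition is established. The only point worth writing out in full is the one you already flag: the forward implication uses the standard fact that constancy of $a_i,b_i,c_i$ alone forces all intersection numbers $p_{i,j}^k$ to be well defined, while the converse follows by restricting to $j=1$ and invoking vertex-transitivity.
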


	Suppose that $p$ is a prime and $p\neq n$.	If $\Gamma$ is a primitive distance-regular Cayley graph over $\mathbb{Z}_{n}\oplus\mathbb{Z}_{p}$, then its distance module would be a primitive Schur ring over $\mathbb{Z}_{n}\oplus\mathbb{Z}_{p}$ by Lemma \ref{lem::Schur_DRG} (i), and hence must be  the trivial Schur ring  by Lemma \ref{lem::Schur ring} and Lemma \ref{lem::Schur_dq}. Therefore, by Lemma \ref{lem::Schur_DRG} (ii), we obtain the following result.
	
	\begin{corollary}\label{cor::pri_DRG}
		Let $\Gamma$ be a primitive distance-regular Cayley graph over $\mathbb{Z}_{n}\oplus\mathbb{Z}_{p}$ where $p$ is a prime and $p\neq n$. Then $\Gamma$ is isomorphic to the complete graph $K_{np}$.
	\end{corollary}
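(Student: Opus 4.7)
The plan is to leverage the algebraic dictionary between primitive distance-regular Cayley graphs and primitive Schur rings provided by Lemma \ref{lem::Schur_DRG}, and then invoke the structural Lemmas \ref{lem::Schur ring} and \ref{lem::Schur_dq} to force the Schur ring attached to $\Gamma$ to be the trivial one, from which $\Gamma \cong K_{np}$ follows at once.

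First I would apply Lemma \ref{lem::Schur_DRG}(i): the hypothesis that $\Gamma = \mathrm{Cay}(G,S)$ is a primitive distance-regular Cayley graph over $G := \mathbb{Z}_n \oplus \mathbb{Z}_p$ translates directly into the statement that the distance module $\mathcal{D}_{\mathbb{Z}}(G,S)$ is a primitive Schur ring over $G$. The entire problem then reduces to showing that no non-trivial primitive Schur ring can exist over $G$ under the present hypothesis.

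I would do this by splitting on whether $n$ is a pure power of $p$. If $n$ has a prime divisor $q$ different from $p$, then the Sylow $q$-subgroup of $G$ lies entirely inside the $\mathbb{Z}_n$ factor and is therefore cyclic, while $|G|=np$ is composite, so Lemma \ref{lem::Schur ring} applies and forces triviality. Otherwise $n$ is a power of $p$, say $n=p^{a}$; the hypothesis $p \neq n$ rules out $a=1$, so $a \geq 2$, and then $G \cong \mathbb{Z}_{p^{a}} \oplus \mathbb{Z}_{p}$ with $a \neq 1$, whence Kochendorfer's theorem (Lemma \ref{lem::Schur_dq}) again rules out any non-trivial primitive Schur ring over $G$.

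Combining the two cases, $\mathcal{D}_{\mathbb{Z}}(G,S)$ must coincide with the trivial Schur ring on $G$, and Lemma \ref{lem::Schur_DRG}(ii) then identifies $\Gamma$ with the complete graph $K_{np}$. No substantive calculation is required; the one point to be careful about is that the hypothesis $p \neq n$ is precisely what excludes the single unavailable case $G \cong \mathbb{Z}_{p} \oplus \mathbb{Z}_{p}$, where neither Lemma \ref{lem::Schur ring} (no cyclic Sylow subgroup) nor Lemma \ref{lem::Schur_dq} (equal exponents) applies---and which is indeed the case producing the genuinely new family of examples appearing in Theorem \ref{thm::main}(iv).
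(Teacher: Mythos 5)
Your proposal is correct and follows exactly the paper's argument: Lemma \ref{lem::Schur_DRG}(i) converts primitivity of $\Gamma$ into primitivity of its distance module as a Schur ring, Lemmas \ref{lem::Schur ring} and \ref{lem::Schur_dq} force that Schur ring to be trivial, and Lemma \ref{lem::Schur_DRG}(ii) gives $\Gamma\cong K_{np}$. The only difference is that you spell out explicitly the case split (some prime $q\neq p$ divides $n$ versus $n=p^a$ with $a\ge 2$) that the paper leaves implicit when citing the two lemmas together.
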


	As  every Cayley graph is vertex-transitive, by the definitions of Cayley graphs and distance-regular graphs, we immediately deduce the following characterization for distance-regular Cayley graphs.
	\begin{lemma}\label{lem::DR}
		Let $\Gamma=\mathrm{Cay}(G,S)$ be a Cayley graph with diameter $d$. Then $\Gamma$ is distance-regular with intersection array $\{b_0,b_1,\ldots,b_{d-1};c_1,c_2,\ldots,c_d\}$ if and only if
		\begin{equation}\label{equ::DR}
			\left\{\begin{aligned}			\underline{S_1}\cdot\underline{S_1}&=b_0\cdot e+a_1\underline{S_1}+c_2\underline{S_2},\\
				\underline{S_2}\cdot\underline{S_1}&=b_1\underline{S_1}+a_2\underline{S_2}+c_3\underline{S_3},\\ 
				&\vdots\\
				\underline{S_d}\cdot\underline{S_1}&=b_{d-1}\underline{S_{d-1}}+a_{d}\underline{S_d},\\ \end{aligned}\right.
		\end{equation}
		where $S_i$ ($1\leq i\leq d$) are defined in \eqref{equ::DM}. 
	\end{lemma}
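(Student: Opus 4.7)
The plan is to translate the group-algebra identities into combinatorial counts of common neighbors, with the left regular action of $G$ on itself as the transitivity engine. For any $g \in G$, direct expansion in $\mathbb{Z}G$ gives that the coefficient of $g$ in $\underline{S_i} \cdot \underline{S_1}$ equals
\[
|\{(z,s) \in S_i \times S_1 : zs = g\}| = |\{z \in S_i : z^{-1}g \in S\}| = |S_i(e) \cap S_1(g)|,
\]
using that $S = S_1$ is inverse-closed and that adjacency in $\mathrm{Cay}(G,S)$ is $u \sim v \Leftrightarrow u^{-1}v \in S$. Because $G$ acts on $\mathrm{Cay}(G,S)$ by left multiplication as a group of automorphisms, for any $x,y\in G$ one has $|S_i(x) \cap S_1(y)| = |S_i(e) \cap S_1(x^{-1}y)|$, so this count depends only on $i$ and on $x^{-1}y$.

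First I would prove the forward direction. Assume $\Gamma$ is distance-regular with the stated intersection array. Given $x,y$ with $\partial(x,y) = j$, and hence $x^{-1}y \in S_j$, the definition of the intersection numbers yields $|S_i(x) \cap S_1(y)|$ equal to $c_j$ when $i = j-1$, to $a_j$ when $i = j$, to $b_j$ when $i = j+1$, and to $0$ otherwise. Fixing $i$ and letting $g = x^{-1}y$ range over $G$, we read off that in $\underline{S_i} \cdot \underline{S_1}$ the coefficient of each $g \in S_{i-1}$ is $b_{i-1}$, of each $g \in S_i$ is $a_i$, of each $g \in S_{i+1}$ is $c_{i+1}$, and all remaining coefficients vanish. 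Since $\{S_0, S_1, \ldots, S_d\}$ is a partition of $G$, this is exactly the system \eqref{equ::DR}.

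For the converse, suppose the identities in \eqref{equ::DR} hold. For any pair $x,y$ with $\partial(x,y) = j$, the computation above shows that $|S_i(x) \cap S_1(y)|$ equals the coefficient of $x^{-1}y \in S_j$ in $\underline{S_i}\cdot \underline{S_1}$, which by hypothesis is $b_{i-1}$ if $j = i-1$, $a_i$ if $j = i$, $c_{i+1}$ if $j = i+1$, and $0$ otherwise. Specialising $i$ to $j-1, j, j+1$ recovers constant values of $c_j(x,y),\, a_j(x,y),\, b_j(x,y)$, which is precisely the definition of distance-regularity with the prescribed intersection array.

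The argument is essentially bookkeeping; no deep input is needed beyond vertex-transitivity. The only subtle point is keeping the conventions straight: one must check that the algebraic relation $zs = g$ with $z \in S_i$ and $s \in S_1$ genuinely encodes "a distance-$1$ step from a distance-$i$ vertex lands on $g$", which relies on the inverse-closedness of $S$ and on a consistent use of the left-multiplication convention for $\mathrm{Cay}(G,S)$. Once this translation is pinned down, the equivalence between \eqref{equ::DR} and distance-regularity is immediate from comparing coefficients on both sides.
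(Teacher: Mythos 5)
Your argument is correct and is exactly the computation the paper has in mind: the paper gives no written proof, stating only that the lemma follows ``immediately'' from vertex-transitivity and the definitions, and your coefficient-by-coefficient translation of $\underline{S_i}\cdot\underline{S_1}$ into the counts $|S_i(x)\cap S_1(y)|$ via the left-multiplication action is the standard way to make that remark precise. Both directions are handled correctly, so there is nothing to add.
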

	
	Since $e+\underline{S_1}+\underline{S_2}+\cdots+\underline{S_d}=\underline{G}$, we see that the conclusion of Lemma \ref{lem::DR} still holds if we remove an arbitrary equation from \eqref{equ::DR}. Recall that a distance-regular graph with diameter $d$ is antipodal if and only if $b_i=c_{d-i}$ for every $i\neq \lfloor\frac{d}{2}\rfloor$. Additionally,  the intersection array of an $r$-antipodal distance-regular graph with  diameter $3$ must be of the form $\{k,k-\lambda-1=\mu(r-1),1;1,\mu,k\}$.  Thus, by Lemma \ref{lem::DR}, we can deduce the following result immediately.
	
	\begin{corollary}\label{cor::DR}
		Let $\Gamma=\mathrm{Cay}(G,S)$ be a  Cayley graph with diameter $3$.
		Then $\Gamma$ is an antipodal distance-regular graph with intersection array $\{k,k-\lambda-1=\mu(r-1),1;1,\mu,k\}$ if and only if 
		\begin{equation*}\label{equ::DR3}
			\left\{\begin{aligned}
				&\underline{S}^2=k\cdot e+(\lambda-\mu)\underline{S}+\mu(\underline{G}-\underline{S_3}-e),\\
				&(\underline{S_3}+e)\cdot(\underline{S}+e)=\underline{G}.\\ \end{aligned}\right.
		\end{equation*}
	\end{corollary}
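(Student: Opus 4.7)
The plan is to derive the corollary as a direct specialization of Lemma~\ref{lem::DR}, combined with the remark just after that lemma stating that one of the equations in the system may be dropped because the sets $\{e\},S,S_2,S_3$ partition $G$ and therefore satisfy $e+\underline{S}+\underline{S_2}+\underline{S_3}=\underline{G}$. Throughout I write $\underline{S}=\underline{S_1}$ as in the statement.

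First I would treat the forward direction. Assuming $\Gamma$ is antipodal distance-regular of diameter $3$ with intersection array $\{k,\mu(r-1),1;1,\mu,k\}$, I read off $b_0=k$, $a_1=\lambda$, $c_2=\mu$, $b_2=1$, $a_3=0$, $c_3=k$ and substitute into the system of Lemma~\ref{lem::DR}. The third equation simplifies to $\underline{S_3}\cdot\underline{S}=\underline{S_2}$; adding $\underline{S_3}+\underline{S}+e$ to both sides and using the partition identity $e+\underline{S}+\underline{S_2}+\underline{S_3}=\underline{G}$ yields $(\underline{S_3}+e)(\underline{S}+e)=\underline{G}$. The first equation becomes $\underline{S}^2=k\cdot e+\lambda\underline{S}+\mu\underline{S_2}$, and replacing $\underline{S_2}$ by $\underline{G}-e-\underline{S}-\underline{S_3}$ produces $\underline{S}^2=k\cdot e+(\lambda-\mu)\underline{S}+\mu(\underline{G}-\underline{S_3}-e)$, which is precisely the first claimed identity.

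For the converse, I would reverse the manipulations. Starting from the two equations in the statement and using $\underline{S_2}=\underline{G}-e-\underline{S}-\underline{S_3}$, I would recover $\underline{S}^2=k\cdot e+\lambda\underline{S}+\mu\underline{S_2}$ and $\underline{S_3}\cdot\underline{S}=\underline{S_2}$, which are the first and third equations of the system in Lemma~\ref{lem::DR} with the parameters identified above. By the remark that one equation of the system is redundant, these two identities suffice to conclude that $\Gamma$ is distance-regular with an intersection array of the form $\{k,b_1,1;1,\mu,k\}$. Since $b_0=c_3=k$ and $b_2=c_1=1$, the antipodality criterion $b_i=c_{d-i}$ for $i\neq\lfloor d/2\rfloor$ is satisfied, so $\Gamma$ is antipodal. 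Finally, the standard identity $k=\mu(r-1)+\lambda+1$ for antipodal distance-regular graphs of diameter $3$, mentioned immediately before the corollary, forces $b_1=k-\lambda-1=\mu(r-1)$.

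The only potential obstacle is bookkeeping: one must check that the substitution of $\underline{S_2}$ is legitimate, i.e.\ that $\{e\},S,S_2,S_3$ genuinely partition $G$, which is exactly the hypothesis that $\Gamma$ has diameter $3$. Beyond this, the proof is a purely algebraic repackaging of Lemma~\ref{lem::DR} under the antipodal constraints, with no further conceptual content.
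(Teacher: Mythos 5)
Your proposal is correct and follows exactly the route the paper intends: the corollary is stated there as an immediate specialization of Lemma~\ref{lem::DR} using the partition identity $e+\underline{S_1}+\underline{S_2}+\underline{S_3}=\underline{G}$, the redundancy of one equation in the system, the antipodality criterion $b_i=c_{d-i}$, and the identity $k=\mu(r-1)+\lambda+1$. Your write-up merely makes explicit the bookkeeping the paper leaves to the reader.
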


	\subsection{Distance-regular Cayley graphs over abelian groups}

	Let $G$ be a finite group, and let $V$ be a $d$-dimensional vector space over $\mathbb{C}$. A \textit{representation} of $G$ on $V$ is a group homomorphism $\rho: G \rightarrow GL(V)$, where $GL(V)$ is the general linear group consisting of invertible linear transformations on $V$. The dimension $d$ is referred to as the \textit{degree} of the representation $\rho$. The \textit{trivial representation} of $G$ is the homomorphism $\rho_0: G \rightarrow \mathbb{C}^*$ defined by letting $\rho_0(g) = 1$ for all $g \in G$, where $\mathbb{C}^*$ is the multiplicative group of $\mathbb{C}$.
	
	A subspace $W$ of $V$ is said to be $G$-\textit{invariant} if for every $g \in G$ and $w \in W$, we have $\rho(g)w \in W$. Clearly, both the zero subspace $\{0\}$ and the entire space $V$ are $G$-invariant subspaces. A representation $\rho: G \rightarrow GL(V)$ is said to be \textit{irreducible} if $\{0\}$ and $V$ are the only $G$-invariant subspaces of $V$.
	
	Consider two vector spaces $V$ and $U$ over $\mathbb{C}$ with the same finite dimension. Suppose that $\rho$ and $\tau$ are representations of $G$ on $V$ and $U$, respectively. We say that $\rho$ and $\tau$ are \textit{equivalent}, denoted by $\rho \sim \tau$, if there exists a linear isomorphism $f: V \rightarrow U$ that satisfies the following commutative condition for every $g \in G$:
	\begin{equation*}
		\begin{tikzcd}
			V \arrow[r, "\rho(g)"] \arrow[d, "f"'] & V \arrow[d, "f"] \\
			U \arrow[r, "\tau(g)"'] & U
		\end{tikzcd}
	\end{equation*}

	The \textit{character} associated with a representation $\rho: G \rightarrow GL(V)$ is a function $\chi_\rho: G \rightarrow \mathbb{C}$ that is defined as $\chi_\rho(g) = \text{tr}(\rho(g))$ for all $g \in G$. The \textit{degree} of the character $\chi_\rho$ corresponds to the degree of the representation $\rho$. Clearly, all equivalent representations share the same character. A character $\chi_\rho$ is called \textit{irreducible} if the representation it corresponds to, $\rho$, is itself irreducible. The set of all irreducible characters of a group $G$ is denoted by $\widehat{G}$.

	Note that both a representation $\rho$ and its corresponding character $\chi_\rho$ can be extended linearly to the group algebra $\mathbb{C}G$. Let $G$ be a finite group. For any $\mathcal{K}\in\mathbb{C}G$ and $\chi\in \widehat{G}$, we denote by 
	$$\chi(\mathcal{K})=\sum_{g\in G}a_g(\mathcal{K})\chi(g),$$ 
	where $a_g(\mathcal{K})$ is the coefficient of $g$ in $\mathcal{K}$. Then the \textit{Fourier inversion formula} gives that  
	\begin{equation}\label{equ::FI}
		a_g(\mathcal{K})=\frac{1}{|G|}\sum_{\chi\in \widehat{G}}\chi(\mathcal{K}g^{-1})\chi(e).
	\end{equation}
	For a more comprehensive understanding of representation theory, one may consult the reference \cite{Ste12}.

	Now suppose that $G$ is an abelian group. It is known that the set of all irreducible characters of $G$ is 
	\begin{equation*}
		\widehat{G} = \{\chi_g \mid g \in G\},
	\end{equation*}
	where $\chi_g$ is the function defined in \eqref{equ::char_Ab}.	Furthermore, for any $g \in G$, the character $\chi_g$ is  a group homomorphism from $G$ to $\mathbb{C}^*$, and so is  a representation of $G$ with degree $1$. Let $\mathcal{K},\mathcal{L}\in\mathbb{C}G$. The Fourier inversion formula \eqref{equ::FI} implies that
	\[\mathcal{K}=\mathcal{L}~\mbox{if and only if}~ \chi_g(\mathcal{K})=\chi_g(\mathcal{L})~\mbox{for all}~g \in G.\] 
	Combining this with Lemma \ref{lem::DR}, we 
	obtain the following characterization for distance-regular Cayley graphs over abelian groups.  
	\begin{lemma}\label{lem::DR_1}
		Let $G$ be an abelian group, and let $\Gamma = \mathrm{Cay}(G, S)$ be a Cayley graph with diameter $d$. Then $\Gamma$ is distance-regular with intersection array $\{b_0, b_1, \ldots, b_{d-1}; c_1, c_2, \ldots, c_d\}$ if and only if for every $g \in G$, the following system of equations holds:
		\begin{equation*}\label{equ::CDR}
			\left\{\begin{aligned}
				\chi_g(\underline{S_1})\cdot\chi_g(\underline{S_1})&=b_0+a_1\chi_g(\underline{S_1})+c_2\chi_g(\underline{S_2}),\\
				\chi_g(\underline{S_2})\cdot\chi_g(\underline{S_1})&=b_1\chi_g(\underline{S_1})+a_2\chi_g(\underline{S_2})+c_3\chi_g(\underline{S_3}),\\ 
				&\vdots\\
				\chi_g(\underline{S_d})\cdot\chi_g(\underline{S_1})&=b_{d-1}\chi_g(\underline{S_{d-1}})+a_{d}\chi_g(\underline{S_d}).
			\end{aligned}\right.
		\end{equation*}
	\end{lemma}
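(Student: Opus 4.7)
The plan is to reduce Lemma \ref{lem::DR_1} to Lemma \ref{lem::DR} together with the character-theoretic identity for the group algebra of an abelian group that was recorded just before the lemma statement. By Lemma \ref{lem::DR}, the graph $\Gamma=\mathrm{Cay}(G,S)$ is distance-regular with the prescribed intersection array if and only if the system of equations
\begin{equation*}
\underline{S_i}\cdot\underline{S_1}=b_{i-1}\underline{S_{i-1}}+a_i\underline{S_i}+c_{i+1}\underline{S_{i+1}}\qquad (1\le i\le d)
\end{equation*}
holds in $\mathbb{Z}G$ (with the convention $b_d=c_{d+1}=0$ and $\underline{S_{d+1}}=0$). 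So the problem is to translate equality of two elements of $\mathbb{C}G$ into a collection of scalar identities indexed by $g\in G$.

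The first step is to apply the equivalence $\mathcal{K}=\mathcal{L}\Leftrightarrow\chi_g(\mathcal{K})=\chi_g(\mathcal{L})\;\forall g\in G$ (which follows immediately from the Fourier inversion formula \eqref{equ::FI}, since for an abelian group $G$ the family $\{\chi_g\}_{g\in G}$ exhausts the irreducible characters $\widehat{G}$). Applying $\chi_g$ to both sides of each equation in the system above is legitimate because $\chi_g$ extends linearly to $\mathbb{C}G$, so the right-hand side transforms in the obvious way.

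The second step is to handle the left-hand side. Since $G$ is abelian, each $\chi_g$ is a one-dimensional representation, i.e.\ a homomorphism $G\to\mathbb{C}^\ast$, and hence its linear extension to $\mathbb{C}G$ is multiplicative: $\chi_g(\mathcal{K}\mathcal{L})=\chi_g(\mathcal{K})\chi_g(\mathcal{L})$ for all $\mathcal{K},\mathcal{L}\in\mathbb{C}G$. In particular, $\chi_g(\underline{S_i}\cdot\underline{S_1})=\chi_g(\underline{S_i})\,\chi_g(\underline{S_1})$, so the $i$-th equation in $\mathbb{Z}G$ is equivalent, after applying $\chi_g$, to the $i$-th scalar equation in the statement of Lemma \ref{lem::DR_1}. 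Combining these two steps and ranging over $i\in\{1,\ldots,d\}$ and $g\in G$ yields the desired equivalence.

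There is no real obstacle here: the argument is essentially bookkeeping, with the only nontrivial inputs being (i) the multiplicativity of degree-one characters, which makes the quadratic left-hand sides factorize cleanly, and (ii) the fact that for abelian $G$ the characters $\chi_g$ form a complete set of irreducible characters, so equality in $\mathbb{C}G$ is detected by evaluating at every $\chi_g$. Both facts are supplied in the paragraphs immediately preceding the lemma.
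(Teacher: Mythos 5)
Your proposal is correct and follows exactly the route the paper intends: the paper derives Lemma \ref{lem::DR_1} by combining Lemma \ref{lem::DR} with the equivalence $\mathcal{K}=\mathcal{L}\Leftrightarrow\chi_g(\mathcal{K})=\chi_g(\mathcal{L})$ for all $g\in G$ (a consequence of the Fourier inversion formula \eqref{equ::FI}), together with the multiplicativity of the degree-one characters $\chi_g$, which is precisely your two-step argument. No issues.
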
	
	
	Let $\Gamma = \mathrm{Cay}(G, S)$ be a Cayley graph over the abelian group $G$. According to \cite{Babai}, the eigenvalues of $\Gamma = \mathrm{Cay}(G, S)$ are given by
	\[ \chi_g(\underline{S}) = \sum_{s \in S} \chi_g(s), \quad \text{for all} \ g \in G. \]
	Suppose further that $\Gamma$ is  distance-regular and has diameter $d$. Then $\Gamma$ has exactly $d + 1$ distinct eigenvalues, denoted as $\theta_0 > \theta_1 > \dots > \theta_d$. Let $\mathfrak{A}$ be the Bose-Mesner algebra corresponding to the symmetric association scheme derived from $\Gamma$, and let $E_0 = \frac{1}{|G|}J, E_1, \dots, E_d$ be the primitive idempotents of $\mathfrak{A}$ such that $A(\Gamma)E_i = A_1E_i = \theta_iE_i$ for all $i \in \{0, 1, \dots, d\}$. We denote by $\widehat{S}_i = \{g \in G \mid \chi_g(\underline{S}) = \theta_i\}$. Clearly, there is a one-to-one correspondence between $\widehat{S}_i$ and $E_i$.
	Let $\tau$ be a permutation on $\{0, 1, \dots, d\}$ that fixes $0$. We say that $\Gamma$ has a \textit{$Q$-polynomial ordering} $\widehat{S}_{\tau(0)}, \widehat{S}_{\tau(1)}, \dots, \widehat{S}_{\tau(d)}$ if it is $Q$-polynomial with respect to the ordering of primitive idempotents $E_{\tau(0)}, E_{\tau(1)}, \dots, E_{\tau(d)}$.

	\begin{lemma}\label{lem::dual_graph}
		Let $G$ be an abelian group, and let $\Gamma = \mathrm{Cay}(G, S)$ be a distance-regular Cayley graph with diameter $d$ over $G$. Let $\theta_0 > \theta_1 > \cdots > \theta_d$ be all the distinct eigenvalues of $\Gamma$, and let $\widehat{S}_i = \{g \in G \mid \chi_g(\underline{S}) = \theta_i\}$ for $0 \leq i \leq d$. If $\Gamma$ has a $Q$-polynomial ordering $\widehat{S}_{\tau(0)}, \widehat{S}_{\tau(1)}, \ldots, \widehat{S}_{\tau(d)}$, where $\tau$ is a permutation on the set $\{0, 1, \ldots, d\}$ that fixes $0$, then the Cayley graph  $\widehat{\Gamma} = \mathrm{Cay}(G, \widehat{S}_{\tau(1)})$ is a distance-regular graph of diameter $d$ with intersection numbers $q_{i,j}^k$, where $q_{i,j}^k$ are the Krein parameters of $\Gamma$.
	\end{lemma}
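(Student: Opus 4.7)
The plan is to combine the Schur-ring characterization of distance-regular Cayley graphs (Lemma \ref{lem::Schur_DRG}) with the duality of Schur rings over abelian groups (Lemma \ref{lem::dual_Schur_ring}) and the $P$-polynomial criterion (Lemma \ref{lem::P-polynomial}). First I would invoke Lemma \ref{lem::Schur_DRG}(i) to deduce that the distance module $\mathcal{D} = \mathcal{D}_\mathbb{Z}(G,S) = \mathrm{span}\{\underline{S_0},\underline{S_1},\ldots,\underline{S_d}\}$ is a (symmetric) Schur ring over $G$, whose associated symmetric association scheme on $G$ coincides with the one derived from $\Gamma$ and therefore has Krein parameters equal to those of $\Gamma$.

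Next I would apply Lemma \ref{lem::dual_Schur_ring} to produce the dual Schur ring $\widehat{\mathcal{D}} = \mathrm{span}\{\underline{E_0},\underline{E_1},\ldots,\underline{E_d}\}$, whose simple basis is indexed by the equivalence classes of the relation $g\mathcal{R}h \iff \chi_g(\underline{S_i})=\chi_h(\underline{S_i})$ for all $i\in\{0,\ldots,d\}$, and whose intersection numbers are the Krein parameters $q_{i,j}^k$. I would then identify these equivalence classes with the fibres $\widehat{S}_0,\widehat{S}_1,\ldots,\widehat{S}_d$ of the eigenvalue map $g \mapsto \chi_g(\underline{S})$ as follows: since $\Gamma$ is distance-regular, every distance-$i$ adjacency matrix $A_i$ lies in the Bose--Mesner algebra generated by $A_1$, so there exists a polynomial $v_i$ with $A_i = v_i(A_1)$, hence $\chi_g(\underline{S_i})=v_i(\chi_g(\underline{S}))$. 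Thus $\chi_g(\underline{S_i})=\chi_h(\underline{S_i})$ for every $i$ if and only if $\chi_g(\underline{S})=\chi_h(\underline{S})$, so the equivalence classes of $\mathcal{R}$ are exactly the sets $\widehat{S}_0,\ldots,\widehat{S}_d$. A brief check shows each $\widehat{S}_i$ is inverse-closed (because $S=S^{-1}$ forces the eigenvalues $\chi_g(\underline{S})$ to be real, and $\chi_{g^{-1}}(\underline{S})=\overline{\chi_g(\underline{S})}$), so the Cayley graph $\mathrm{Cay}(G,\widehat{S}_{\tau(1)})$ is indeed well-defined, and the association scheme $(G,\widehat{\mathfrak{R}})$ attached to $\widehat{\mathcal{D}}$ is symmetric with basic relations $\widehat{R}_i = \{(g,h)\mid h^{-1}g\in\widehat{S}_i\}$.

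To finish, I would translate the $Q$-polynomial hypothesis into a $P$-polynomial statement about $(G,\widehat{\mathfrak{R}})$: the ordering $\widehat{S}_{\tau(0)},\ldots,\widehat{S}_{\tau(d)}$ being a $Q$-polynomial ordering for $\Gamma$ means that the Krein parameters $q_{i,j}^k$, taken with respect to this ordering, satisfy $q_{i,j}^k=0$ whenever $k>i+j$ and $q_{i,j}^{i+j}\neq 0$. Since these Krein parameters are precisely the intersection numbers of $\widehat{\mathcal{D}}$, the scheme $(G,\widehat{\mathfrak{R}})$ equipped with the ordering $\widehat{R}_{\tau(0)},\widehat{R}_{\tau(1)},\ldots,\widehat{R}_{\tau(d)}$ is $P$-polynomial. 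Lemma \ref{lem::P-polynomial} then yields that $(G,\widehat{R}_{\tau(1)})=\mathrm{Cay}(G,\widehat{S}_{\tau(1)})=\widehat{\Gamma}$ is a distance-regular graph of diameter $d$, and its intersection numbers agree with the intersection numbers of the dual Schur ring, namely the Krein parameters $q_{i,j}^k$ of $\Gamma$.

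The main obstacle will be the bookkeeping: correctly matching the equivalence classes produced by Lemma \ref{lem::dual_Schur_ring} with the eigenvalue fibres $\widehat{S}_i$ and ensuring the permutation $\tau$ aligns the $Q$-polynomial ordering of $\Gamma$ with the ordering of simple basis elements of $\widehat{\mathcal{D}}$, so that the intersection numbers of $\widehat{\Gamma}$ are indexed exactly as the Krein parameters of $\Gamma$. Once the identifications are in place, the rest is a direct application of the three lemmas above.
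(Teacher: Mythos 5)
Your proposal is correct and follows essentially the same route as the paper: pass from the distance module (a Schur ring by Lemma \ref{lem::Schur_DRG}) to its dual via Lemma \ref{lem::dual_Schur_ring}, identify the dual's simple basis with the eigenvalue fibres $\widehat{S}_i$, and conclude with Lemma \ref{lem::P-polynomial}. The only cosmetic difference is that you justify the identification of equivalence classes with the fibres via the polynomial relation $A_i=v_i(A_1)$, whereas the paper cites the recursion of Lemma \ref{lem::DR_1}; these are the same fact.
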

	\begin{proof}
		By Lemma \ref{lem::Schur_DRG}, $\mathcal{S}(G) = \mathrm{span}\{\underline{S_0}, \underline{S_1}, \ldots, \underline{S_d}\}$ is a Schur ring over $G$. Note that $\widehat{S}_i$ is inverse closed for all $i\in\{0,1,\ldots,d\}$. For any $g,h\in G$,  we have $g, h \in \widehat{S}_i$ if and only if  $\chi_g(\underline{S}) = \chi_h(\underline{S}) = \theta_i$, which is the case if and only if $\chi_g(\underline{S_j}) = \chi_h(\underline{S_j})$ for all $j \in \{0, 1, \ldots, d\}$ by Lemma \ref{lem::DR_1}. Then Lemma \ref{lem::dual_Schur_ring} indicates that  $\widehat{\mathcal{S}}(G)=\mathrm{span}\{\widehat{S}_0, \widehat{S}_1, \ldots, \widehat{S}_d\}$ is a Schur ring over $G$ with intersection numbers $q_{i,j}^k$. Since $\Gamma$ has a $Q$-polynomial ordering $\widehat{S}_{\tau(0)}, \widehat{S}_{\tau(1)}, \ldots, \widehat{S}_{\tau(d)}$, we claim that the (symmetric) association scheme derived from the Schur ring $\widehat{\mathcal{S}}(G)$   is  $P$-polynomial with an ordering of relations $\widehat{S}_{\tau(0)}, \widehat{S}_{\tau(1)}, \ldots, \widehat{S}_{\tau(d)}$. Therefore, by Lemma \ref{lem::P-polynomial}, the Cayley graph $\widehat{\Gamma} = \mathrm{Cay}(G, \widehat{S}_{\tau(1)})$ is a distance-regular graph of diameter $d$ with intersection 
		numbers $q_{i,j}^k$.
	\end{proof}
	
	The Cayley graph $\widehat{\Gamma}=\mathrm{Cay}(G, \widehat{S}_{\tau(1)})$ in Lemma \ref{lem::dual_graph} is called the \textit{dual graph} of the $Q$-polynomial distance-regular graph $\Gamma=\mathrm{Cay}(G, S)$.

	A graph is called \textit{integral} if all its eigenvalues are integers.   Let $\mathcal{F}_G$ be the set of all subgroups of $G$. The \textit{Boolean algebra}  $\mathbb{B}(\mathcal{F}_G)$ is the set whose elements are obtained by  arbitrary finite intersections, unions, and complements of the elements in $\mathcal{F}_G$. The minimal non-empty elements of $\mathbb{B}(\mathcal{F}_G)$ are called \textit{atoms}. It is known that each element of $\mathbb{B}(\mathcal{F}_G)$ is the union of some atoms, and the atoms for $\mathbb{B}(\mathcal{F}_G)$ are the sets $[g] = \{x\in G \mid \langle x\rangle= \langle g\rangle\}$, $g\in G$.  The following lemma provides a characterization for integral Cayley graphs over abelian groups.
	
	\begin{lemma}[\cite{ICG}]\label{lem::int_Cay}
		Let $G$ be an abelian group, and let $S$ be an inverse closed subset of $G$ with $e\notin S$. Then the Cayley graph $\mathrm{Cay}(G,S)$ is integral if and only if  $S\in \mathbb{B}(\mathcal{F}_G)$.
	\end{lemma}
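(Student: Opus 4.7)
The plan is to combine two standard ingredients: first, the eigenvalues of $\mathrm{Cay}(G,S)$ are the character sums $\chi_g(\underline{S})=\sum_{s\in S}\chi_g(s)$, as recalled in the paper; second, the Galois group $\Sigma:=\mathrm{Gal}(\mathbb{Q}(\zeta_n)/\mathbb{Q})\cong(\mathbb{Z}/n\mathbb{Z})^*$ (where $n$ is the exponent of $G$) acts on these sums in a transparent way. Specifically, for $k$ with $\gcd(k,n)=1$ and the associated $\sigma_k\in\Sigma$, one checks directly from the formula \eqref{equ::char_Ab} that $\sigma_k(\chi_g(x))=\chi_g(x^k)$, hence
\[
\sigma_k\!\left(\chi_g(\underline{S})\right)=\sum_{s\in S}\chi_g(s^k)=\chi_g(\underline{S^{(k)}}),
\]
where $S^{(k)}=\{s^k:s\in S\}$. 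This identity is the engine driving both implications.

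For the forward implication, I would decompose any $S\in\mathbb{B}(\mathcal{F}_G)$ as a disjoint union of atoms $[g_1],\ldots,[g_r]$ and show $c(g_i,h):=\sum_{x\in[g_i]}\chi_h(x)\in\mathbb{Z}$ for every $h\in G$. Each atom $[g]$ consists exactly of the generators of $\langle g\rangle$; since $\gcd(k,n)=1$ forces $\gcd(k,|g|)=1$, the map $x\mapsto x^k$ permutes $[g]$, so the above identity shows $\sigma_k(c(g,h))=c(g,h)$ for every $\sigma_k\in\Sigma$. Hence $c(g,h)\in\mathbb{Q}$, and being a sum of roots of unity it is an algebraic integer, so $c(g,h)\in\mathbb{Z}$. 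Summing over the atoms gives $\chi_h(\underline{S})\in\mathbb{Z}$ for all $h$, which is the integrality of $\mathrm{Cay}(G,S)$.

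For the converse, suppose every $\chi_h(\underline{S})$ is an integer. Then each is fixed by $\Sigma$, so for every $k$ coprime to $n$ and every $h\in G$,
\[
\chi_h(\underline{S^{(k)}})=\sigma_k(\chi_h(\underline{S}))=\chi_h(\underline{S}).
\]
Applying the Fourier inversion formula \eqref{equ::FI} to the group-algebra element $\underline{S^{(k)}}-\underline{S}$ yields $\underline{S^{(k)}}=\underline{S}$ in $\mathbb{C}G$, hence $S^{(k)}=S$ as subsets of $G$. In other words, whenever $x\in S$ and $\gcd(k,n)=1$ we have $x^k\in S$; since $k$ ranges over all integers coprime to $|x|$ (which divides $n$), the set $S$ contains every generator of $\langle x\rangle$, i.e., the full atom $[x]$. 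Therefore $S$ is a union of atoms and lies in $\mathbb{B}(\mathcal{F}_G)$.

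The only delicate point is the verification that the Galois action indeed transforms $\chi_g(x)$ into $\chi_g(x^k)$; this is immediate from \eqref{equ::char_Ab} but must be stated cleanly. Everything else reduces to Fourier inversion and the elementary observation that atoms are orbits under $x\mapsto x^k$, so I do not anticipate a genuine obstacle.
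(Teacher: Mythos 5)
Your proof is correct. Note that the paper does not prove this lemma at all---it is quoted from the reference [ICG] (Alperin--Peterson)---so there is no internal proof to compare against; your Galois-action argument is essentially the standard one underlying that reference. Both directions are sound: the identity $\sigma_k(\chi_g(x))=\chi_g(x^k)$ follows immediately from \eqref{equ::char_Ab} since $\sigma_k$ sends each $\zeta_{n_i}$ to $\zeta_{n_i}^k$, the atoms $[g]$ are indeed permuted by $x\mapsto x^k$ when $\gcd(k,n)=1$, and the Fourier inversion step in the converse is legitimate because $x\mapsto x^k$ is an automorphism of $G$ (so $\underline{S^{(k)}}$ really is $\underline{S^k}$ with $0$--$1$ coefficients). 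The one point you should state explicitly is the final step of the converse: from $S^{(k)}=S$ for all $k$ coprime to $n$ you need that \emph{every} generator $x^j$ of $\langle x\rangle$ (i.e.\ every $j$ coprime to $|x|$) is of the form $x^k$ with $\gcd(k,n)=1$. This holds because the reduction map $(\mathbb{Z}/n\mathbb{Z})^*\to(\mathbb{Z}/|x|\mathbb{Z})^*$ is surjective for $|x|\mid n$; as written, ``$k$ ranges over all integers coprime to $|x|$'' elides this small but necessary lemma. With that sentence added, the proof is complete.
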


	\section{Finite geometry}\label{section::4}	
	
	In this section, we introduce some classic results in finite geometry, which play a key role in the proof of our main result.

	Let $G$ be a finite group, and let  $N$ be a proper subgroup of $G$ with order $|N| = r$ and index $[G:N] = m$. A $k$-subset $D$ of $G$ is called an $(m, r, k, \mu)$-\textit{relative difference set} relative to $N$ (the forbidden subgroup) if and only if
	$$\underline{D} \cdot \underline{D}^{(-1)} = k \cdot e + \mu \cdot \underline{G \setminus N}.$$
	
	\begin{lemma}[{\cite[Theorem 4.1.1]{RDS}}]\label{lem::NRDS}
		Let $D$ be a $(nm,n,nm,m)$-relative difference set relative to $N$ in an abelian group $G$. Let $g$ be an element in $G$. Then the order of $g$ divides $nm$, or $n=2$, $m=1$ and $G\cong\mathbb{Z}_4$.
	\end{lemma}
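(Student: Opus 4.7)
The strategy is character-theoretic. First, apply each character $\chi \in \widehat{G}$ to the defining equation $\underline{D}\cdot\underline{D}^{(-1)} = nm\cdot e + m\cdot\underline{G\setminus N}$. Since $\chi(\underline{G})$ equals $|G|$ if $\chi = 1$ and $0$ otherwise, and $\chi(\underline{N})$ equals $n$ if $\chi|_N = 1$ and $0$ otherwise, this partitions $\widehat{G}$ into three types and yields
\[
|\chi(\underline{D})|^2 = \begin{cases} (nm)^2, & \chi = 1,\\ 0, & \chi \neq 1\text{ and }\chi|_N = 1,\\ nm, & \chi|_N \neq 1.\end{cases}
\]
In particular, $D$ is a transversal for the cosets of $N$ in $G$: the middle case says $\chi(\underline{D}) = 0$ for every non-trivial character of $G/N$, forcing the image of $D$ in $G/N$ to be $\underline{G/N}$.

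Next, I argue by contradiction. Suppose some $g\in G$ has $\mathrm{ord}(g) \nmid nm$, and pick a prime $p$ with $p^a \| nm$ and $p^{a+1} \mid \mathrm{ord}(g)$. Replacing $g$ by $g^{\mathrm{ord}(g)/p^{a+1}}$, I may assume $h := g$ has order exactly $p^{a+1}$; let $H := \langle h\rangle$. A crucial observation is that $H \cap N$ cannot be trivial: otherwise $H$ injects into $G/N$, forcing $p^{a+1} \mid |G/N| = nm$ against $p^a \| nm$. Consequently, defining $\chi_0 \in \widehat{H}$ by $\chi_0(h) = \zeta_{p^{a+1}}$, the restriction $\chi_0|_{H\cap N}$ is automatically non-trivial (since $\chi_0$ is faithful on $H$), so any extension $\chi \in \widehat{G}$ (which exists because $G$ is abelian and $\mathbb{C}^\ast$ is divisible) satisfies $\chi|_N \neq 1$, and hence $\alpha := \chi(\underline{D})$ has $|\alpha|^2 = nm$.

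The hard step is to derive a contradiction from the cyclotomic integer $\alpha$ with $|\alpha|^2 = nm$ and $\chi(h) = \zeta_{p^{a+1}}$. Factoring $G = G_p \times G_{p'}$ and passing to the norm from $\mathbb{Z}[\zeta_{\exp G}]$ down to $\mathbb{Z}[\zeta_{p^{a+1}}]$ if necessary, one reduces to analyzing an element of $\mathbb{Z}[\zeta_{p^{a+1}}]$ whose absolute value squared has exact $p$-part $p^a$. In this ring, the prime $p$ is totally ramified with unique prime $\pi = 1-\zeta_{p^{a+1}}$ of ramification index $\varphi(p^{a+1}) = p^a(p-1)$. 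A careful $\pi$-adic valuation analysis of $\alpha\overline{\alpha} = nm$, combined with the fact that $\alpha$ is an integer linear combination of exactly $nm$ coset representatives of $N$ (the transversal property), forces an impossible divisibility on $\alpha$ unless one lands in the sporadic configuration $p = 2$, $a = 0$, $n = 2$, $m = 1$. A direct verification then confirms that $D = \{0,1\} \subset \mathbb{Z}_4$ with $N = \{0,2\}$ is a genuine $(2,2,2,1)$-RDS in which $G \cong \mathbb{Z}_4$ has an element of order $4 \nmid nm = 2$, exhibiting the unique exception.

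The principal obstacle is precisely this last cyclotomic calculation: controlling the $\pi$-adic valuation of a character sum over a transversal typically requires either an invocation of Turyn-type results on self-conjugate ideals in cyclotomic fields, or an explicit bound on how few distinct roots of unity can appear among the values $\chi(d)$ ($d \in D$). The peculiar parity behaviour at $p = 2$, where $1 \equiv -1 \pmod{\pi}$ and Kronecker's theorem on roots of unity admits extra solutions, is exactly what allows the $\mathbb{Z}_4$ example to slip through and accounts for the lone exception in the statement.
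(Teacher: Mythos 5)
The paper does not prove this lemma; it is quoted directly from Pott's book \cite{RDS}, so there is no internal proof to compare against, and your attempt has to stand on its own. Up to the point you yourself label ``the hard step,'' it does: the three-valued character computation, the resulting transversal property of $D$ modulo $N$, the choice of a prime $p$ with $p^a$ the exact power of $p$ dividing $nm$ and $p^{a+1}\mid\mathrm{ord}(g)$, the observation that $H\cap N\neq\{e\}$, and the construction of a character $\chi$ with $\chi|_N\neq 1$ and $\chi(h)=\zeta_{p^{a+1}}$ are all correct. But everything up to there is routine; the entire content of the theorem is the final step, which you describe only as ``a careful $\pi$-adic valuation analysis \ldots forces an impossible divisibility'' without carrying it out. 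This is a genuine gap, not a compressible detail. The bare existence of an algebraic integer $\alpha$ with $\alpha\overline{\alpha}=nm$ is not a contradiction --- Gauss sums are exactly such elements --- and in $\mathbb{Z}[\zeta_{p^{a+1}}]$ a valuation count only yields $v_\pi(\alpha)=\tfrac{1}{2}\,a\,\varphi(p^{a+1})$, a perfectly admissible integer for every odd $p$ and for $p=2$, $a\geq 1$. (In the full ring $\mathbb{Z}[\zeta_{\exp G}]$ matters are worse: $p$ is no longer totally ramified and complex conjugation can permute the primes above $p$, so one cannot even equate $v_{\mathfrak{p}}(\alpha)$ with $v_{\mathfrak{p}}(\overline{\alpha})$.) Any correct argument must exploit the simultaneous character conditions together with the non-negativity and boundedness of coefficients --- in practice one projects $D$ onto a cyclic quotient $G/W\cong\mathbb{Z}_{p^{a+1}}$ and analyses the image via Ma's lemma or a counting argument --- and the Turyn-type self-conjugacy results you invoke require a hypothesis that is neither available here nor verified by you.

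Two concrete errors in your sketch of that step confirm it would not assemble into a proof as written. First, your claimed sporadic configuration has $a=0$, but $a=0$ is impossible: if $p\nmid nm$ then $p\nmid n$ and $p\nmid m$, hence $p\nmid |G|=n^2m$ and $G$ has no element of order divisible by $p$ at all; in the genuine exception $G\cong\mathbb{Z}_4$, $N=\{0,2\}$, $D=\{0,1\}$, one has $nm=2$, $p=2$ and $a=1$. Second, ``passing to the norm from $\mathbb{Z}[\zeta_{\exp G}]$ down to $\mathbb{Z}[\zeta_{p^{a+1}}]$'' is not a valid reduction: taking norms replaces $\alpha\overline{\alpha}=nm$ by a power of $nm$ and destroys the structure of $\alpha$ as a sum of $nm$ roots of unity, which is precisely the structure the argument must use. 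The workable reduction is a quotient of the group, not a norm of the field. As it stands, your proposal establishes the easy half of the argument and leaves the theorem itself unproved.
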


	A subset $D$ of $G$ is called a \textit{polynomial addition set} if there exists a polynomial $f(x) \in \mathbb{Z}[x]$ with degree $\deg f \geq 1$ such that $f(\underline{D})=m\underline{G}$ for some integer $m$. In this context, we also describe $D$ as a $(v, k, f(x))$-polynomial addition set, where $|G|=v$ and $|D| = k$. If $G$ is cyclic, then $D$ is called a $(v, k, f(x))$-\textit{cyclic polynomial addition set}.
	
	\begin{lemma}[{\cite[Corollary 5.4.5]{MaPhD}}]\label{lem::PAS1}
		There is no $(v,k,x^n-b)$-cyclic polynomial addition set with $1<k<v-1$ and $n\geq 1$.
	\end{lemma}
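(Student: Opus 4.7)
The plan is to argue by contradiction. Suppose $D\subseteq G=\mathbb{Z}_v$ is such a set, so that in the group ring $\mathbb{Z}G$ one has
$\underline{D}^{\,n}=b\cdot e+m\,\underline{G}$
for some $m\in\mathbb{Z}$. Applying each (one-dimensional) irreducible character $\chi$ of the abelian group $G$ to this equation, the trivial character yields $k^{n}=b+mv$, while each of the $v-1$ non-trivial characters gives $\chi(\underline{D})^{n}=b$, because $\chi(\underline{G})=0$. This is the sole input from the hypothesis; the rest of the proof is an analysis of what this forces on $D$.

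If $b=0$, then $\chi(\underline{D})=0$ for every non-trivial $\chi$, and Fourier inversion forces every coefficient of $\underline{D}$ to equal $k/v$. Since $D$ is a $0/1$-subset this would require $k\in\{0,v\}$, contradicting $1<k<v-1$. If $b\neq 0$, then $|\chi(\underline{D})|^{2}=|b|^{2/n}$ takes the same positive real value for all $v-1$ non-trivial $\chi$. Parseval's identity $\sum_{\chi}|\chi(\underline{D})|^{2}=vk$ then forces $|b|^{2/n}=k(v-k)/(v-1)$, and Fourier-inverting the product $\underline{D}\cdot\underline{D}^{(-1)}$ shows it equals $(k-\lambda)\,e+\lambda\,\underline{G}$ with $\lambda=k(k-1)/(v-1)$, so $D$ must be a cyclic $(v,k,\lambda)$-difference set.

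At this point the difference-set property by itself is not yet a contradiction (such sets do exist, e.g.\ Paley difference sets), so the real work is to exploit the stronger information that $\chi(\underline{D})^{n}$ equals the \emph{same} integer $b$ for every non-trivial $\chi$, not merely that $|\chi(\underline{D})|$ is constant. Fixing a faithful character $\chi_{1}$ and setting $\alpha=\chi_{1}(\underline{D})\in\mathbb{Z}[\zeta_{v}]$, the Galois automorphisms $\sigma_{j}\colon\zeta_{v}\mapsto\zeta_{v}^{j}$ (with $\gcd(j,v)=1$) send $\alpha$ to other non-trivial character values of $\underline{D}$, all of which are roots of $x^{n}-b$; hence $\sigma_{j}(\alpha)/\alpha$ is an $n$-th root of unity for every such $j$. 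Combining this with the analogous constraints arising from characters that factor through each proper quotient $\mathbb{Z}_{v}/H$ (so that $\chi(\underline{D})\in\mathbb{Z}[\zeta_{v/|H|}]$ is also a root of $x^{n}-b$) pins $\alpha$ down to a very restricted cyclotomic subring of $\mathbb{Z}[\zeta_{v}]$.

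To finish, a standard algebraic-number-theoretic argument (carried out in Ma's thesis, comparing the prime ramification in the subfield $\mathbb{Q}(\alpha)$ generated by a root of $x^{n}-b$ with the ramification in $\mathbb{Q}(\zeta_{v})$, and invoking the norm relation $N_{\mathbb{Q}(\alpha)/\mathbb{Q}}(\alpha)^{n}=b^{[\mathbb{Q}(\alpha):\mathbb{Q}]}$ together with the difference-set identity $k-\lambda=|b|^{2/n}$) shows that these Galois constraints are incompatible with $1<k<v-1$, yielding the required contradiction. The main obstacle is precisely this last step: controlling the possible values of $\alpha$ uniformly across all divisors of $v$ and ruling out each compatible case simultaneously; the earlier character-theoretic and difference-set reductions are routine, but the final cyclotomic analysis is where the substance of the lemma lies.
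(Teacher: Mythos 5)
The paper does not actually prove this lemma --- it imports it verbatim from Ma's thesis (\cite[Corollary 5.4.5]{MaPhD}) and only remarks that the proof there relies on Lemma \ref{lem::Ma}. So the only thing to assess is whether your argument stands on its own, and it does not. Your character-theoretic setup is correct and would be the opening of any proof: applying non-trivial characters gives $\chi(\underline{D})^n=b$, the case $b=0$ is dispatched by Fourier inversion, Parseval gives $|\chi(\underline{D})|^2=k(v-k)/(v-1)$ for $b\neq 0$, and $D$ is forced to be a $(v,k,\lambda)$-difference set with $\lambda=k(k-1)/(v-1)$. All of this is routine and, as you yourself note, not where the content lies. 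The decisive step --- showing that no cyclic difference set with $1<k<v-1$ can have \emph{all} of its non-trivial character sums among the roots of a single binomial $x^n-b$ --- is exactly the statement being proved, and at that point you write that ``a standard algebraic-number-theoretic argument (carried out in Ma's thesis)'' finishes the job. That is circular: the lemma \emph{is} the result of Ma's thesis. A reduction of a cited result to its own hard core, followed by a citation of the source, is not a proof.

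Beyond the circularity, the mechanism you gesture at (comparing ramification in $\mathbb{Q}(\alpha)$ with that in $\mathbb{Q}(\zeta_v)$, plus the norm relation $N(\alpha)^n=b^{[\mathbb{Q}(\alpha):\mathbb{Q}]}$) is not clearly the right tool and would visibly stall in natural cases. For instance, if $b$ is a perfect $n$-th power then every root of $x^n-b$ is a rational integer times an $n$-th root of unity, $\mathbb{Q}(\alpha)$ can be as small as $\mathbb{Q}$, and no ramification obstruction arises at all; the constraint that remains is purely arithmetic--combinatorial (divisibility of all non-trivial character sums by a fixed prime), which is precisely what Lemma \ref{lem::Ma} is designed to exploit via the decomposition $\underline{Y}=p^a\underline{X_1}+\underline{P}\cdot\underline{X_2}$ --- compare how the present paper uses that lemma in the proof of Lemma \ref{lem::key2}. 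To make your proposal into a proof you would need to actually carry out that analysis (or an equivalent one) uniformly in $n$, $b$, and the divisors of $v$, rather than assert that it can be done.
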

	
	The proof of Lemma \ref{lem::PAS1} relies on the following crucial lemma from  \cite{MaPhD}, which is also useful in the proof of our main result.
	
	\begin{lemma}[{\cite[Lemma 1.5.1]{S02}}, {\cite[Lemma 3.2.3]{MaPhD}}]\label{lem::Ma}
		Let $p$ be a prime and let $G$ be an abelian group with a cyclic Sylow $p$-subgroup $S$. If $\underline{Y}\in \mathbb{Z} G$ satisfies $\chi(\underline{Y})\equiv0 ~(\mathrm{mod}~p^a)$ for all characters $\chi\in \widehat{G}$ of order divisible by $|S|$, then there exist $\underline{X_1},\underline{X_2}\in \mathbb{Z}G$ such that $\underline{Y}=p^a\underline{X_1}+\underline{P}\cdot\underline{X_2}$, where $P$ is the unique subgroup of order $p$ of $G$. Furthermore, if $\underline{Y}$ has non-negative coefficients only, then $X_1$ and $X_2$ also can be chosen to have non-negative coefficients only.
	\end{lemma}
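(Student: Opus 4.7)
My plan is to reduce the statement to a divisibility assertion in a cyclotomic group algebra and then exploit Fourier inversion on the $p$-free part of $G$. Since $G$ is abelian with cyclic Sylow $p$-subgroup $S$, decompose $G = S \times H$ with $S = \langle g_0 \rangle$ of order $p^s$ and $\gcd(|H|,p)=1$, so that $P = \langle g_0^{p^{s-1}}\rangle$ is the unique subgroup of order $p$. Every irreducible character factors as $\chi = \psi\otimes\phi$ with $\psi\in\widehat{S}$, $\phi\in\widehat{H}$, and the order of $\chi$ is divisible by $|S|$ if and only if $\psi$ is faithful on the cyclic group $S$, which is equivalent to $\psi$ being nontrivial on $P$, i.e.\ $\chi(\underline{P})=0$; otherwise $\chi(\underline{P}) = p$. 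Thus the hypothesis concerns exactly the characters that annihilate $\underline{P}$.

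Next I would compute the quotient $\mathbb{Z}G/(\underline{P})$ explicitly. Under the identification $\mathbb{Z}S\cong\mathbb{Z}[x]/(x^{p^s}-1)$ sending $g_0\mapsto x$, one has $\underline{P} = 1 + x^{p^{s-1}} + \cdots + x^{(p-1)p^{s-1}} = \Phi_{p^s}(x)$, and the factorization $x^{p^s}-1 = (x^{p^{s-1}}-1)\Phi_{p^s}(x)$ yields $\mathbb{Z}S/(\underline{P})\cong\mathbb{Z}[\omega_{p^s}]$, whence $\mathbb{Z}G/(\underline{P})\cong\mathbb{Z}[\omega_{p^s}][H]$. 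Writing $\widetilde{Y}$ for the image of $\underline{Y}$ in this quotient, the desired identity $\underline{Y} = p^a\underline{X_1} + \underline{P}\cdot\underline{X_2}$ is equivalent to $\widetilde{Y}\in p^a\mathbb{Z}[\omega_{p^s}][H]$.

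To prove this divisibility, I would expand $\widetilde{Y} = \sum_{h\in H} c_h\, h$ with $c_h\in\mathbb{Z}[\omega_{p^s}]$ and apply Fourier inversion on $H$: $|H|c_h = \sum_{\phi\in\widehat{H}}\phi(\widetilde{Y})\phi(h^{-1})$ inside $\mathbb{Z}[\omega_{p^s},\omega_{|H|}]$. Each $\phi(\widetilde{Y})$ equals $\chi(\underline{Y})$ for the character $\chi = \psi\otimes\phi$ with $\psi\colon g_0\mapsto\omega_{p^s}$, which has order divisible by $|S|$, so by hypothesis it lies in $p^a\mathbb{Z}[\omega_{p^s},\omega_{|H|}]$. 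Because $\gcd(|H|,p)=1$, the integer $|H|$ is a unit modulo $p^a$, and since the extension $\mathbb{Z}[\omega_{p^s},\omega_{|H|}]/\mathbb{Z}[\omega_{p^s}]$ is unramified at $p$, we conclude $c_h \in p^a\mathbb{Z}[\omega_{p^s}]$. Hence $\widetilde{Y} = p^a\widetilde{X}$ for some $\widetilde{X}\in\mathbb{Z}[\omega_{p^s}][H]$, and any lift of $\widetilde{X}$ back to $\mathbb{Z}G$ supplies the desired $\underline{X_1},\underline{X_2}$.

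The main obstacle I anticipate is the non-negativity refinement when $\underline{Y}$ has only non-negative coefficients, since a generic lift of $\widetilde{X}$ along $\mathbb{Z}G\to\mathbb{Z}[\omega_{p^s}][H]$ need not be non-negative. My approach is to start from any decomposition $\underline{Y} = p^a\underline{X_1} + \underline{P}\cdot\underline{X_2}$ and iteratively correct negative entries using the identities $\underline{P}\cdot g = \sum_{p_0\in P}p_0 g$ and $\underline{P}^2 = p\underline{P}$, which provide the invariance-preserving substitutions $(\underline{X_1},\underline{X_2})\mapsto(\underline{X_1}+\underline{P}\cdot u,\ \underline{X_2}-p^a u)$ and $(\underline{X_1},\underline{X_2})\mapsto(\underline{X_1}-\underline{P}\cdot u,\ \underline{X_2}+p^{a-1}\underline{P}\cdot u)$, allowing one to redistribute negative coefficients between $\underline{X_1}$ and $\underline{X_2}$. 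The delicate point is to set up a well-founded measure of badness (e.g.\ the sum of absolute values of the negative entries of $\underline{X_1}$ and $\underline{X_2}$) that strictly decreases under a judicious choice of $u$ at each step; termination should be forced by the non-negativity of $\underline{Y}$ together with the equation $p^a\underline{X_1} + \underline{P}\cdot\underline{X_2} = \underline{Y}$, which bounds the available negative mass coset-by-coset with respect to~$P$.
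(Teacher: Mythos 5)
The paper does not prove this lemma at all --- it is quoted verbatim from Schmidt \cite[Lemma 1.5.1]{S02} and Ma \cite[Lemma 3.2.3]{MaPhD} --- so there is no in-paper proof to compare against; I can only assess your argument on its own terms. Your first half is correct and is essentially the standard argument: the identification of the characters of order divisible by $|S|$ with those killing $\underline{P}$, the isomorphism $\mathbb{Z}G/(\underline{P})\cong\mathbb{Z}[\omega_{p^s}][H]$ via $\Phi_{p^s}$, the reformulation of the conclusion as $\widetilde{Y}\in p^a\,\mathbb{Z}[\omega_{p^s}][H]$, and the Fourier inversion over $H$ are all sound. Two cosmetic remarks: you only ever use the characters $\psi\otimes\phi$ with $\psi(g_0)=\omega_{p^s}$ fixed, which is fine (a subfamily of the hypothesis suffices); and the appeal to unramifiedness is unnecessary --- from $|H|c_h\in p^a\mathbb{Z}[\omega_{p^s|H|}]$ and $|H|c_h\in\mathbb{Q}(\omega_{p^s})$ one gets $|H|c_h\in p^a\mathbb{Z}[\omega_{p^s}]$ simply because $\mathbb{Z}[\omega_{p^s}]$ is the full ring of integers of $\mathbb{Q}(\omega_{p^s})$, and then invertibility of $|H|$ modulo $p^a$ finishes it.

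The genuine gap is the non-negativity refinement, which you explicitly leave as a programme (``the delicate point is to set up a well-founded measure of badness \dots termination should be forced by \dots''). Your two substitutions do preserve the identity, but no decreasing measure is exhibited and no termination proof is given, so as written this half is not a proof. It is also more machinery than needed; the refinement follows directly. Since $(\underline{P}\cdot\underline{X_2})$ has constant coefficient on each coset $gP$ (namely the sum $\sigma_g$ of the coefficients of $\underline{X_2}$ over $gP$), the existence of \emph{any} decomposition $\underline{Y}=p^a\underline{X_1}+\underline{P}\cdot\underline{X_2}$ forces all coefficients $Y_h$, $h\in gP$, to be congruent to $\sigma_g$ modulo $p^a$. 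Now for each coset pick $m_g=\min_{h\in gP}Y_h\ge 0$ and let $v_g\in\{0,1,\ldots,p^a-1\}$ be its residue modulo $p^a$; define $\underline{X_2}'=\sum_g v_g\, g$ over a set of coset representatives and give $\underline{X_1}'$ the coefficient $(Y_h-v_g)/p^a$ at each $h\in gP$. These are integers by the congruence above, and they are non-negative because $0\le v_g\le m_g\le Y_h$. This yields the required non-negative $\underline{X_1}',\underline{X_2}'$ in one step, with no iteration. I would replace your final paragraph by this argument; with that change the proof is complete.
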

	
	A \textit{transversal design} $TD(r,v)$ of order $v$ with line size $r$ ($r\leq v$) is a triple $(\mathcal{P},\mathcal{G},\mathcal{L})$ such that 
	\begin{enumerate}[(i)]
		\item $\mathcal{P}$ is a set of $rv$ elements (called \textit{points});
		\item $\mathcal{G}$ is a partition of $\mathcal{P}$ into $r$ classes, each of size $v$ (called \textit{groups}); 
		\item  $\mathcal{L}$ is a collection of $r$-subsets of $\mathcal{P}$ (called \textit{lines});
		\item two distinct points are contained in  a unique line if and only if they are in distinct groups.
	\end{enumerate}
	It follows immediately that $|G \cap L| = 1$ for every $G \in \mathcal{G}$ and every $L \in \mathcal{L}$, and $|\mathcal{L}| = v^2$. The \textit{line graph} of a transversal design $TD(r, v)$ is the graph with lines as vertices and two of them being adjacent whenever there is a point incident to both lines. It is known that the line graph of a transversal design $TD(r, v)$ is a strongly regular graph with parameters $(v^2, r(v-1), v + r^2 - 3r, r^2 - r)$ (cf. \cite[p. 122]{Jur95}), and so has exactly three distinct eigenvalues, namely $r(v-1)$, $v-r$, and $-r$. 
	For $r=2$ we obtain the lattice graph $K_v \times K_v$, and for $r=v$ we obtain the complete multipartite graph $K_{v \times v}$.
	
	The following lemma provides certain restrictions for the  antipodal cover of the line graph of a transversal design $TD(r, v)$ with $r\leq v$. 
	\begin{lemma}[{\cite[Proposition 2.4]{Jur98}}]\label{lem::antipodal cover of line graph}
		An antipodal cover of the line graph of a transversal design $TD(r,v)$, $r\leq v$, has diameter four when $r=2$ and diameter three otherwise. 
	\end{lemma}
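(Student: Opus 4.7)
My approach would be to analyze the possible intersection arrays of an antipodal distance-regular cover $\tilde{\Gamma}$ of $\Gamma = L(TD(r,v))$ and eliminate all but the claimed diameter. The key input is that $\Gamma$ is strongly regular with parameters $(v^2, r(v-1), v+r^2-3r, r^2-r)$ and three distinct eigenvalues $r(v-1), v-r, -r$. Write $d$ for the diameter and $t\geq 2$ for the covering index of $\tilde{\Gamma}$.

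First I would bound $d$ from above. Lemma \ref{lem::imprimitive}(ii) gives $\mathrm{diam}(\overline{\tilde{\Gamma}}) = \lfloor d/2\rfloor$, and combining this with parts (iii)--(vi) of the same lemma, together with the fact that $\Gamma$ has diameter at most $2$, restricts $d$ to $\{3,4\}$ after a short case analysis that keeps track of when $\overline{\tilde{\Gamma}}$ is primitive, antipodal, or bipartite.

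Next I would handle the two cases separately. For $r=2$, where $\Gamma\cong K_v\times K_v$ is the lattice graph with eigenvalues $2(v-1), v-2, -2$, I would assume $d=3$ and apply Lemma \ref{lem::antipodal_DRG}(i): the cover's intersection array would be $\{k,(t-1)\mu,1;1,\mu,k\}$ with $k=2(v-1)$, and its spectrum would take the explicit form $\{k, \theta_1, -1, \theta_3\}$ with $\theta_{1,3}=(\lambda-\mu)/2\pm\delta$. Matching the non-trivial eigenvalues of $\tilde{\Gamma}$ to those of $\Gamma$ produces no consistent pair $(\mu,t)$, forcing $d=4$. For $r\geq 3$, I would exclude $d=4$ by writing the intersection array of a putative diameter-4 antipodal cover (distinguishing the bipartite case governed by Lemma \ref{lem::antipodal_DRG}(ii) from the non-bipartite case), computing the five eigenvalue multiplicities in terms of $r,v,t$, and then invoking integrality of these multiplicities together with the Krein non-negativity condition $q^k_{ij}\geq 0$ to reach a contradiction whenever $r\geq 3$. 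This would leave $d=3$ as the only possibility.

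The main obstacle will be this last step: establishing the integrality or Krein contradiction in the diameter-4 case for $r\geq 3$. The computation requires tracking the full eigenvalue structure of the cover through the parameters $r,v,t$, and identifying the specific inequality or divisibility constraint that uses $r\geq 3$ essentially. In my experience with such classifications, this is the delicate point: the diameter-4 array is consistent for $r=2$ (with $\Gamma$ being the lattice graph) and only fails once $r\geq 3$ drives the non-trivial eigenvalue $v-r$ away from a value compatible with the cover's spectral constraints.
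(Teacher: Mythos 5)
The paper does not prove this statement at all; it is quoted from Juri\v{s}i\'{c} \cite{Jur98}, where the argument is combinatorial, based on how the point-cliques and the $\mu$-graphs of the transversal design lift through the covering projection. Your proposal instead treats the cover as a distance-regular graph throughout (intersection arrays, eigenvalue multiplicities, Krein parameters), and this is where it breaks down.

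The lemma is a statement about arbitrary antipodal covers, not distance-regular ones, and under your standing assumption of distance-regularity the conclusion for $r\geq 3$ is not even attainable. If $\tilde{\Gamma}$ is a distance-regular antipodal cover of a diameter-$2$ graph, then by Lemma \ref{lem::imprimitive}(ii) its antipodal quotient has diameter $\lfloor d/2\rfloor=2$, so $d\in\{4,5\}$ --- not $\{3,4\}$ as you claim --- and $d=3$ is impossible, since by Lemma \ref{lem::imprimitive}(iii) a distance-regular antipodal graph of diameter $3$ has a \emph{complete} antipodal quotient, never a strongly regular one. The real content of the lemma for $r\geq 3$ is that \emph{every} antipodal cover, distance-regular or not, has diameter $3$; combined with the forced $d\in\{4,5\}$, this is precisely what rules out distance-regular antipodal covers of the line graph of $TD(r,p)$ in the proof of Theorem \ref{thm::main}. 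A spectral argument has no access to non-distance-regular covers, which carry no intersection array and no Krein parameters, so your framework cannot prove the statement as written. Even for the weaker assertion your method could in principle address (non-existence of distance-regular antipodal covers for $r\geq 3$), the proposal is incomplete: the case $d=5$ is never considered, the $r=2$, $d=3$ configuration you propose to eliminate via Lemma \ref{lem::antipodal_DRG}(i) is vacuous for the reason above, and the decisive feasibility contradiction for $d=4$, $r\geq 3$ is only announced as ``the main obstacle'' rather than derived. A correct proof must work directly with the distance structure of a general cover --- for instance by showing that for $r\geq 3$ the preimages of the point-cliques and the connectivity of the $\mu$-graphs force any two vertices in a common fibre to be at distance exactly $3$.
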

	
	Let $p$ be an odd prime. In \cite{ZLH23}, it was shown that  every distance-regular Cayley graph over $\mathbb{Z}_{p}\oplus\mathbb{Z}_{p}$  is  the line graph of a transversal design.
	
	\begin{lemma}[{\cite[Lemma 3.1]{ZLH23}}]\label{lem::1}
		Let $p$ be an odd prime, and let $\Gamma=\mathrm{Cay}(\mathbb{Z}_{p}\oplus\mathbb{Z}_{p},S)$ be a Cayley graph over $\mathbb{Z}_{p}\oplus\mathbb{Z}_{p}$. Then $\Gamma$ is  distance-regular if and only if $S=\cup_{i=1}^rH_i\setminus\{(0,0)\}$, where $2\leq r\leq p+1$, and $H_i$ ($i=1,\ldots,r$) are  subgroups of order $p$ in $\mathbb{Z}_{p}\oplus\mathbb{Z}_{p}$. In this situation, $\Gamma$ is isomorphic to the line graph of a transversal design $TD(r,p)$ when $r\leq p$, and to a complete graph when $r=p+1$. In particular, $\Gamma$ is primitive if and only if $2\leq r\leq p-1$ or $r=p+1$, and $\Gamma$ is imprimitive if and only if $r=p$, in which case  $\Gamma$ is the complete multipartite graph $K_{p\times p}$.
	\end{lemma}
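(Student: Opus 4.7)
My plan is to treat the two implications separately, with necessity further split by primitivity.

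\emph{Sufficiency.} When $r=p+1$ the $p+1$ order-$p$ subgroups of $\mathbb{Z}_p\oplus\mathbb{Z}_p$ partition $G\setminus\{(0,0)\}$, giving $\Gamma\cong K_{p^2}$. When $r=p$ the complement of $S\cup\{(0,0)\}$ equals the missing order-$p$ subgroup $H_{p+1}$, so the complementary Cayley graph is a disjoint union of $p$ copies of $K_p$ and $\Gamma\cong K_{p\times p}$. For $2\le r\le p-1$ I would construct a transversal design $TD(r,p)$ whose points are the $rp$ cosets of $H_1,\ldots,H_r$ (grouped into $r$ parallel classes of $p$ points each) and whose lines are the elements of $\mathbb{Z}_p^2$, with incidence by containment; the axioms follow from $H_i\cap H_j=\{(0,0)\}$ for $i\ne j$. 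Two elements $g,h\in\mathbb{Z}_p^2$ share a point precisely when $g-h\in S$, so $\Gamma$ coincides with the line graph of this $TD$, and the character sum $\chi_g(\underline{S})=\sum_{i=1}^{r}\chi_g(\underline{H_i})-r$ takes exactly the three values $r(p-1)$, $p-r$, and $-r$, recovering the stated strongly regular parameters.

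\emph{Necessity, imprimitive case.} Since $|\mathbb{Z}_p^2|$ is odd, there is no index-$2$ subgroup, so Lemma~\ref{lem::DRG_dq}(ii) rules out a bipartite $\Gamma$; hence Lemma~\ref{lem::imprimitive} makes $\Gamma$ antipodal with antipodal class $H$ a subgroup of order $p$. The quotient $\overline{\Gamma}$ is a distance-regular Cayley graph on $\mathbb{Z}_p^2/H\cong\mathbb{Z}_p$, so Theorem~\ref{thm::cir_DRG} leaves three candidates: $K_p$, $C_p$, and $P(p)$. The cycle option fails because it forces $\Gamma$ to be a connected $2$-regular graph on $p^2$ vertices, hence $C_{p^2}$, which is not a Cayley graph over the exponent-$p$ group $\mathbb{Z}_p^2$; Lemma~\ref{lem::confer} excludes Paley quotients. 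For $\overline{\Gamma}=K_p$ only $d=2$ is feasible: the alternative $d=3$ gives intersection array $\{p-1,\mu(p-1),1;1,\mu,p-1\}$, and non-negativity of $\lambda=p-2-\mu(p-1)$ forces $\mu=0$, contradicting $c_2\ge 1$. Thus $\Gamma\cong K_{p\times p}$, matching $r=p$.

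\emph{Necessity, primitive case.} Lemma~\ref{lem::Schur_DRG} makes $\mathcal{D}=\mathcal{D}_\mathbb{Z}(G,S)$ a primitive Schur ring over $\mathbb{Z}_p^2$. If $\Gamma\cong K_{p^2}$ then $r=p+1$. Otherwise my strategy is to show $\Gamma$ is strongly regular and integral, then invoke Lemma~\ref{lem::int_Cay}: the atoms of $\mathbb{B}(\mathcal{F}_{\mathbb{Z}_p^2})$ are exactly $\{(0,0)\}$ and the $p+1$ punctured order-$p$ subgroups, so integrality forces $S=\bigcup_{i\in I}H_i\setminus\{(0,0)\}$ for some $I\subseteq\{1,\ldots,p+1\}$ (with $|I|=p$ excluded by primitivity). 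Integrality of a strongly regular graph on $p^2$ vertices is automatic, because the only potentially irrational eigenvalues would be the conference-type values $(-1\pm\sqrt{p^2})/2=(-1\pm p)/2$, which are integers. The main obstacle is reducing to diameter $2$ in the primitive setting; I plan to extract this from the duality of Lemma~\ref{lem::dual_Schur_ring} combined with the Galois action $\sigma_a(\chi_g(\underline{S}))=\chi_{ag}(\underline{S})$ of $\mathrm{Gal}(\mathbb{Q}(\zeta_p)/\mathbb{Q})\cong\mathbb{F}_p^\times$ on eigenvalues, which packages Galois-conjugate eigenvalues into $\mathbb{F}_p^\times$-invariant unions of dual blocks $\widehat{S}_i=\{g:\chi_g(\underline{S})=\theta_i\}$, and then exploit the structure of primitive Schur rings over the elementary abelian group $\mathbb{Z}_p^2$ to conclude $d\le 2$.
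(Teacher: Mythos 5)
First, a point of comparison: the paper does not prove this lemma at all; it is imported verbatim from \cite{ZLH23} (Lemma 3.1 there), so there is no internal proof to measure your attempt against. Judged on its own terms, your sufficiency argument and your imprimitive case are essentially sound: the transversal design built from the cosets of $H_1,\ldots,H_r$ is correct, the character computation $\chi_g(\underline{S})\in\{r(p-1),\,p-r,\,-r\}$ does yield strong regularity, bipartiteness is rightly excluded because $p^2$ is odd, and the analysis of the antipodal quotient over $\mathbb{Z}_p^2/H\cong\mathbb{Z}_p$ via Theorem \ref{thm::cir_DRG}, Lemma \ref{lem::confer} and the infeasibility of the array $\{p-1,\mu(p-1),1;1,\mu,p-1\}$ correctly leaves only $K_{p\times p}$. (Two small glosses you should fill in: why the valency is at least $3$ before invoking Lemma \ref{lem::imprimitive}, and why the antipodal class has order exactly $p$ rather than $p^2$.)

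The genuine gap is the primitive case, and you have flagged it yourself: everything there hinges on first proving that a primitive distance-regular Cayley graph over $\mathbb{Z}_p\oplus\mathbb{Z}_p$ other than $K_{p^2}$ has diameter $2$, and for this you offer only a plan (``I plan to extract this from the duality \dots\ and then exploit the structure of primitive Schur rings \dots\ to conclude $d\le 2$''), not an argument. This cannot be waved through. Unlike every other group treated in this paper, $\mathbb{Z}_p\oplus\mathbb{Z}_p$ has no cyclic Sylow subgroup and is not of the form $\mathbb{Z}_{p^a}\oplus\mathbb{Z}_{p^b}$ with $a\neq b$, so neither Lemma \ref{lem::Schur ring} nor Lemma \ref{lem::Schur_dq} applies; indeed nontrivial primitive Schur rings over $\mathbb{Z}_p^2$ exist in abundance (the distance modules of the very graphs in item (iv) are examples), so there is no off-the-shelf primitivity theorem to ``exploit.'' The Galois action $\sigma_a(\chi_g(\underline{S}))=\chi_{ag}(\underline{S})$ only tells you that conjugate eigenvalues have equal multiplicities and that the dual classes $\widehat{S}_i$ are permuted by scalar multiplication; by itself it neither bounds $d$ nor yields integrality, and your integrality argument via conference parameters presupposes $d=2$, so it cannot be used to bootstrap the diameter reduction. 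Until you supply an actual proof that $d\le 2$ --- for instance via the direction results for $AG(2,p)$ of the kind the paper deploys in Lemma \ref{lem::key0}, or a genuine classification of $P$-polynomial Schur rings over $\mathbb{Z}_p^2$ --- the necessity direction is incomplete precisely at its hardest point.
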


	A \textit{clique} in a graph 
	$\Gamma$
	is a subgraph in which every pair of vertices are adjacent. A \textit{maximal clique} is a clique that cannot be extended by including an additional vertex that is adjacent to all its vertices.  The \textit{clique number} of $\Gamma$ is the cardinality of a clique of maximum size in $\Gamma$.

	Let $\mathbb{F}_p$ denote the finite field of order $p$ with $p$ being an odd prime. It is known that the Desarguesian affine plane $AG(2, p)$ can be identified with $\mathbb{F}_p^2$. Let $U = \{(a_j, b_j) \mid 1 \leq j \leq \ell\}$ be an $\ell$-subset of $\mathbb{F}_p^2$. We define  \[\mathrm{Dir}(U) = \left\{ \frac{b_j - b_k}{a_j - a_k} \mid 1 \leq j \neq k \leq \ell \right\}.\] Then the elements of $\mathrm{Dir}(U)$ are called the \textit{directions} determined by $U$. Let $W$ be a subset of $AG(2, p)$ with $1 < |W| \leq p$. According to \cite[Theorem 5.2]{S99}, $W$ determines either a single direction (in this case, $W$ is called \textit{collinear}) or satisfies the inequality
	\begin{equation}\label{Direction}
		|\mathrm{Dir}(W)| \geq \frac{|W| + 3}{2}.
	\end{equation}
	Note that $\mathbb{Z}_p \oplus \mathbb{Z}_p$ coincides with $\mathbb{F}_p^2$ as sets. For any subset $B$ of $\mathbb{Z}_p \oplus \mathbb{Z}_p$ with $(0, 0)\in B$, we see that $B$ is collinear if and only if $B$ is contained in some subgroup of order $p$ in $\mathbb{Z}_p \oplus \mathbb{Z}_p$. Moreover, if $K_1$ and $K_2$ are two distinct subgroups of order $p$ in $\mathbb{Z}_p \oplus \mathbb{Z}_p$, then $\mathrm{Dir}(K_1) \neq \mathrm{Dir}(K_2)$.

	\begin{lemma}\label{lem::key0}
		Let $p$ be an odd prime. Suppose that $\Gamma=\mathrm{Cay}(\mathbb{Z}_{p}\oplus\mathbb{Z}_{p},S)$ with $S=\cup_{i=1}^rH_i\setminus\{(0,0)\}$, where $2\leq r\leq p-1$, and $H_i$ ($i=1,\ldots,r$) are subgroups of order $p$ in $\mathbb{Z}_{p}\oplus\mathbb{Z}_{p}$.
		Then the following statements hold.
		\begin{enumerate}[(i)]
			\item The clique number of $\Gamma$ is equal to $p$.
			\item If $C$ is a non-collinear clique in $\Gamma$, then 
			$|C|\leq 2r-3$.
			\item If $\Gamma$ is the halved graph of a bipartite distance-regular graph $\Gamma'$, then for $\Gamma'$, we have $\mu=1$.
		\end{enumerate}
	\end{lemma}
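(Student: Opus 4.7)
My plan for (i) is to combine a direct lower bound with the Hoffman clique bound. Each $H_i$ is a $p$-clique in $\Gamma$ because any two of its elements differ by a non-identity element of $H_i \subseteq S$, giving $\omega(\Gamma) \geq p$. For the upper bound, I will use Lemma~\ref{lem::1} to identify $\Gamma$ with the line graph of $TD(r,p)$, so its smallest eigenvalue is $-r$ and valency is $r(p-1)$; the Hoffman bound then gives $\omega(\Gamma) \leq 1 - r(p-1)/(-r) = p$. For (ii), I would translate so that $(0,0) \in C$; then every non-zero difference $u - v$ with $u,v \in C$ lies in some $H_i$, so the line through $u$ and $v$ in $\mathrm{AG}(2,p)$ has the direction of $H_i$, whence $|\mathrm{Dir}(C)| \leq r$. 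If $C$ is non-collinear, inequality~\eqref{Direction} forces $(|C|+3)/2 \leq r$, i.e.\ $|C| \leq 2r-3$.

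For (iii), let $\Gamma'$ be a bipartite distance-regular graph with halved graph $\Gamma$, bipartition $V \cup V'$ with $V = V(\Gamma)$, valency $k'$, and $\mu' := c_2(\Gamma')$. By bipartite regularity $|V'| = |V| = p^2$. The key observation is that each neighbourhood $B_w := N_{\Gamma'}(w) \subseteq V$ (for $w \in V'$) is a $k'$-clique in $\Gamma$, since two vertices of $V$ are adjacent in $\Gamma$ iff they are at distance $2$ in $\Gamma'$. Two vertices of $V$ at distance $4$ in $\Gamma'$ share no common neighbour in $\Gamma'$ (by the triangle inequality), while two adjacent vertices of $\Gamma$ share exactly $\mu'$ common neighbours in $\Gamma'$. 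Double-counting pairs $(w,\{u,v\})$ with $u,v \in B_w$ then yields the fundamental identity
\begin{equation*}
k'(k'-1) \;=\; r(p-1)\,\mu'.
\end{equation*}

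I will then argue by contradiction: assume $\mu' \geq 2$. Combined with $k' \leq p$ from (i), the identity forces $p \geq 2r$. Next I split on the size of $k'$ relative to the non-collinearity threshold of (ii). In the ``large-clique'' case $k' \geq 2r - 2$, part (ii) forces every $B_w$ to be collinear, hence contained in a unique coset of some $H_i$; writing $b_C$ for the number of blocks contained in a fixed coset $C$, the pair-count $b_C \binom{k'}{2} = \binom{p}{2}\mu'$ within $C$ combined with the identity above produces $b_C = p/r$, which is not an integer because $p$ is prime and $2 \leq r \leq p-1$. In the ``small-clique'' case $k' \leq 2r - 3$, substituting into $k'(k'-1) \geq 2r(p-1)$ gives $(2r-3)(2r-4) \geq 2r(p-1)$, which simplifies to $p \leq 2r - 6 + 6/r$, contradicting $p \geq 2r$. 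Both cases being impossible forces $\mu' \leq 1$, and since any distance-regular graph of diameter $\geq 2$ has $\mu \geq 1$, we conclude $\mu' = 1$.

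The part I expect to be the real obstacle is spotting the right dichotomy on $k'$ at the threshold $2r-2$ and noticing that the large-clique branch collapses through the divisibility obstruction $r \mid p$ extracted from the per-coset block count; that obstruction is lethal precisely because $p$ is prime and $2 \leq r \leq p-1$. Once this is in hand, the small-clique branch reduces to routine algebra with the counting identity.
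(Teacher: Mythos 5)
Your proposal is correct. Parts (i) and (ii) coincide with the paper's argument: the ratio (Delsarte/Hoffman) bound with smallest eigenvalue $-r$ for the upper bound in (i), and the direction-counting inequality \eqref{Direction} for (ii). For (iii) you use the same fundamental identity $k'(k'-1)=r(p-1)\mu'$ (the paper cites \cite[Proposition 4.2.2]{BCN89}; you rederive it by double counting, which is fine) and the same contradiction in the ``small'' branch, where $k'\leq 2r-3$ together with $\mu'\geq 2$ forces $p<2r\leq p$. The two proofs diverge only in the other branch. The paper invokes \cite[Lemma 2]{H86} to know that some neighbourhood $S(v)$ is a \emph{maximal} clique of $\Gamma$ containing $(0,0)$; maximality upgrades ``collinear'' to ``equal to some $H_i$'', giving $k'=p$ and hence $r\mu'=p$, impossible for $2\leq r\leq p-1$. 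You instead avoid any appeal to maximality: when $k'\geq 2r-2$, part (ii) forces \emph{every} block $B_w$ to be collinear, and your per-coset incidence count $b_C\binom{k'}{2}=\binom{p}{2}\mu'$ yields $b_C=p/r\notin\mathbb{Z}$. Both arguments are sound; yours trades the external citation to Hemmeter's lemma for a short counting argument (note that $b_C>0$ is automatic, since $\mu'\geq 1$ and every pair in a coset of $H_i$ is at distance $2$ in $\Gamma'$), while the paper's is marginally shorter once the maximal-clique fact is granted.
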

	\begin{proof}
		(i) Clearly, $H_i$ is a clique of order $p$ in  $\Gamma$. Since $\Gamma$ has eigenvalues $r(p-1)$, $p-r$ and $-r$, the Delsarte bound (cf. \cite[Section 3.3.2]{D73}) implies that the clique number of $\Gamma$ is at most $1 - \frac{r(p-1)}{-r} = p$. Therefore, the clique number of $\Gamma$ is exactly  $p$.
		
		(ii) Since every pair of vertices in $C$ are adjacent, we assert that  the directions determined by $C$ are contained in the set $\{\mathrm{Dir}(H_i)\mid 1\leq i\leq r\}$, and hence  $r \geq |\mathrm{Dir}(C)|$. As $C$ is non-collinear, from \eqref{Direction} we obtain $\mathrm{Dir}(C) \geq (|C| + 3)/2$. Therefore, $|C|\leq 2r-3$.
		
		(iii) Suppose that $\Gamma$ is the halved graph of  a bipartite distance-regular graph $\Gamma'$. Let $k$ and $\mu$ denote the valency of $\Gamma'$ and the number of common neighbors of two vertices at distance two in $\Gamma'$, respectively. By contradiction, assume that $\mu\geq 2$. By \cite[Proposition 4.2.2]{BCN89},  we have
		\begin{equation}\label{k2-k}
			\frac{k^2-k}{\mu}=r(p-1).
		\end{equation}
		For any $v\in V(\Gamma')$, let $S(v)$ be the neighborhood of $v$ in $\Gamma'$. By {\cite[Lemma 2]{H86}}, $S(v)$ is a maximal clique in $\Gamma'^+$ or $\Gamma'^-$. Clearly, the  maximal cliques  $S(v)$, for $v\in V(\Gamma')$, must cover $\Gamma'^+$ and $\Gamma'^-$. Thus  there exists some vertex $v\in V(\Gamma')$ such that $C:=S(v)$ is a maximal clique in  $\Gamma$ containing the vertex $(0,0)$. According to (i), the clique number of $\Gamma$ is $p$, and $k=|S(v)|=|C|\leq p$. Thus we have $r\mu \leq p$ by \eqref{k2-k}. If $C$ is collinear, then $k=|C|=p$ because $C$ is a maximal clique. By \eqref{k2-k}, we have $r\mu=p$, and so $r=1$ or $p$, contrary to our assumption. If $C$ is non-collinear, then (ii) indicates that $k=|C|\leq 2r-3<2r$. Combining this with \eqref{k2-k} and  $\mu\geq 2$, we obtain $2r>p$, which is impossible because $p\geq \mu r\geq 2r$.
	\end{proof}

	%%%%%%%%%%%%%%%%%%%%%%%%%%%%%%%%	

	\section{Imprimitive distance-regular Cayley graphs with diameter three over abelian groups} \label{section::5}
	
	In this section, we present  some properties of imprimitive distance-regular Cayley graphs with diameter $3$ over abelian groups.

	It is known that an antipodal bipartite distance-regular graph with diameter $3$ is a complete bipartite graph without a perfect matching. Also, by \cite[Corollary 8.2.2]{BCN89}, every non-antipodal bipartite distance-regular graph with diameter $3$ is $Q$-polynomial. Therefore, by Lemma \ref{lem::dual imprimitivity} and Lemma \ref{lem::dual_graph}, we can deduce the following result  immediately.
	
	\begin{prop}\label{prop::non-antipodal_bipartite}
		Let $G$ be an abelian group. If $\Gamma$ is a non-antipodal bipartite distance-regular Cayley graph with diameter $3$ over $G$, then its dual graph $\widehat{\Gamma}$   is an antipodal non-bipartite distance-regular Cayley graph with diameter $3$ over  $G$.
	\end{prop}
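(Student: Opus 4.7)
The plan is to exploit the $P$/$Q$ duality between intersection numbers and Krein parameters, using the fact (cited from Corollary 8.2.2 of BCN89 in the passage) that every non-antipodal bipartite distance-regular graph with diameter $3$ is automatically $Q$-polynomial. Once $Q$-polynomiality is in hand, Lemma \ref{lem::dual_graph} hands us the dual graph $\widehat{\Gamma} = \mathrm{Cay}(G,\widehat{S}_{\tau(1)})$ as a distance-regular Cayley graph on $G$ of diameter $3$ whose intersection numbers are the Krein parameters $q_{i,j}^k$ of $\Gamma$. The remaining task is to transfer ``bipartite'' and ``antipodal'' across the duality in the correct directions.

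First I would set up notation: let $\theta_0 > \theta_1 > \theta_2 > \theta_3$ denote the distinct eigenvalues of $\Gamma$ and let $\widehat{S}_i = \{g \in G : \chi_g(\underline{S}) = \theta_i\}$, so that the primitive idempotents $E_i$ of the Bose--Mesner algebra are indexed by the sets $\widehat{S}_i$. Since $\Gamma$ is $Q$-polynomial, there exists a permutation $\tau$ of $\{0,1,2,3\}$ with $\tau(0)=0$ that realizes this ordering, and Lemma \ref{lem::dual_graph} applies to produce $\widehat{\Gamma}$ as a distance-regular Cayley graph of diameter $3$ over $G$ with $p_{i,j}^k(\widehat{\Gamma}) = q_{\tau(i),\tau(j)}^{\tau(k)}(\Gamma)$.

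Next I would translate the imprimitivity properties. By Lemma \ref{lem::dual imprimitivity}, since $\Gamma$ is a $Q$-polynomial distance-regular graph that is bipartite, it is $Q$-antipodal; and since $\Gamma$ is not antipodal, it is not $Q$-bipartite. The condition that $\Gamma$ is $Q$-antipodal says $q_{d,d}^k = 0$ for $k \notin \{0,d\}$ (with $d=3$), which via the identification of Krein parameters of $\Gamma$ with intersection numbers of $\widehat{\Gamma}$ becomes $p_{d,d}^k(\widehat{\Gamma}) = 0$ for $k \notin \{0,d\}$; this is precisely the condition that $\widehat{\Gamma}$ is antipodal (equivalent to $b_i = c_{d-i}$ for $i \neq \lfloor d/2 \rfloor$, as recorded in Subsection 2.4). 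Similarly, the failure of $Q$-bipartiteness for $\Gamma$ means there exist indices with $i+j+k$ odd and $q_{i,j}^k \neq 0$, which translates into $\widehat{\Gamma}$ not being bipartite.

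The argument is essentially a bookkeeping exercise, so the only subtlety I expect is making sure the permutation $\tau$ is handled consistently — in particular, verifying that $\tau$ preserves the property of being ``the last index'' $d=3$ and the parity pattern $i+j+k \pmod 2$ in the sense required by the definitions of antipodal and bipartite association schemes. This is standard for $Q$-polynomial schemes of small diameter, and for $d=3$ there is little room for pathology; after this check, combining the statement of Lemma \ref{lem::dual_graph} with the two translations above yields that $\widehat{\Gamma}$ is a distance-regular Cayley graph over $G$, of diameter $3$, antipodal and non-bipartite, which is exactly the claim.
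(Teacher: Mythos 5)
Your proposal is correct and follows essentially the same route as the paper: the authors likewise invoke \cite[Corollary 8.2.2]{BCN89} to get $Q$-polynomiality, then combine Lemma \ref{lem::dual imprimitivity} (bipartite $\Leftrightarrow$ $Q$-antipodal, antipodal $\Leftrightarrow$ $Q$-bipartite) with Lemma \ref{lem::dual_graph} to conclude that the dual graph is an antipodal non-bipartite distance-regular Cayley graph of diameter $3$. The paper states this deduction as ``immediate''; your write-up simply makes the bookkeeping with the ordering $\tau$ explicit.
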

	
	By Proposition \ref{prop::non-antipodal_bipartite} and the above arguments,  in order to study distance-regular Cayley graphs with diameter $3$ over abelian groups, the primary task is to consider those that are antipodal and non-bipartite.

	For the sake of convenience, we maintain the following notation throughout the remainder of this paper.

	{\flushleft \textbf{Notation.}} Let $G$ and $H$ be finite abelian groups under addition, and let  $G \oplus H$ denote the direct product of $G$ and $H$. For subsets $A \subseteq G$, $B \subseteq H$, and elements $g \in G$, $h \in H$, we define $g + A = \{g + a \mid a \in A\}$, $(g, B) = \{(g, b) \mid b \in B\}$,
	$(A, h) = \{(a, h) \mid a \in A\}$,
	and $(A, B) = \{(a, b) \mid a \in A, b \in B\}$.
	If $\Gamma=\mathrm{Cay}(G \oplus H,S)$ is a Cayley graph over $G \oplus H$, then the connection set $S$ can be  expressed as
	$$S = \cup_{h \in H} (R_h, h)=\cup_{g \in G} (g, L_g),$$
	where $R_h$  is a subset of $G$ such that $0_G \not\in R_{0_H}$ 
	and $R_h = -R_{-h}$ for all $h \in H$, and $L_g$ is a subset of $H$ such that $0_H\notin L_{0_G}$ and $L_g = -L_{-g}$ for all $g \in G$. Let 
	$R = \cup_{h\in H}R_h$ and $L = \cup_{g\in G}L_g$.
	Furthermore, if $\Gamma$ is distance-regular, then 
	we denote by $k$, $\lambda$, $\mu$ and $d$ the valency, the number of common neighbors of two adjacent vertices, the number of common neighbors of two vertices at distance $2$, and the diameter of $\Gamma$, respectively.
	
	Note that the set of irreducible characters of $G \oplus H$ can be represented as $\widehat{G \oplus H}=\{(\chi, \psi) \mid \chi \in \widehat{G}, \psi \in \widehat{H}\}$, where the pair $(\chi, \psi)$ is defined such that $(\chi, \psi)((g, h)) = \chi(g)\psi(h)$ for every $(g, h) \in G \oplus H$. 
	
	\begin{lemma}\label{lem::character_sum}
		Let $G$ and $H$ be finite abelian groups under addition, and let $S=\cup_{h\in H}(R_h,h)$ $=\cup_{g\in G}(g,L_g)$ be a subset of  $G\oplus H$, where $R_h\subseteq G$ for all $h\in H$ and $L_g\subseteq H$ for all $g\in G$. Then
		\begin{equation*}
			\chi(\underline{R_{0_H}})=\frac{1}{|H|}\sum_{\psi\in \widehat{H}}(\chi,\psi)(\underline{S})
		\end{equation*}
		for every $\chi \in \widehat{G}$, and 
		\begin{equation*}
			\psi(\underline{L_{0_G}})=\frac{1}{|G|}\sum_{\chi\in \widehat{G}}(\chi,\psi)(\underline{S})
		\end{equation*}
		for every $\psi\in \widehat{H}$.
	\end{lemma}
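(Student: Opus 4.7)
The plan is to prove both identities by a direct application of the orthogonality relations for characters of a finite abelian group, using the fact that $S$ naturally decomposes via its ``fibres'' over $H$ (respectively $G$).

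First I would recall the orthogonality relation for the dual group $\widehat{H}$: for any $h\in H$,
\[
\sum_{\psi\in\widehat{H}}\psi(h)=\begin{cases}|H|,&\text{if }h=0_H,\\0,&\text{otherwise.}\end{cases}
\]
This is the only nontrivial ingredient; everything else is a reordering of sums.

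Next I would expand the right-hand side of the first identity using the definition $(\chi,\psi)((g,h))=\chi(g)\psi(h)$ and the decomposition $S=\cup_{h\in H}(R_h,h)$. Concretely,
\[
\sum_{\psi\in\widehat{H}}(\chi,\psi)(\underline{S})
=\sum_{\psi\in\widehat{H}}\sum_{h\in H}\sum_{g\in R_h}\chi(g)\psi(h)
=\sum_{h\in H}\Bigl(\sum_{g\in R_h}\chi(g)\Bigr)\sum_{\psi\in\widehat{H}}\psi(h).
\]
By the orthogonality relation above, the inner sum over $\widehat{H}$ equals $|H|$ when $h=0_H$ and $0$ otherwise. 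Therefore only the $h=0_H$ term contributes, giving
\[
\sum_{\psi\in\widehat{H}}(\chi,\psi)(\underline{S})=|H|\sum_{g\in R_{0_H}}\chi(g)=|H|\,\chi(\underline{R_{0_H}}),
\]
which rearranges to the first claimed formula.

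The second identity is proved in exactly the same way, but with the roles of $G$ and $H$ interchanged, using the decomposition $S=\cup_{g\in G}(g,L_g)$ and orthogonality over $\widehat{G}$. There is no real obstacle here; the whole argument is a one-line character orthogonality computation, so the only thing to be careful about is the bookkeeping that the two decompositions of $S$ are compatible and that $R_h\subseteq G$, $L_g\subseteq H$ are genuinely indexed as asserted, which is immediate from how $S$ is written.
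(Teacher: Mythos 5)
Your proof is correct and follows exactly the same route as the paper: expand $\underline{S}=\sum_{h\in H}\underline{(R_h,h)}$, apply the character orthogonality relation $\sum_{\psi\in\widehat{H}}\psi(h)=|H|\delta_{h,0_H}$ after swapping the order of summation, and obtain the second identity by symmetry. No gaps.
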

	
	\begin{proof}
		By symmetry, we only need to prove the first part of the lemma. Note that 
		\begin{equation*}
			\sum_{\psi \in \widehat{H}} \psi(h) = 
			\begin{cases} 
				|H|, & \mbox{if}~h = 0_H; \\
				0, & \mbox{otherwise}.
			\end{cases}
		\end{equation*}
		For every $\chi \in \widehat{G}$, we have
		\begin{equation*}
			\begin{aligned}
				\sum_{\psi\in \widehat{H}}(\chi,\psi)(\underline{S})=&\sum_{\psi\in \widehat{H}}(\chi,\psi)\left(\sum_{h\in H}\underline{(R_h,h)}\right)
				=\sum_{\psi\in \widehat{H}}\left(\sum_{h\in H}\chi(\underline{R_h})\psi(h)\right)\\
				=&\sum_{h\in H}\left(\sum_{\psi\in \widehat{H}}\psi(h)\right)\chi(\underline{R_h})
				=|H|\cdot\chi(\underline{R_{0_H}}),
			\end{aligned}
		\end{equation*}
		and the result follows.
	\end{proof}
	
	\begin{prop}\label{prop::prime_r_antipodal}
		Let $G$ be an abelian group, and let $\Gamma$ be an $r$-fold antipodal non-bipartite distance-regular Cayley graph with diameter $3$ over  $G$. If $r$ is prime, then $G\cong M \oplus \mathbb{Z}_r$ and $\Gamma$ is isomorphic to a Cayley graph over $M \oplus \mathbb{Z}_r$ in which the antipodal class containing the identity vertex is $S_0 \cup S_3 = (0_M,\mathbb{Z}_r)$, where $M$ is an abelian group of order $|G|/r$.
	\end{prop}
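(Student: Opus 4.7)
First, by Lemma \ref{lem::DRG_dq}(i), $H:=S_0\cup S_3$ is a subgroup of $G$; since $|H|=r$ is prime, $H\cong\mathbb{Z}_r$, say $H=\langle h_0\rangle$. Once we establish that $H$ is a direct summand of $G$, equivalently that $G\cong M\oplus\mathbb{Z}_r$ for some abelian group $M$ of order $|G|/r$, the identification of the antipodal class of the identity vertex with $(0_M,\mathbb{Z}_r)$ is automatic, so the content of the proposition reduces to proving this splitting.

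To set up, I would apply Corollary \ref{cor::DR} with the intersection array $\{k,\mu(r-1),1;1,\mu,k\}$. Since $b_2=1$, $c_3=k$, and $a_3=0$, the distance-regular relations give $\underline{S}\cdot\underline{S_3}=\underline{S_2}$ in $\mathbb{Z}[G]$, which together with $\underline{H}=e+\underline{S_3}$ and $\underline{S}+\underline{S_2}=\underline{G}-\underline{H}$ yields the key identity $\underline{H}\cdot(\underline{S}+e)=\underline{G}$. Reading off coefficients, this says that $S$ is a symmetric complete transversal of the non-identity cosets of $H$ in $G$, containing exactly one element per coset. Since $H$ has prime order, the standard splitting criterion for finite abelian groups tells us that $H$ is a direct summand of $G$ if and only if $h_0\notin rG$; the task is thus to rule out $h_0\in rG$.

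I would argue by contradiction and suppose $h_0=rg_0$ for some $g_0\in G$. Then $g_0$ has order exactly $r^2$, and $Z:=\langle g_0\rangle\cong\mathbb{Z}_{r^2}$ is a cyclic subgroup of $G$ containing $H$ as its unique subgroup of order $r$. Restricting the transversal identity to the subring $\mathbb{Z}[Z]\subseteq\mathbb{Z}[G]$ yields $\underline{H}\cdot(\underline{D}+e)=\underline{Z}$, where $D:=S\cap Z$ is a symmetric $(r-1)$-element transversal of the non-identity cosets of $H$ in $Z$.

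The main obstacle is to turn the restricted structure on $Z$ into a contradiction. The plan is to exploit characters: by abelian character extension, any faithful character of $Z$ (necessarily of order $r^2$) extends to a character $\chi$ of $G$, and any such $\chi$ is non-trivial on $H$, so Lemma \ref{lem::antipodal_DRG}(i) gives $\chi(\underline{S})\in\{\theta_1,\theta_3\}$. Combined with the Galois action $\zeta_{r^2}\mapsto\zeta_{r^2}^a$ for $\gcd(a,r)=1$, which permutes the order-$r^2$ characters and sends $\chi(\underline{S})$ to another element of $\{\theta_1,\theta_3\}$, one can extract strong algebraic constraints on $\underline{S}$ and $\underline{D}$. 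Invoking Lemma \ref{lem::Ma} with $p=r$ on the cyclic Sylow $r$-subgroup of $Z$ (or alternatively interpreting $\underline{D}$ on $Z\cong\mathbb{Z}_{r^2}$ as a cyclic polynomial addition set of forbidden form and applying Lemma \ref{lem::PAS1}) should then force a numerical incompatibility with the parameters $(k,\lambda,\mu,r)$ of the intersection array. This contradiction yields $h_0\notin rG$, so $H$ splits off $G$ and the proposition follows.
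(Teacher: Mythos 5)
Your reduction is sound and in fact parallels the first half of the paper's argument: the paper writes $G=K\oplus\mathbb{Z}_{r^{s_1}}\oplus\cdots\oplus\mathbb{Z}_{r^{s_t}}$ and constructs an explicit automorphism moving $H=S_0\cup S_3$ into a single cyclic factor $\mathbb{Z}_{r^{s_l}}$, after which the proposition amounts to $s_l=1$; your criterion that $H$ splits off if and only if $h_0\notin rG$ is an equivalent and slightly cleaner packaging. The identity $\underline{H}\cdot(\underline{S}+e)=\underline{G}$ and its restriction to $Z=\langle g_0\rangle\cong\mathbb{Z}_{r^2}$ are legitimate, and for $r=2$ this restriction already finishes the job: a symmetric one-element transversal of the nontrivial coset of $H$ in $\mathbb{Z}_4$ would be an involution outside $H$, which $\mathbb{Z}_4$ does not possess.

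The gap is that the heart of the proof --- ruling out $h_0\in rG$ for odd $r$ --- is left as a plan, and the tools you name do not obviously deliver it. The combinatorial structure of $D=S\cap Z$ alone is not contradictory (e.g.\ $D=\{1,8\}\subseteq\mathbb{Z}_9$ is a symmetric transversal of the nontrivial cosets of $\{0,3,6\}$), so everything must come from the eigenvalue constraints; but you have not exhibited an element $\underline{Y}$ with the congruence $\chi(\underline{Y})\equiv 0\pmod{r^a}$ that Lemma~\ref{lem::Ma} requires, nor any identity $f(\underline{D})=m\underline{Z}$ that would make $D$ a polynomial addition set amenable to Lemma~\ref{lem::PAS1} (the polynomial addition set that genuinely arises here, in Proposition~\ref{prop::an1}, lives in the complementary factor $M$, not in $Z$). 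The paper instead splits into two cases for odd $r$. When $\lambda=\mu$ only counting is needed: $k+1=|G|/r=\mu r+2$, while $h_0\in rG$ forces $r^2\mid |G|$ and hence $r\mid k+1$, so $r\mid 2$. When $\lambda\neq\mu$ it uses that $\Gamma$ is integral (Lemma~\ref{lem::antipodal_DRG}), deduces via Lemma~\ref{lem::character_sum} that the trace of $S$ on the cyclic factor has integer character values, applies Lemma~\ref{lem::int_Cay} to write that trace as a union of atoms of the Boolean algebra of subgroups, and plays this off against the transversal property and a gcd computation. You would need to supply arguments of comparable substance, or show concretely how your Galois/character plan produces the required congruences, before the proposal constitutes a proof.
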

	\begin{proof}
		Since $r$ is a prime divisor of $|G|$, we can  express $G$ as
		\begin{equation*}
			G = K\oplus \mathbb{Z}_{r^{s_1}} \oplus \mathbb{Z}_{r^{s_2}} \oplus \cdots \oplus \mathbb{Z}_{r^{s_t}},
		\end{equation*}
		where $s_1\geq s_2\geq\cdots\geq s_t$ and $r\nmid |K|$.  Let $H$ denote the antipodal class of $G$ containing the identity vertex, that is, $H=S_0\cup S_3$.  Since $|H|=r$ is prime and $K$ contains no elements of order $r$, we claim that every element of $S_3$ is of the form $(0_K,a_1,\ldots,a_t)$ with  $a_i\in \mathbb{Z}_{r^{s_i}}$ and $r^{s_i-1}\mid a_i$ for $1\leq i\leq t$. Thus there exists some $l\in \{1,\ldots,t\}$ such that $b=(0_K,b_1,\ldots,b_l=r^{s_l-1},0,\ldots,0)\in S_3$ with $r^{s_i-1}\mid b_i$ for $1\leq i\leq l$. Then  $H=\langle b\rangle$ because $|H|=r$ is prime. Let $\sigma$ be the mapping on $G$ defined by letting \[\sigma((k,i_1,\ldots,i_t)) = \left(k,i_1-\frac{b_1}{r^{s_l-1}}i_l,\ldots,i_{l-1}-\frac{b_{l-1}}{r^{s_l-1}}i_l,i_l,\ldots,i_t\right)\] for all $(k,i_1,\ldots,i_t)\in G=K\oplus \mathbb{Z}_{r^{s_1}} \oplus \mathbb{Z}_{r^{s_2}} \oplus \cdots \oplus \mathbb{Z}_{r^{s_t}}$. Clearly,  $\sigma$ is an automorphism of $G$, and $\sigma(b) = (0_K,0,\ldots,0,r^{s_l-1},0,\ldots,0)$. Then  $G\cong M\oplus \mathbb{Z}_m$, where $m=r^{s_l}$ and $M=K\oplus(\oplus_{i=1,i\neq l}^t\mathbb{Z}_{r^{s_i}})$, and $\Gamma$ is isomorphic to a Cayley graph over $M\oplus \mathbb{Z}_m$ in which the  antipodal class containing the identity vertex is $S_0\cup S_3= (0_M,\frac{m}{r}\mathbb{Z}_m)$. Therefore, it suffices to prove  $m=r$. We consider the following three cases. 
		
		{\flushleft \bf Case A.} $r=2$.
		
		In this situation, $S_3=\{(0_M,\frac{m}{2})\}$. Assume that $4\mid m$. If $(0_M,\frac{m}{4})\in S_1$, then $(0_M,-\frac{m}{4})=(0_M,\frac{m}{2})+(0_M,\frac{m}{4})\in S_2$, which is impossible due to $(0_M,-\frac{m}{4})\in -S_1=S_1$.  If $(0_M,\frac{m}{4})\in S_2$, then $(0_M,\frac{m}{4})$ and $(0_M,\frac{m}{2})$ are adjacent, and hence $(0_M,\frac{m}{4})=(0_M,\frac{m}{2})-(0_M,\frac{m}{4})\in S_1$, a contradiction. Thus we have $(0_M,\frac{m}{4})\notin S_1\cup S_2\cup S_3$, which is impossible. Therefore, we conclude that $m=2=r$, as desired.
		
		{\flushleft \bf Case B.} $r\neq2$ and $\lambda=\mu$.
		
		By Lemma \ref{lem::antipodal_DRG}, we have $\frac{m|M|}{r}-1=k=1+r\mu$, and hence $ \frac{m|M|}{r}\equiv2 ~(\mathrm{mod}~r)$.
		As $r\neq 2$, we assert that $m=r$, as required.
		
		{\flushleft \bf Case C.} $r\neq2$ and $\lambda\neq\mu$.
		
		In this case, by Lemma \ref{lem::antipodal_DRG}, $\Gamma$ is integral, and so $(\chi,\psi)(\underline{S})\in \mathbb{Z}$ for all $(\chi,\psi)\in \widehat{M\oplus \mathbb{Z}_m}$.
		By Lemma \ref{lem::character_sum}, for any $\psi\in \widehat{\mathbb{Z}_m}$, we have
		$$
		\psi(\underline{L_{0_M}})=\frac{1}{|M|}\sum_{\chi\in \widehat{M}}(\chi,\psi)(\underline{S})\in \mathbb{Q},
		$$
		and hence  $\psi(\underline{L_{0_M}})\in \mathbb{Z}$ because it is an algebraic integer. Note that $\{\psi(\underline{L_{0_M}})\mid \psi\in \widehat{\mathbb{Z}_m}\}$ gives a complete set of eigenvalues of the Cayley graph $\mathrm{Cay}(\mathbb{Z}_m,L_{0_M})$. Hence, by Lemma \ref{lem::int_Cay}, $L_{0_M}$ is a union of some atoms for $\mathbb{B}(\mathcal{F}_{\mathbb{Z}_m})$. On the other hand, by Corollary \ref{cor::DR}, 
		\begin{equation*}
			\underline{(0_M,\frac{m}{r}\mathbb{Z}_m)}\cdot\underline{S\cup\{(0_M,0)\}}=\underline{M\oplus\mathbb{Z}_m},
		\end{equation*}
		and it follows that
		\begin{equation*}
			\underline{(0_M,L_{0_M}\cup\{0\})}\cdot \underline{(0_M,\frac{m}{r}\mathbb{Z}_m)}=\underline{(0_M,\mathbb{Z}_m)},
		\end{equation*}
		or equivalently,
		\begin{equation*}
			\underline{L_{0_M}\cup\{0\}}\cdot \underline{\frac{m}{r}\mathbb{Z}_m}=\underline{\mathbb{Z}_m}.
		\end{equation*}
		This implies that  $L_{0_M}\cup\{0\}$ contains exactly one element from each  coset of $\frac{m}{r}\mathbb{Z}_m$ in $\mathbb{Z}_m$. Then there exists an element $a\in L_{0_M}$ such that  $(1+\frac{m}{r}\mathbb{Z}_m)\cap L_{0_M}=\{a\}$. If  $1+\frac{m}{r}\mathbb{Z}_m\subseteq\mathbb{Z}_m^*$, then $a\in\mathbb{Z}_m^*$. Since $L_{0_M}$ is a union of some atoms for $\mathbb{B}(\mathcal{F}_{\mathbb{Z}_m})$ and $\mathbb{Z}_m^*$ is exactly an atom, we assert that  $1+\frac{m}{r}\mathbb{Z}_m\subseteq\mathbb{Z}_m^*\subseteq L_{0_M}$. Thus $1+\frac{m}{r}\mathbb{Z}_m=\{a\}$, and it follows that $m=r$, as desired. If $1+\frac{m}{r}\mathbb{Z}_m\not\subseteq\mathbb{Z}_m^*$, then there exists some  $i\in \mathbb{Z}_m$ such that $1+i\frac{m}{r}\notin \mathbb{Z}_m^*$. Combining this with $\mathrm{gcd}(1+i\frac{m}{r},\frac{m}{r})=1$, we obtain $\mathrm{gcd}(1+i\frac{m}{r},m)=r$, which gives that $m=r$ because $m$ is a power of $r$. The result follows.	
	\end{proof}
	
	\begin{prop}\label{prop::an1}
		
		Let $G$  be an abelian group, and let $p$ be a prime. Assume that $\Gamma = \mathrm{Cay}(G \oplus \mathbb{Z}_p, S)$ is an antipodal non-bipartite distance-regular Cayley graph with diameter $3$ in which the antipodal class containing the identity vertex is $S_0\cup S_3 = (0_G, \mathbb{Z}_p)$. Let $k\geq \theta_1\geq -1 \geq \theta_3$ be all distinct eigenvalues of $\Gamma$ and $2\delta=\theta_1-\theta_3$.
		Then the following statements hold.
		\begin{enumerate}[(i)]
			\item The sets $R_i$, for $i \in \mathbb{Z}_p$, form a partition of $G \setminus \{0_G\}$.
			\item If $p>2$, then $\frac{|G|}{2\delta}$, $\theta_1$ and $-\theta_3$ are positive integers. Moreover, for every non-trivial character $\psi \in \widehat{\mathbb{Z}_p}$, the set $B=\{g\in G\mid (\chi_g,\psi)(\underline{S})=\theta_1\}$ is a $(|G|, -\frac{|G|}{2\delta}\theta_3, x^p - \frac{|G|^p}{(2\delta)^p})$-polynomial addition set such that
			\begin{equation*}
				\chi_l(\underline{B}) = \frac{|G|}{2\delta}\left( \sum_{i \in \mathbb{Z}_p} \psi(i) a_{-l}(\underline{R_i}) - \theta_3 \cdot a_{-l}(0_G) \right)~\mbox{for all $l\in G$}.
			\end{equation*}
			\item If $p=2$, then there exists a strongly regular Cayley graph over $G$ with parameters $(|G|,\frac{|G|}{2\delta}\theta, \frac{|G|}{4\delta^2}(\theta^2-1),\frac{|G|}{4\delta^2}(\theta^2-1))$, where $\theta=\theta_1$ or $-\theta_3$.  
		\end{enumerate}
	\end{prop}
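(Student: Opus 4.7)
For part (i), I would apply Lemma \ref{lem::imprimitive}(ii) with $d=3$: the antipodal quotient $\overline{\Gamma}$ then has diameter $\lfloor 3/2\rfloor=1$, hence equals $K_{|G|}$, and Lemma \ref{lem::DRG_dq}(i) identifies its connection set with $G\setminus\{0_G\}$, giving $\bigcup_i R_i=G\setminus\{0_G\}$. Disjointness is a pure diameter argument: if $g\in R_i\cap R_j$ with $i\neq j$, then $(g,i)$ and $(g,j)$ are both adjacent to $(0_G,0)$, yet they differ by $(0_G,j-i)\in S_3$ and so lie at distance $3$, contradicting the 2-path through the identity.

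For part (ii), the starting observation is that the eigenvalues $k$ and $-1$ are realized only by characters trivial on $\{0_G\}\oplus\mathbb{Z}_p$ (those lifted from $\overline{\Gamma}\cong K_{|G|}$), so for every non-trivial $\psi\in\widehat{\mathbb{Z}_p}$ and every $g\in G$ one has $(\chi_g,\psi)(\underline{S})\in\{\theta_1,\theta_3\}$. Consequently $a_g(\underline{B})=\frac{(\chi_g,\psi)(\underline{S})-\theta_3}{2\delta}$, and Fourier inversion against $\chi_l$---using $\chi_l(g)\chi_g(r)=\chi_{l+r}(g)$ together with the partition from (i)---yields the formula for $\chi_l(\underline{B})$ stated in the proposition. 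In particular $\chi_l(\underline{B})=\frac{|G|}{2\delta}\psi(i_l)$ for $l\neq 0_G$ (where $-l\in R_{i_l}$) and $\chi_0(\underline{B})=-\frac{|G|\theta_3}{2\delta}=|B|$. Raising to the $p$-th power collapses the roots of unity: $\chi_l(\underline{B}^p)=(|G|/(2\delta))^p$ for $l\neq 0_G$ and $(|G|/(2\delta))^p(-\theta_3)^p$ for $l=0_G$. Since any element of $\mathbb{C}G$ with constant character value on $\widehat{G}\setminus\{\chi_0\}$ has the shape $ce+m\underline{G}$, one obtains $\underline{B}^p=(|G|/(2\delta))^pe+m\underline{G}$ in $\mathbb{C}G$; because the left side lies in $\mathbb{Z}G$, both $(|G|/(2\delta))^p$ and $m$ must be integers, which is precisely the polynomial addition set identity.

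The step I expect to be the main obstacle is upgrading $(|G|/(2\delta))^p\in\mathbb{Z}$ to $|G|/(2\delta)\in\mathbb{Z}$ and deducing integrality of $\theta_1,-\theta_3$. When $\lambda\neq\mu$, Lemma \ref{lem::antipodal_DRG}(i) already supplies $\theta_1,\theta_3,2\delta\in\mathbb{Z}$, so $|G|/(2\delta)$ is rational and the elementary observation that a rational $p$-th root of an integer is an integer (by coprimality in a reduced fraction) concludes this case. The essential case is $\lambda=\mu$, where $\delta=\sqrt{k}$ is a priori only an algebraic integer, and this is exactly where the hypothesis $p>2$ is used: for an odd prime $p$, one computes $(|G|/(2\delta))^p=\frac{|G|^p}{2^pk^{(p+1)/2}}\sqrt{k}$, a nonzero rational multiple of $\sqrt{k}$; its integrality therefore forces $\sqrt{k}\in\mathbb{Q}$, hence $\sqrt{k}\in\mathbb{Z}$, so $\theta_1=\delta=-\theta_3\in\mathbb{Z}^+$, and the previous argument again gives $|G|/(2\delta)\in\mathbb{Z}$.

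For part (iii), the character formula derived in (ii) remains valid when $p=2$ even though the integrality conclusions of (ii) need not. With $\psi_1$ the unique non-trivial character of $\mathbb{Z}_2$, the spectrum of $\mathrm{Cay}(G,B)$ consists of the three distinct values $|B|,\,|G|/(2\delta),\,-|G|/(2\delta)$, the distinctness using $-\theta_3>1$; passing to $G\setminus B$ when $0_G\in B$ yields a genuine connection set. The advertised parameters $(|G|,|G|\theta/(2\delta),|G|(\theta^2-1)/(4\delta^2),|G|(\theta^2-1)/(4\delta^2))$ with $\theta\in\{\theta_1,-\theta_3\}$ then follow from the standard strongly regular relations $r+s=\lambda-\mu$ and $rs=\mu-k$ applied with $r,s=\pm|G|/(2\delta)$ and $k=|G|\theta/(2\delta)$, simplified using the fact that $\theta_1$ and $-\theta_3$ are the two roots of $x^2-2\delta x+(|G|-1)$.
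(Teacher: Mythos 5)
Your proposal is correct and follows essentially the same route as the paper: identify the eigenvalue dichotomy $(\chi_g,\psi)(\underline{S})\in\{\theta_1,\theta_3\}$ for non-trivial $\psi$, Fourier-invert to get $\chi_l(\underline{B})$, raise to the $p$-th power to obtain $\underline{B}^p=\bigl(\tfrac{|G|}{2\delta}\bigr)^p e+m\underline{G}$, and extract the integrality and polynomial-addition-set conclusions (resp.\ the strongly regular graph for $p=2$). The only differences are local and harmless: you derive part (i) from the antipodal quotient plus a distance argument rather than from the identity $(\underline{S_3}+e)(\underline{S}+e)=\underline{G}$, you obtain the dichotomy by a multiplicity count instead of applying $(\chi_g,\psi)$ to $\underline{S}^2=k e+(\lambda-\mu)\underline{S}+\mu(\underline{G\oplus\mathbb{Z}_p}-\underline{(0_G,\mathbb{Z}_p)})$, and your $\lambda=\mu$ versus $\lambda\neq\mu$ case split actually spells out the integrality step more carefully than the paper does.
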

	\begin{proof}
		(i)	By Corollary \ref{cor::DR}, we have
		\begin{equation*}
			\underline{G\oplus \mathbb{Z}_p}=\underline{(0_G,\mathbb{Z}_p)}\cdot\left(\sum_{i\in \mathbb{Z}_p}\underline{(R_{i},i)}+e\right)=\sum_{i\in \mathbb{Z}_p}\underline{(R_{i},\mathbb{Z}_p)}+\underline{(0_G,\mathbb{Z}_p)}.
		\end{equation*}
		Therefore, the sets $R_i$ ($i \in \mathbb{Z}_p$) form a partition of $G \setminus \{0_G\}$. 
		
		(ii) Again by Corollary \ref{cor::DR},   
		\begin{equation}\label{S}
			\underline{S}^2=k\cdot e+(\lambda-\mu) \underline{S}+\mu (\underline{G\oplus \mathbb{Z}_p}-\underline{(0_G,\mathbb{Z}_p)}).
		\end{equation} 
		Let $\psi\in \widehat{\mathbb{Z}_p}$ be a non-trivial character of $\mathbb{Z}_p$. We have $\psi(\underline{\mathbb{Z}_p})=0$. For any $g\in G$, let $\chi_g\in \widehat{G}$ be the character of $G$ defined in \eqref{equ::char_Ab}. Then $(\chi_g,\psi)$ is a non-trivial character of $\widehat{G\oplus \mathbb{Z}_p}$, and so $(\chi_g,\psi)(\underline{\widehat{G\oplus \mathbb{Z}_p}})=\chi_g(\underline{G})\cdot \psi(\underline{\mathbb{Z}_p})=0$.  By applying $(\chi_g,\psi)$ on both sides of \eqref{S}, we obtain 
		\begin{equation*}
			((\chi_g,\psi)(\underline{S}))^2=k+(\lambda-\mu) (\chi_g,\psi)(\underline{S}),
		\end{equation*}
		which implies that    $(\chi_g,\psi)(\underline{S})=\theta_1$ or $\theta_3$ for all $g\in G$ according to Lemma \ref{lem::antipodal_DRG}. Then   
		\begin{equation*}
			\sum_{g\in G}(\chi_g,\psi)(\underline{S})\cdot g	=\theta_1\underline{B}+\theta_3\underline{G\setminus B}
			=2\delta\underline{B}+\theta_3\underline{G}.
		\end{equation*}
		Since $\underline{S}=\sum_{i\in \mathbb{Z}_p}\underline{(R_i,i)}$, we have 
		\begin{equation}\label{S1}	2\delta\underline{B}+\theta_3\underline{G}=\sum_{g\in G}\left(\sum_{i\in \mathbb{Z}_p}\psi(i)\chi_{g}(\underline{R_i})\right)g.
		\end{equation}
		Let $l\in G$. By applying the character $\chi_{l}\in \widehat{G}$ on both sides of \eqref{S1}, we get 	\begin{equation}\label{S2}
			\begin{aligned}
				2\delta\chi_{l}(\underline{B})+\theta_3\chi_{l}(\underline{G})=&\sum_{g\in G}\left(\sum_{i\in \mathbb{Z}_p}\psi(i)\chi_{g}(\underline{R_i})\right)\chi_{l}(g)=\sum_{g\in G}\left(\sum_{i\in \mathbb{Z}_p}\psi(i)\chi_{g}(\underline{R_i})\right)\chi_{g}(l)\\
				=&\sum_{i\in \mathbb{Z}_p}\psi(i) \left(\sum_{g\in G}\chi_{g}(\underline{R_i})\chi_{g}(l)\right)=\sum_{i\in \mathbb{Z}_p}\psi(i)\cdot |G|a_{-l}(\underline{R_i}),
			\end{aligned}
		\end{equation} 
		where the last equality follows from the Fourier inversion formula \eqref{equ::FI}.
		Note that $\chi_l(\underline{G})=|G|$ if $l=0_G$, and $\chi_l(\underline{G})=0$ otherwise. By  \eqref{S2}, we obtain   	
		\begin{equation}
			\label{c1}
			\chi_l(\underline{B}) = \frac{|G|}{2\delta}\left( \sum_{i \in \mathbb{Z}_p} \psi(i) a_{-l}(\underline{R_i}) - \theta_3 \cdot a_{-l}(0_G) \right),
		\end{equation}	
		and it follows that $|B|=-\frac{|G|}{2\delta}\theta_3>0$. Recall that the sets $R_i$, for $i\in \mathbb{Z}_p$, form a partition of $G\setminus\{0_G\}$. Then from \eqref{c1} we can deduce that
		\begin{equation*}
			\begin{aligned}
				\chi_{l}(\underline{B}^p)&=\chi_{l}(\underline{B})^p
				=\frac{|G|^p}{(2\delta)^p}\left(\sum_{i\in \mathbb{Z}_p}a_{-l}(\underline{R_i})+(-\theta_3)^p\cdot a_{-l}(0_G)\right)\\
				&=\frac{|G|^p}{(2\delta)^p}\left(1+((-\theta_3)^p-1)\cdot a_{-l}(0_G)\right).
			\end{aligned}	
		\end{equation*}
		Using the Fourier inversion formula \eqref{equ::FI}, we get 
		\begin{equation}\label{equ::Bm}
			\underline{B}^p=\frac{|G|^{p-1}}{(2\delta)^p}(((-\theta_3)^p-1)\cdot\underline{G}+|G|\cdot 0_G).
		\end{equation}
		By Lemma \ref{lem::antipodal_DRG}, $(2\delta)^2=4k+(\lambda-\mu)^2\in \mathbb{Z}$. As $p$ is odd and $\underline{B}^p\in \mathbb{Z}G$, from \eqref{equ::Bm} and $(2\delta)^2\in \mathbb{Z}$ we can deduce that $\frac{|G|}{2\delta}$ and $2\delta$ are integers, and so are $\theta_1$ and $\theta_3$. Moreover, again by  \eqref{equ::Bm}, we assert that $B$ is a $(|G|,-\frac{|G|}{2\delta}\theta_3,x^p-\frac{|G|^p}{(2\delta)^p})$-polynomial addition set in $G$. 
		
		(iii) If $p=2$,  then $\mathbb{Z}_2$ has only one non-trivial character, namely $\psi_1$, where $\psi _1(0)=1$ and $\psi _1(1)=-1$. By substituting $\psi=\psi_1$ in \eqref{c1}, we obtain
		\begin{equation*}
			\chi_l(\underline{B})=\frac{|G|}{2\delta}\left(a_{-l}(\underline{R_0})-a_{-l}(\underline{R_1}) - \theta_3 \cdot a_{-l}(0_G) \right) ~\mbox{for  all $l\in G$},
		\end{equation*} 
		which implies $-B=B$. Moreover, by \eqref{equ::Bm}, we have
		\begin{equation}\label{equ::Str_Reg_1}
			\underline{B}^2=\frac{|G|}{(2\delta)^2}((\theta_3^2-1)\cdot\underline{G}+|G|\cdot 0_G).
		\end{equation}
		If $0_G\notin B$, then \eqref{equ::Str_Reg_1} indicates that the Cayley graph $\mathrm{Cay}(G,B)$ is with diameter $d\leq 2$.
		If $d=1$, then $B=G\setminus\{0_G\}$, and hence $|G|-1=|B|=-\frac{|G|}{2\delta}\theta_3$. On the other hand, by Lemma \ref{lem::antipodal_DRG},  we have $\theta_3=\frac{\lambda-\mu}{2}-\delta$, $\delta=\sqrt{k+(\frac{\lambda-\mu}{2})^2}$, $|G|=k+1$ and $k=\mu+\lambda+1$. Thus, we obtain   $k-1=\lambda-\mu$ or $\mu-\lambda$, implying that  $\mu=0$ or $\lambda=0$, which is a contradiction. Therefore, $d=2$. Again by \eqref{equ::Str_Reg_1}, we assert that $\mathrm{Cay}(G,B)$ is a strongly regular Cayley graph with parameters $(|G|,-\frac{|G|}{2\delta}\theta_3, \frac{|G|}{4\delta^2}(\theta_3^2-1),\frac{|G|}{4\delta^2}(\theta_3^2-1))$. If $0_G\in B$, then $0_G\notin G\setminus B$. Combining \eqref{equ::Str_Reg_1} with $\theta_1-\theta_3=2\delta$ and $|B|=-\frac{|G|}{2\delta}\theta_3$ yields that
		\begin{equation}\label{equ::Str_Reg_2}
			(\underline{G\setminus B})^2=\frac{|G|}{(2\delta)^2}((\theta_1^2-1)\cdot\underline{G}+|G|\cdot 0_G).
		\end{equation}
		By a similar analysis,  we can deduce from \eqref{equ::Str_Reg_2} that $\mathrm{Cay}(G,G\setminus B)$ is a strongly regular Cayley graph with parameters $(|G|,\frac{|G|}{2\delta}\theta_1, \frac{|G|}{4\delta^2}(\theta_1^2-1),\frac{|G|}{4\delta^2}(\theta_1^2-1))$. 
	\end{proof}

	\section{Imprimitive distance-regular Cayley graphs with diameter four over abelian groups} \label{section::6}
	
	In this section, we present some properties of antipodal bipartite distance-regular Cayley graphs with diameter $4$ over abelian groups.
	
	%Let $\Gamma$ be an imprimitive distance-regular Cayley graph over abelian group $G$ with diameter $4$. If $\Gamma$ is antipodal non-bipartite (resp. non-antipodal bipartite), then $\overline{\Gamma}$ (resp. $\frac{1}{2}\Gamma$) is a primitive strongly regular graph. It seems to be a very strong restriction. It is impossible for $G=\mathbb{Z}_n\oplus \mathbb{Z}_p$, and we guess it is impossible for $G=\mathbb{Z}_n\oplus \mathbb{Z}_m$. So we only consider the case  $\Gamma$ is antipodal and bipartite.
	
	\begin{prop}\label{prop::anti_bipart}
		Let $G$ be an abelian group, and let $\Gamma$ be an $r$-fold antipodal bipartite distance-regular Cayley graph with diameter $4$ over $G$. If $r$ is an odd prime, then $G\cong M\oplus \mathbb{Z}_r$ and $\Gamma$ is isomorphic to a Cayley graph over $M\oplus \mathbb{Z}_r$ in which  the antipodal class and the bipartition set containing the identity vertex are $S_0\cup S_4=(0_{M}, \mathbb{Z}_r)$ and $S_0\cup S_2 \cup S_4=M_1\oplus \mathbb{Z}_r$, resepctively, where $M$ is an abelian group of order $|G|/r$ and $M_1$ is an index $2$ subgroup of $M$.
	\end{prop}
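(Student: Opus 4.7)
The plan is to mirror the strategy used in Proposition \ref{prop::prime_r_antipodal}, with the necessary modifications for the bipartite diameter-$4$ setting. First, by Lemma \ref{lem::DRG_dq}, the antipodal class $H=S_0\cup S_4$ is a subgroup of $G$ of order $r$ and the bipartition set $B=S_0\cup S_2\cup S_4$ is an index-$2$ subgroup of $G$ containing $H$. Writing the Sylow $r$-subgroup of $G$ in invariant-factor form and running the same automorphism-of-$G$ trick as in the first part of Proposition \ref{prop::prime_r_antipodal}, I would reduce to $G\cong M'\oplus\mathbb{Z}_m$ with $m=r^{s_l}$ and $H=\{0_{M'}\}\oplus\tfrac{m}{r}\mathbb{Z}_m$. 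Since $r$ is odd, the cyclic group $\mathbb{Z}_{m/r}$ has no index-$2$ subgroup, so passing to $G/H\cong M'\oplus\mathbb{Z}_{m/r}$ forces $B/H=M_1^{*}\oplus\mathbb{Z}_{m/r}$ for some index-$2$ subgroup $M_1^{*}\leq M'$; lifting back yields $B=M_1\oplus\mathbb{Z}_m$ with $M_1=M_1^{*}$.

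It remains to prove $m=r$. From $\lambda=a_1=0$ and Lemma \ref{lem::DR} one obtains $\underline{S}^2=r\mu\cdot e+\mu(\underline{B}-\underline{H})$, so that for any fixed $s_0\in S$ the translate $S'=S-s_0\subseteq B$ is an $(r\mu,r,r\mu,\mu)$-relative difference set in $B$ relative to $H$. Because $r$ is odd, the exceptional case in Lemma \ref{lem::NRDS} does not occur, and every element of $B$ must have order dividing $r\mu$; in particular $(0_{M_1},1)\in B$ has order $m=r^{s_l}$, giving $m\mid r\mu$ and hence $r^{s_l-1}\mid\mu$. To eliminate $s_l\geq 2$, I would analyze the slices $L_g=\{j\in\mathbb{Z}_m:(g,j)\in S\}$ for $g\in M'\setminus M_1$; from $\underline{S}\cdot\underline{H}=\underline{S}+\underline{S_3}=\underline{G\setminus B}$ each $L_g$ is a transversal of $\tfrac{m}{r}\mathbb{Z}_m$ in $\mathbb{Z}_m$. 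Character-sum computations over $\widehat{M'}$ and $\widehat{\mathbb{Z}_m}$, combined with the integrality of the halved graph $\Gamma^+=\mathrm{Cay}(B,B\setminus H)\cong K_{r\mu\times r}$ (whose eigenvalues $(r\mu-1)r,\,-r,\,0$ are all integers) and an application of Lemma \ref{lem::int_Cay} to a suitable slice in $\mathbb{Z}_m$, should then force $s_l=1$.

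The main obstacle lies precisely in this final step: unlike Case C of Proposition \ref{prop::prime_r_antipodal}, where the non-trivial eigenvalues of $\Gamma$ are automatically integers and one can directly analyze $\mathrm{Cay}(\mathbb{Z}_m,L_{0_M})$ via Lemma \ref{lem::int_Cay}, here the eigenvalues $\pm\sqrt{r\mu}$ of $\Gamma$ need not be integers, so the atom-counting argument does not apply to $L_g$ directly. My anticipated fix is to work through the halved graph (which has integral spectrum) and the RDS $S'$ on $B$, invoking Ma's lemma (Lemma \ref{lem::Ma}) whenever the Sylow $r$-subgroup of $B$ contains a cyclic factor of order $r^{s_l}\geq r^2$; this should produce a decomposition of the indicator $\underline{S'}$ as $r^a\underline{X_1}+\underline{P}\cdot\underline{X_2}$ that is incompatible with the precise RDS difference equation $\underline{S'}\,\underline{S'^{(-1)}}=r\mu\cdot e+\mu\underline{B\setminus H}$, thereby ruling out $s_l\geq 2$ and completing the proof.
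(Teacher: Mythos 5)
Your first part (that $H=S_0\cup S_4$ and $B=S_0\cup S_2\cup S_4$ are subgroups, the reduction to $G\cong M'\oplus\mathbb{Z}_m$ with $m=r^{s_l}$ via the automorphism trick of Proposition \ref{prop::prime_r_antipodal}, and the observation that oddness of $m$ forces $B=M_1\oplus\mathbb{Z}_m$ with $M_1$ of index $2$) is correct and matches the paper. The genuine gap is exactly where you flag it: the step $m=r$. Your relative-difference-set observation is sound --- translating $S$ into $B$ does give an $(r\mu,r,r\mu,\mu)$-RDS relative to $H$, and Lemma \ref{lem::NRDS} yields $\exp(B)\mid r\mu$, hence $r^{s_l-1}\mid\mu$ --- but for a general abelian $G$ this is strictly weaker than $s_l=1$ (the paper only deploys this RDS argument later, in Lemma \ref{lem::key2}, where $G=\mathbb{Z}_n\oplus\mathbb{Z}_p$ makes $\exp(B)$ large enough to force $r=p$). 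Your proposed repair via Ma's lemma does not go through as stated: Lemma \ref{lem::Ma} requires the Sylow $r$-subgroup of the ambient group to be cyclic, whereas the Sylow $r$-subgroup of $B=M_1\oplus\mathbb{Z}_m$ need not be (the factors $\mathbb{Z}_{r^{s_i}}$, $i\neq l$, survive in $M_1$); and its hypothesis $\chi(\underline{S'})\equiv 0\pmod{r^a}$ is not available --- one only knows $|\chi(\underline{S'})|^2=r\mu$ for characters nontrivial on $H$, and extracting divisibility of $\chi(\underline{S'})$ itself by $r$ from this would require a self-conjugacy-type assumption that you have not established. Appealing to the integrality of the halved graph $K_{r\mu\times r}$ also gives nothing new, since its connection set is just $B\setminus H$.

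For comparison, the paper closes this step quite differently. It first absorbs one dyadic factor $\mathbb{Z}_{2^{s_i}}$ of $M$ into the cyclic $r$-part to get $G\cong M'\oplus\mathbb{Z}_{m'}$ with $m'=2^{s_i}m$, so that the relevant slice $L_{0_{M'}}\subseteq\mathbb{Z}_{m'}$ lives in a single cyclic group and is a transversal of $\frac{m'}{r}\mathbb{Z}_{m'}$ inside $1+2\mathbb{Z}_{m'}$. It then computes $\psi_g(\underline{L_{0_{M'}}})$ for all $g$ (using $(\chi,\psi_g)(\underline{S})^2\in\{0,k\}$ and Lemma \ref{lem::character_sum}) and splits into cases. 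If $\sqrt{k}\in\mathbb{Q}$, all these character sums are integers, so $L_{0_{M'}}$ is a union of atoms of $\mathbb{B}(\mathcal{F}_{\mathbb{Z}_{m'}})$, and the transversal property forces $r^2\nmid m'$, i.e.\ $m=r$ --- this is the analogue of your ``atom-counting'' idea, made to work precisely because of the regrouping into $\mathbb{Z}_{m'}$. If $\sqrt{k}\notin\mathbb{Q}$, a Galois automorphism sending $\sqrt{k}\mapsto-\sqrt{k}$ produces a translate $c_0L_{0_{M'}}$ with $\psi_g(\underline{L_{0_{M'}}})+\psi_g(\underline{c_0L_{0_{M'}}})$ supported only on $g\in\frac{m'}{2}\mathbb{Z}_{m'}$, and Fourier inversion bounds every coefficient of $\underline{L_{0_{M'}}}+\underline{c_0L_{0_{M'}}}$ by $2/r<1$, a contradiction. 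You would need to supply an argument of comparable strength (or this one) to complete your proof.
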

	\begin{proof}
		Since $r$ is a prime divisor of $|G|$, as in Proposition \ref{prop::prime_r_antipodal}, we assert that $G\cong M\oplus \mathbb{Z}_m$ with $M$ being an abelian group of order $|G|/m$ and $m=r^{\ell}$ for some $\ell\geq 1$, and that $\Gamma$ is isomorphic to a Cayley graph $\Gamma'$ over $M\oplus \mathbb{Z}_m$ in which the antipodal class containing the identity vertex is $S_0\cup S_4=(0_M,\frac{m}{r}\mathbb{Z}_m)$.  Furthermore, since $\Gamma'$ is bipartite and $m=r^\ell$ is odd, we have $2\mid |M|$. Let $H$ be the bipartition set $S_0\cup S_2\cup S_4$ in $\Gamma'$. Then $H$ is an index $2$ subgroup of $M\oplus \mathbb{Z}_m$, and so
		$H=M_1 \oplus \mathbb{Z}_m$, where $M_1$ is an index $2$ subgroup of $M$. Therefore, it remains to prove  $m=r$. 
		
		Since $M$ is an abelian group of even order, we can assume that 
		$M=K\oplus\mathbb{Z}_{2^{s_1}} \oplus \mathbb{Z}_{2^{s_2}} \oplus \cdots \oplus \mathbb{Z}_{2^{s_t}}$,
		where $s_1\geq s_2\geq\cdots\geq s_t\geq 1$ ($t\geq 1$) and $2\nmid |K|$. Then  $M_1=K\oplus\mathbb{Z}_{2^{s_1}}  \oplus \cdots\oplus 2\mathbb{Z}_{2^{s_i}}\oplus\cdots \oplus \mathbb{Z}_{2^{s_t}}$ for some $i\in\{1,\ldots,t\}$. Let $m'=2^{s_i}m$. As $r$ is odd and $m=r^\ell$, we have $\mathbb{Z}_{2^{s_i}}\oplus \mathbb{Z}_{m}\cong \mathbb{Z}_{m'}$. Let $M'=K\oplus(\oplus_{j=1,j\neq i}^t \mathbb{Z}_{2^{s_i}})$. Then  $M\oplus \mathbb{Z}_m\cong M'\oplus \mathbb{Z}_{m'}$, and it is easy to check that $\Gamma'$ is isomorphic to a Cayley graph over $M'\oplus \mathbb{Z}_{m'}$ in which the antipodal class and the bipartition set containing the identity vertex are $S_0\cup S_4=(0_{M'}, \frac{m'}{r}\mathbb{Z}_{m'})$ and $S_0\cup S_2 \cup S_4=M'\oplus 2\mathbb{Z}_{m'}$, respectively. By Lemma \ref{lem::DR},  
		\begin{equation*}
			\underline{\left(0_{M'},\frac{m'}{r}\mathbb{Z}_{m'}\right)}\cdot\underline{S_1}=(\underline{S_0}+\underline{S_4})\cdot\underline{S_1}=\underline{S_1}+\underline{S_3}=\underline{(M', 1+2\mathbb{Z}_{m'})},
		\end{equation*}
		which implies that $\underline{\frac{m'}{r}\mathbb{Z}_{m'}}\cdot \underline{L_{0_{M'}}}=\underline{1+2\mathbb{Z}_{m'}}$. Thus $|L_{0_{M'}}|=\frac{m'}{2r}$, and $L_{0_{M'}}$ contains exactly one element from each coset in the set  $\{i+\frac{m'}{r}\mathbb{Z}_{m'}\mid i\in 1+2\mathbb{Z}_{m'}\}$.	Let $\widehat{\mathbb{Z}_{m'}}=\{\psi_g\mid g\in \mathbb{Z}_{m'}\}$ be the set of all irreducible characters of $\mathbb{Z}_{m'}$. In what follows, we shall determine the value of  $\psi_g(\underline{L_{0_{M'}}})$ for all $g\in \mathbb{Z}_{m'}$. Clearly,  $\psi_g(\underline{L_{0_{M'}}})=|L_{0_{M'}}|=\frac{m'}{2r}$ if $g=0$, and $\psi_g(\underline{L_{0_{M'}}})=-|L_{0_{M'}}|=-\frac{m'}{2r}$ if $g=\frac{m'}{2}$ due to $L_{0_{M'}}\subseteq 1+2\mathbb{Z}_{m'}$.
		Again by Lemma \ref{lem::DR}, we have 
		\begin{equation}\label{equ::S^2}
			\underline{S}^2=k\cdot e+\mu\underline{S_2}=k\cdot e+\mu\left(\underline{M'\oplus2\mathbb{Z}_{m'}}-\underline{\left(0_{M'},\frac{m'}{r}\mathbb{Z}_{m'}\right)}\right).
		\end{equation}
		If $g\in r\mathbb{Z}_{m'}\setminus \frac{m'}{2}\mathbb{Z}_{m'}$, then from \eqref{equ::S^2} and Lemma \ref{lem::antipodal_DRG} (ii) we obtain
		$(\chi,\psi_g)(\underline{S})^2=k-\mu r=0$, 
		and hence  $(\chi,\psi_g)(\underline{S})=0$ for all $\chi\in \widehat{M'}$.
		Therefore, by  Lemma \ref{lem::character_sum},
		\begin{equation*}
			\psi_g(\underline{L_{0_{M'}}})=\frac{1}{|M'|}\sum_{\chi\in \widehat{M'}}(\chi,\psi_g)(\underline{S})=0.
		\end{equation*}
		If $g\notin r\mathbb{Z}_{m'}$, then from  \eqref{equ::S^2} we get
		$((\chi,\psi_g)(\underline{S}))^2=k$, 
		and hence $(\psi,\chi_g)(\underline{S})\in \{-\sqrt{k},\sqrt{k}\}$ for all $\chi\in \widehat{M'}$.
		Again by Lemma \ref{lem::character_sum}, 
		\begin{equation}\label{Galois1}
			\psi_g(\underline{L_{0_{M'}}})=\frac{1}{|M'|}\sum_{\chi\in \widehat{M'}}(\chi,\psi_g)(\underline{S})\in\left\{\pm \frac{i\sqrt{k}}{|M'|}\mid i=0,1,\ldots,|M'|\right\}.     	
		\end{equation}
		We consider the following two cases.
		
		{\flushleft \bf Case A.} $\sqrt{k}\in \mathbb{Q}$.
		
		In this situation, for any $g\in \mathbb{Z}_{m'}$,  $\psi_g(\underline{L_{0_{M'}}})\in \mathbb{Z}$ because it is a rational algebraic integer. Thus $L_{0_{M'}}$ is a union of some atoms for $\mathbb{B}(\mathcal{F}_{\mathbb{Z}_{m'}})$. Furthermore, since $L_{0_{M'}}$ is a subset of $1+2\mathbb{Z}_{m'}$ that contains exactly one element from each coset in the set $\{i+\frac{m'}{r}\mathbb{Z}_{m'}\mid i\in 1+2\mathbb{Z}_{m'}\}$,  there exists some $a\in L_{0_{M'}}$ such that  $(1+\frac{m'}{r}\mathbb{Z}_{m'})\cap L_{0_{M'}}=\{a\}$. If  $1+\frac{m'}{r}\mathbb{Z}_{m'}\subseteq\mathbb{Z}_{m'}^*$, then $a\in\mathbb{Z}_{m'}^*$. As  $\mathbb{Z}_{m'}^*$ is exactly an atom, we assert that  $1+\frac{m'}{r}\mathbb{Z}_{m'}\subseteq\mathbb{Z}_{m'}^*\subseteq L_{0_{M'}}$. Thus $1+\frac{m'}{r}\mathbb{Z}_{m'}=\{a\}$, and it follows that $m'=r$, which is impossible. If $1+\frac{m'}{r}\mathbb{Z}_{m'}\not\subseteq\mathbb{Z}_{m'}^*$, then there exists some  $i\in \mathbb{Z}_{m'}$ such that $1+i\frac{m'}{r}\notin \mathbb{Z}_{m'}^*$. Combining this with $m'=2^{s_i}m=2^{s_i}r^\ell$ and $\mathrm{gcd}(1+i\frac{m'}{r},\frac{m'}{r})=1$, we obtain $\mathrm{gcd}(1+i\frac{m'}{r},m')=r$, which implies that   $r^2\nmid m'$. Therefore, we have $m=r$, and the result follows.		
		
		{\flushleft \bf Case B.} $\sqrt{k}\notin \mathbb{Q}$.
		
		In this situation, $\psi_g(\underline{L_{0_{M'}}})\in \mathbb{Q}$ for any $g\in r\mathbb{Z}_{m'}$, and $\psi_g(\underline{L_{0_{M'}}})\in \{\pm \frac{i\sqrt{k}}{|M'|}\mid i=0,1,\ldots,|M'|\}\subseteq\mathbb{Q}(\omega)\setminus \mathbb{Q}$ for any $g\notin r\mathbb{Z}_{m'}$, where $\omega$ is a primitive $m'$-th root of unity. Then there exists an element $\sigma_{c_0}$ with $c_0\in\mathbb{Z}_{m'}^*$ in the Galois group $\mathrm{Gal}[\mathbb{Q}(\omega):\mathbb{Q}]=\{\sigma_c:\omega\mapsto \omega^c\mid c\in\mathbb{Z}_{m'}^*\}$ such that $\sigma_{c_0}(\sqrt{k})=-\sqrt{k}$. By applying $\sigma_{c_0}$ on both sides of \eqref{Galois1}, we obtain 
		\begin{equation*}
			\psi_g(\underline{c_0L_{0_{M'}}})=-\frac{1}{|M'|}\sum_{\chi\in \widehat{M'}}(\chi,\psi_g)(\underline{S})=-\psi_g(\underline{L_{0_{M'}}})~\mbox{for $g\notin r\mathbb{Z}_{m'}$}.
		\end{equation*}
		Recall that $\psi_g(\underline{L_{0_{M'}}})=|L_{0_{M'}}|=\frac{m'}{2r}$ if $g=0$,  $\psi_g(\underline{L_{0_{M'}}})=-|L_{0_{M'}}|=-\frac{m'}{2r}$ if $g=\frac{m'}{2}$, and $\psi_g(\underline{L_{0_{M'}}})=0$ if $g\in r\mathbb{Z}_{m'}\setminus \frac{m'}{2}\mathbb{Z}_{m'}$. Thus  $\psi_g(\underline{c_0L_{0_{M'}}})=\psi_g(\underline{L_{0_{M'}}})\in\mathbb{Q}$ for all $g\in r\mathbb{Z}_{m'}$. According to the Fourier inversion formula \eqref{equ::FI}, for each $g\in\mathbb{Z}_{m'}$, we have  
		\begin{equation*}
			\begin{aligned}
				a_g(\underline{L_{0_{M'}}})+a_g(\underline{c_0L_{0_{M'}}})=&\frac{1}{m'}\sum_{h\in\mathbb{Z}_{m'}}\left(\psi_h(\underline{L_{0_{M'}}})+\psi_h(\underline{c_0L_{0_{M'}}})\right)\psi_{h}(g^{-1})\\
				=&\frac{1}{m'}\sum_{h\in\frac{m'}{2}\mathbb{Z}_{m'}}\left(\psi_h(\underline{L_{0_{M'}}})+\psi_h(\underline{c_0L_{0_{M'}}})\right)\psi_{h}(g^{-1})\\
				=&\frac{2}{m'}\sum_{h\in\frac{m'}{2}\mathbb{Z}_{m'}}\psi_h(\underline{L_{0_{M'}}})\psi_{h}(g^{-1}).
			\end{aligned}
		\end{equation*}
		Therefore, 
		\begin{equation*}
			\max_{g\in\mathbb{Z}_{m'}}\left(a_g(\underline{L_{0_{M'}}})+a_g(\underline{c_0L_{0_{M'}}})\right)
			= \max_{g\in\mathbb{Z}_{m'}}\frac{2}{m'}\sum_{h\in\frac{m'}{2}\mathbb{Z}_{m'}}\psi_h(\underline{L_{0_{M'}}})\psi_{h}(g^{-1})
			\leq \frac{1}{m'}\cdot4\cdot\frac{m'}{2r}=\frac{2}{r},
		\end{equation*}		
		which is impossible because $L_{0_{M'}}\neq \varnothing$ and $r\geq 3$.
		
		We complete the proof.
	\end{proof}
	
	\begin{prop}\label{prop::an2}
		Let $G$  be a abelian group, and let $p$ be an odd prime. Assume that $\Gamma=\mathrm{Cay}(G \oplus \mathbb{Z}_p, S)$ is an antipodal bipartite distance-regular Cayley graph with diameter $4$ in which the antipodal class and the bipartition set containing the identity vertex are $S_0\cup S_4= (0_G, \mathbb{Z}_p)$ and $S_0\cup S_2\cup S_4=H\oplus \mathbb{Z}_p$, respectively, where $H$ is an index $2$  subgroup of $G$.
		Then the following statements hold.
		\begin{enumerate}[(i)]
			\item The sets $R_i$, for $i \in \mathbb{Z}_p$, form a partition of $G\setminus H$.
			\item For every non-trivial character $\psi \in \widehat{\mathbb{Z}_p}$, $B=\{g\in G\mid (\chi_g,\psi)(\underline{S})=\sqrt{k}\}$ is a non-empty set such that
			\begin{equation*}
				\chi_l(\underline{B}) = \frac{|G|}{2\sqrt{k}}\left( \sum_{i \in \mathbb{Z}_p} \psi(i) a_{-l}(\underline{R_i})+\sqrt{k} \cdot a_{-l}(0_G) \right) ~\mbox{for all $l\in G$}.
			\end{equation*}
			\item $\frac{|G|}{2\sqrt{k}}$ is an integer.
		\end{enumerate}
	\end{prop}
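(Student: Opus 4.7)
My plan proceeds through (i)--(iii) in order, with (iii) requiring the main ideas. For part (i), the bipartite structure forces $S\subseteq (G\setminus H)\oplus \mathbb{Z}_p$, so $R_i\subseteq G\setminus H$ for every $i\in\mathbb{Z}_p$. From Lemma~\ref{lem::DR} and the antipodal/bipartite conditions, $\underline{S_1}\cdot(\underline{S_0}+\underline{S_4}) = \underline{S_1}+\underline{S_3} = \underline{(G\setminus H)\oplus\mathbb{Z}_p}$; rewriting the left-hand side as $\bigl(\sum_{i\in\mathbb{Z}_p}\underline{R_i}\bigr)\cdot\underline{\mathbb{Z}_p}$ and comparing coefficients in the $G$-component yields $\sum_i\underline{R_i} = \underline{G\setminus H}$, which is the asserted partition.

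For part (ii), I apply a non-trivial character $(\chi_g,\psi)$ to the bipartite identity $\underline{S}^2 = k\cdot e + \mu\underline{S_2}$ from Lemma~\ref{lem::DR}, noting $\underline{S_2} = \underline{H\oplus\mathbb{Z}_p} - \underline{(0_G,\mathbb{Z}_p)}$. Since $\psi(\underline{\mathbb{Z}_p})=0$, both terms on the right vanish, giving $((\chi_g,\psi)(\underline{S}))^2 = k$, so $(\chi_g,\psi)(\underline{S})\in\{\pm\sqrt{k}\}$. Writing $\sum_{g\in G}(\chi_g,\psi)(\underline{S})\cdot g = 2\sqrt{k}\,\underline{B} - \sqrt{k}\,\underline{G}$, I apply $\chi_l$ to both sides and use the Fourier identity $\sum_{g\in G}\chi_g(\underline{R_i})\chi_l(g) = |G|a_{-l}(\underline{R_i})$, a direct consequence of \eqref{equ::FI}, to recover the claimed closed form. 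Non-emptiness is immediate: setting $l=0_G$ gives $|B| = \tfrac{|G|}{2\sqrt{k}}\cdot\sqrt{k} = |G|/2>0$.

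For part (iii), set $\alpha=|G|/(2\sqrt{k})$. By (ii), $\chi_l(\underline{B})$ equals $|G|/2$ at $l=0_G$, vanishes for $l\in H\setminus\{0_G\}$, and equals $\alpha\psi(i_l)$ on $G\setminus H$ (where $-l\in R_{i_l}$). The decisive observation is that $\psi$ has order $p$, so $\psi(i_l)^p=1$; hence $\chi_l(\underline{B}^p)=\chi_l(\underline{B})^p$ takes the common value $\alpha^p$ on all of $G\setminus H$, vanishes on $H\setminus\{0_G\}$, and equals $(|G|/2)^p$ at $l=0_G$. Fourier inversion applied to $\underline{B}^p\in\mathbb{Z}G$, together with the annihilator identity $\sum_{l\in G\setminus H}\chi_l(-g) = |G|[g=0_G] - |H|[g\in H^\perp]$ (where $H^\perp = \{0_G,g_0\}$ is the order-$2$ dual subgroup of $H$), yields $a_0(\underline{B}^p) - a_{g_0}(\underline{B}^p) = \alpha^p$. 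Both coefficients are non-negative integers, so $\alpha^p\in\mathbb{Z}$. By Lemma~\ref{lem::antipodal_DRG}(ii), $k=|G|/2$, so $\alpha^2=k\in\mathbb{Z}$; if $\alpha$ were a quadratic irrational, then $\alpha^p=k^{(p-1)/2}\alpha$ would be irrational, contradicting $\alpha^p\in\mathbb{Z}$. Hence $\alpha\in\mathbb{Q}$, and integrality over $\mathbb{Z}$ forces $\alpha\in\mathbb{Z}$.

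The main obstacle is (iii): individual character values $\chi_l(\underline{B})$ only yield that $\alpha$ is a cyclotomic algebraic integer, which is too weak for rationality. The $p$-th-power collapse $\psi^p=1$ is the essential technical ingredient, since it makes $\chi_l(\underline{B}^p)$ constant on $G\setminus H$; the subsequent appearance of the non-identity element $g_0\in H^\perp$, automatic from $[G:H]=2$, then extracts $\alpha^p$ as the integer coefficient difference $a_0(\underline{B}^p)-a_{g_0}(\underline{B}^p)$ in $\mathbb{Z}G$.
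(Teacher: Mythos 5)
Your proposal is correct and follows essentially the same route as the paper: part (i) via the antipodal relation $\underline{S_4}\cdot\underline{S_1}=\underline{S_3}$, part (ii) by applying $(\chi_g,\psi)$ to $\underline{S}^2=k\cdot e+\mu\underline{S_2}$ and Fourier inversion, and part (iii) by exploiting $\psi^p=1$ to make $\chi_l(\underline{B}^p)$ constant on $G\setminus H$ and then reading off $\bigl(\tfrac{|G|}{2\sqrt{k}}\bigr)^p$ as an integer difference of coefficients of $\underline{B}^p$ at $0_G$ and the involution generating $H^{\perp}$. Your closing step is in fact slightly more explicit than the paper's, which leaves the deduction from $\alpha^p\in\mathbb{Z}$ and $\alpha^2\in\mathbb{Q}$ to $\alpha\in\mathbb{Z}$ implicit in the phrase ``because $p$ is odd.''
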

	
	\begin{proof}
		(i) As in Proposition \ref{prop::an1}, from Lemma \ref{lem::DR} we can deduce that $\sum_{i\in \mathbb{Z}_p}\underline{R_i}=\underline{G\setminus H}$. Thus the sets $R_i$, for $i \in \mathbb{Z}_p$, form a partition of $G\setminus H$.
		
		(ii) Again by Lemma  \ref{lem::DR}, we have
		\begin{equation}\label{equ::key2caseA1}
			\underline{S}^2=k\cdot e+\mu\underline{S_2}=k\cdot e+\mu\left(\underline{H\oplus \mathbb{Z}_p}-\underline{(0_G,\mathbb{Z}_p)}\right).
		\end{equation}
		Let $\psi\in\widehat{\mathbb{Z}_p}$ be a non-trivial character of $\mathbb{Z}_p$, and let $\chi \in \widehat{G}$. By applying the character $(\chi,\psi)\in \widehat{G\oplus \mathbb{Z}_p}$ on both sides of  \eqref{equ::key2caseA1},  we obtain 
		\begin{equation*}
			((\chi,\psi)(\underline{S}))^2=k,
		\end{equation*}
		implying that $(\chi,\psi)(\underline{S})=\sqrt{k}$ or $-\sqrt{k}$. Let $B=\{g\in G\mid (\chi_g,\psi)(\underline{S})=\sqrt{k}\}$. By a similar analysis as in  Proposition \ref{prop::an1}, we can deduce that 
		\begin{equation*}
			\chi_l(\underline{B}) = \frac{|G|}{2\sqrt{k}}\left( \sum_{i\in \mathbb{Z}_p} \psi(i) a_{-l}(\underline{R_i}) +\sqrt{k} \cdot a_{-l}(0_G) \right)~\mbox{for $l \in G$}.
		\end{equation*}
		In particular, $|B|=\chi_0(\underline{B})=\frac{|G|}{2}$, and so $B$ is non-empty.
		
		(iii) Combining  (i) and (ii), we get
		\begin{equation*}
			\chi_l(\underline{B}^p)=\left(\frac{|G|}{2\sqrt{k}}\right)^p\left(a_l(\underline{G\setminus H})+\sqrt{k^p}\cdot a_l(0_G)\right)~\mbox{for $l \in G$}.
		\end{equation*}
		Let $\sigma:G\rightarrow \mathbb{C}$ be the mapping defined by
		\begin{equation*}
			\sigma(g)=
			\begin{cases} 
				1, & \mbox{if}~g \in H; \\
				-1, & \mbox{if}~g \in G\setminus H.
			\end{cases}   
		\end{equation*}
		As $H$ is an index $2$ subgroup of $G$, the mapping $\sigma$ is exactly an irreducible representation of $G$, and so  $\sigma\in\widehat{G}$ because $G$ is abelian. Thus we assert that there exists some involution $a\in G$ such that $\sigma=\chi_a\in \widehat{G}$. Then from the Fourier inversion formula \eqref{equ::FI} we obtain 
		\begin{equation*}
			\underline{B}^p=\left(\frac{|G|}{2\sqrt{k}}\right)^p\left(\frac{1}{2}\cdot 0_G-\frac{1}{2}\cdot a+\frac{\sqrt{k^p}}{|G|}\underline{G}\right).
		\end{equation*}
		Therefore, $\frac{|G|}{2\sqrt{k}}$ is an integer because $p$ is odd and $\underline{B}^p\in \mathbb{Z} G$.
	\end{proof}

	\section{Distance-regular Cayley graphs over $\mathbb{Z}_{n}\oplus\mathbb{Z}_{p}$}\label{section::7}
	
	In this section, we shall prove Theorem \ref{thm::main}, which determines all distance-regular Cayley graphs over the group $\mathbb{Z}_{n} \oplus \mathbb{Z}_{p}$. To achieve this goal, we need the following two lemmas.

	\begin{lemma}\label{lem::key1}
		Let $p$ be an odd prime with $p\mid n$. There are no antipodal non-bipartite distance-regular Cayley graphs with diameter $3$ over $\mathbb{Z}_{n}\oplus\mathbb{Z}_{p}$.
	\end{lemma}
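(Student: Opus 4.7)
The plan is to assume for contradiction that such a graph $\Gamma = \mathrm{Cay}(\mathbb{Z}_n \oplus \mathbb{Z}_p, S)$ exists, with antipodal class $H$ containing the identity and $r := |H|$. By Lemma \ref{lem::DRG_dq}(i), $H$ is a subgroup of $G := \mathbb{Z}_n \oplus \mathbb{Z}_p$, and by Lemma \ref{lem::imprimitive}(ii) and (iv) the antipodal quotient $\overline{\Gamma}$ is isomorphic to the complete graph $K_{np/r}$. A preliminary step is to reduce to the case that $r$ is prime: when $r$ is composite, one can pass through a prime-order subgroup $H_0 \le H$, invoking the equitable partition structure to recurse on a smaller antipodal diameter-three DRG.

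Assume $r$ is prime. Proposition \ref{prop::prime_r_antipodal} then produces a decomposition $G \cong M \oplus \mathbb{Z}_r$ with the antipodal class identified as $(0_M, \mathbb{Z}_r)$. Writing $n = p^s m$ with $\gcd(p,m)=1$, the splitting of $G$ forces one of three cases: (a) $r = p$ with $M \cong \mathbb{Z}_n$ cyclic; (b) $r$ odd and $r \ne p$, so $r$ divides $n$ exactly once and $M \cong \mathbb{Z}_{n/r} \oplus \mathbb{Z}_p$ is non-cyclic; or (c) $r = 2$ with $n$ twice an odd number and $M \cong \mathbb{Z}_{n/2} \oplus \mathbb{Z}_p$ non-cyclic. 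In case (a), Proposition \ref{prop::an1}(ii) gives an $(n,\, -n\theta_3/(2\delta),\, x^p - (n/(2\delta))^p)$-cyclic polynomial addition set $B \subseteq \mathbb{Z}_n$. Lemma \ref{lem::PAS1} then forces $|B| \in \{0, 1, n-1, n\}$, and each of these four boundary values is excluded by a brief spectral argument using the antipodal diameter-three spectrum of Lemma \ref{lem::antipodal_DRG}(i) together with the strict inequalities $-k < \theta_3 < -1$ (valid since $\Gamma$ has four distinct real eigenvalues and is non-bipartite).

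Cases (b) and (c) constitute the main obstacle because Lemma \ref{lem::PAS1} no longer directly applies. In (b), Proposition \ref{prop::an1}(ii) still forces $\chi(\underline{B}) = (|M|/(2\delta))\,\zeta_r^{j(\chi)}$ for every non-trivial $\chi \in \widehat{M}$; my approach is to combine this with Lemma \ref{lem::Ma}, applied to a cyclic Sylow $q$-subgroup of $M$ for a prime $q \ne p$ dividing $|M|$. Such a $q$ exists unless $M$ is itself a $p$-group, in which case $M \cong \mathbb{Z}_{p^s} \oplus \mathbb{Z}_p$ and the contradiction must instead be drawn from Lemma \ref{lem::Schur_dq} via the Schur ring $\mathcal{D}_{\mathbb{Z}}(G,S)$ provided by Lemma \ref{lem::Schur_DRG}(i). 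In (c), Proposition \ref{prop::an1}(iii) instead outputs a strongly regular Cayley graph over $M$ with $\lambda = \mu$, whose existence must be excluded using the Schur-ring obstructions in Lemmas \ref{lem::Schur ring} and \ref{lem::Schur_dq}. The hardest step throughout is the character-theoretic book-keeping: distributing the roots of unity $\zeta_r^{j(\chi)}$ across $\widehat{M}$ in a way compatible with the polynomial-addition-set equation while respecting the arithmetic constraints imposed by Lemma \ref{lem::Ma}.
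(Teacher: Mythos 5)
Your skeleton (reduce to $r$ prime via a quotient argument, invoke Proposition \ref{prop::prime_r_antipodal}, then split on $r=p$, $r$ odd with $r\neq p$, and $r=2$) matches the paper, and your case (a) is essentially the paper's Case A. But case (b) — which you yourself flag as the main obstacle — is not actually resolved, and the route you propose for it does not work. The paper closes this case with a short cyclotomic-field argument that your write-up misses: for $l_0\in M\setminus\{0_M\}$ one has $\chi_{l_0}(\underline{B})=\frac{|M|}{2\delta}\psi(i_0)$ with $\psi(i_0)$ a primitive $r$-th root of unity, so $\chi_{l_0}(\underline{B})\in\mathbb{Q}(\omega_r)\setminus\mathbb{Q}$ (using $\frac{|M|}{2\delta}\in\mathbb{Z}$ from Proposition \ref{prop::an1}); on the other hand $\chi_{l_0}$ is a character of $M\cong\mathbb{Z}_{n/r}\oplus\mathbb{Z}_p$ with $p\mid n/r$, so $\chi_{l_0}(\underline{B})\in\mathbb{Q}(\omega_{n/r})$, and $\mathbb{Q}(\omega_r)\cap\mathbb{Q}(\omega_{n/r})=\mathbb{Q}$ because $\gcd(r,n/r)=1$. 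Your alternative — Lemma \ref{lem::Ma} applied to a cyclic Sylow $q$-subgroup of $M$ — is left as an unexecuted plan with no identified divisibility hypothesis to feed into that lemma, and your fallback for the subcase $M\cong\mathbb{Z}_{p^s}\oplus\mathbb{Z}_p$ via Lemma \ref{lem::Schur_dq} is incoherent: $\Gamma$ is an imprimitive graph over $G=M\oplus\mathbb{Z}_r$, so Lemma \ref{lem::Schur_DRG} gives no primitive Schur ring, and certainly none over $M$ itself ($B$ is merely a subset of $M$, not the connection set of any distance-regular graph you have constructed). That subcase needs no separate treatment anyway — the field-disjointness argument covers it.

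Case (c) is also under-specified. "Schur-ring obstructions" (via Corollary \ref{cor::pri_DRG}) only reduce the primitive possibility for the resulting strongly regular graph $\Gamma'$ over $M=\mathbb{Z}_{n/2}\oplus\mathbb{Z}_p$ to either a complete graph or the case $M\cong\mathbb{Z}_p\oplus\mathbb{Z}_p$; the latter requires Lemma \ref{lem::1} to identify $\Gamma'$ as the line graph of a transversal design and then the parameter computation showing such graphs never have $\lambda'=\mu'$ (and the antipodal alternative, a complete multipartite graph, must likewise be killed by $\lambda'=\mu'$). Without these exclusions the case is not closed. So the proposal is correct in outline and in case (a), but the two non-cyclic cases — the heart of the lemma — contain genuine gaps.
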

	\begin{proof}
		By contradiction, assume that $\Gamma = \mathrm{Cay}(\mathbb{Z}_n \oplus \mathbb{Z}_p, S)$ is an antipodal non-bipartite distance-regular Cayley graph of diameter $3$ over $\mathbb{Z}_n \oplus \mathbb{Z}_p$ with $n$ as small as possible. Let $k$ and $r$ ($r \geq 2$) denote the valency and the common size of antipodal classes (or fibres) of $\Gamma$, respectively. According to Lemma \ref{lem::antipodal_DRG}, $k + 1 = \frac{np}{r}$, $k=\mu(r-1)+\lambda+1$, and $\Gamma$ has the intersection array $\{k, \mu(r - 1), 1; 1, \mu, k\}$
		and eigenvalues $k$, $\theta_1$, $\theta_2 = -1$, $\theta_3$, where
		\begin{equation}
			\label{equ::1.3}
			\theta_1 = \frac{\lambda - \mu}{2} + \delta,~~\theta_3 = \frac{\lambda - \mu}{2} - \delta~~\mbox{and}~~\delta = \sqrt{k + \left(\frac{\lambda - \mu}{2}\right)^2}.
		\end{equation}
		Let $H=S_3\cup\{(0,0)\}$ denote the antipodal class containing the identity vertex. Then $|H|=r$. By Lemma \ref{lem::DRG_dq}, $H$ is a subgroup of $\mathbb{Z}_{n}\oplus\mathbb{Z}_{p}$. If $r$ is not  prime, then $H$ has  a non-trivial subgroup $K$. Let $\mathcal{B}$ denote the partition of $\mathbb{Z}_{n}\oplus\mathbb{Z}_{p}$ consisting of all cosets of $K$ in $\mathbb{Z}_{n}\oplus\mathbb{Z}_{p}$, and let $\Gamma_{\mathcal{B}}$ be the quotient graph of $\Gamma$ with respect to $\mathcal{B}$. Then, by a similar way as in Lemma \ref{lem::DRG_dq}, we can verify that  $\Gamma_\mathcal{B}\cong\mathrm{Cay}((\mathbb{Z}_{n}\oplus\mathbb{Z}_{p})/K,S/K)$, where $S/K=\{sK\mid s\in S\}$.  Since $K\cap(S_1\cup S_2)=\varnothing$, for any two distinct  $s_1,s_2\in S$, we have  $s_1K\neq s_2K$.  
		Also, by Corollary \ref{cor::DR}, 
		\begin{equation}\label{equ::quo2}
			\left\{\begin{aligned}
				&\underline{S}^2=k\cdot 0_G+(\lambda-\mu)\underline{S}+\mu(\underline{\mathbb{Z}_{n}\oplus\mathbb{Z}_{p}}-\underline{H}),\\
				&\underline{H}\cdot(\underline{S}+e)=\underline{\mathbb{Z}_{n}\oplus\mathbb{Z}_{p}}.\\ \end{aligned}\right.
		\end{equation}
		Let $f$ be the mapping from the group algebra $\mathbb{Z}\cdot(\mathbb{Z}_{n}\oplus\mathbb{Z}_{p})$ to the group algebra $\mathbb{Z}\cdot((\mathbb{Z}_{n}\oplus\mathbb{Z}_{p})/K)$ defined by 
		\begin{equation*}
			f\left(\sum_{x\in \mathbb{Z}_{n}\oplus\mathbb{Z}_{p}} a_xx\right)=\sum_{x\in \mathbb{Z}_{n}\oplus\mathbb{Z}_{p}}a_x\cdot xK.
		\end{equation*}
		By applying $f$ on both sides of the two equations in \eqref{equ::quo2}, we obtain 
		\begin{equation*}
			\left\{\begin{aligned}
				&(\underline{S/K})^2=k\cdot K+(\lambda-\mu)\underline{S/K}+\mu|K|(\underline{(\mathbb{Z}_{n}\oplus\mathbb{Z}_{p})/K}-\underline{H/K}),\\
				&|K|\underline{H/K}\cdot(\underline{S/K}+K)=|K|\underline{(\mathbb{Z}_{n}\oplus\mathbb{Z}_{p})/K},\\ \end{aligned}\right.
		\end{equation*}
		or equivalently,
		\begin{equation}\label{equ::quo1}
			\left\{\begin{aligned}
				&(\underline{S/K})^2=k\cdot K+((\lambda-\mu+\mu|K|)-\mu|K|)\underline{S/K}+\mu|K|(\underline{(\mathbb{Z}_{n}\oplus\mathbb{Z}_{p})/K}-\underline{H/K}),\\
				&\underline{H/K}\cdot(\underline{S/K}+K)=\underline{(\mathbb{Z}_{n}\oplus\mathbb{Z}_{p})/K}.\\ \end{aligned}\right.
		\end{equation}
		Then from \eqref{equ::quo1} and Corollary \ref{cor::DR} we can deduce that  $\Gamma_\mathcal{B}$ is an ($r/|K|$)-antipodal distance-regular graph of diameter $3$ with intersection array $\{k,k-(\lambda-\mu+\mu|K|)-1=\mu|K|(r/|K|-1),1;1,\mu|K|,k\}$.   If $\Gamma_\mathcal{B}$ is bipartite, then $\Gamma$ is also bipartite, a contradiction. Hence, $\Gamma_\mathcal{B}$ is an antipodal non-bipartite distance-regular Cayley graph of diameter $3$ over the cyclic group or the group  $\mathbb{Z}_{n'}\oplus\mathbb{Z}_{p}$ with $n'\mid n$. By Theorem \ref{thm::cir_DRG}, we assert that the former case cannot occur. For the later case,  this violates the  minimality  of $n$. Therefore, $r$ is a prime. Then, by Proposition \ref{prop::prime_r_antipodal}, $\mathbb{Z}_n\oplus \mathbb{Z}_p\cong M \oplus \mathbb{Z}_r$ and $\Gamma$ is isomorphic to 
		a Cayley graph  over $M \oplus \mathbb{Z}_r$ in which the antipodal class containing the identity vertex is $S_3 \cup \{(0_M,0)\} = (0_M,\mathbb{Z}_r)$, where $M$ is an abelian group of order $|G|/r$. Thus we only need to consider the following two cases.
		
		{\flushleft \bf Case A.} $M=\mathbb{Z}_n$, $r=p$ and $S_0\cup S_3=(0_M,\mathbb{Z}_p)$. 
		
		In this situation, $r=p$ is odd. By Proposition \ref{prop::an1} (ii), there exists a non-empty $(n,-\frac{n}{2\delta}\theta_3, x^p -\frac{n^p}{(2\delta)^p})$-polynomial addition set $B$ in $\mathbb{Z}_n$. Note that $|B|=-\frac{n}{2\delta}\theta_3$.  On the other hand, by Lemma \ref{lem::PAS1}, we assert that $|B|\in \{1,n-1,n\}$. If $|B|=-\frac{n}{2\delta}\theta_3=1$, then from \eqref{equ::1.3} and $k=n-1$ we can deduce that $\lambda-\mu=n-2=k-1$, which is impossible because $k = \mu(p - 1) + \lambda + 1\geq 2\mu+\lambda+1$ and $\mu\geq 1$. Similarly, if $|B|=-\frac{n}{2\delta}\theta_3=n-1$ then  $\mu-\lambda=n-2=k-1$, and if  $|B|=-\frac{n}{2\delta}\theta_3=n$ then $k=0$, which are also impossible.

		{\flushleft \bf Case B.} $M=\mathbb{Z}_{\frac{n}{r}}\oplus\mathbb{Z}_p$ and $S_0\cup S_3=(0_M,\mathbb{Z}_r)$.
		
		In this situation, we must have $\mathrm{gcd}(r,\frac{n}{r})=1$. 
		
		{\flushleft \bf Subcase B.1.} $r=2$.

		Since $p$ is odd and $\mathrm{gcd}(2,\frac{n}{2})=1$, we see that $|M|=\frac{np}{2}$ is odd.	By Proposition \ref{prop::an1} (iii), there exists a strongly regular Cayley graph $\Gamma'$ over $M$ with parameters $(|M|,k'=\frac{|M|}{2\delta}\theta, \lambda'=\frac{|M|}{4\delta^2}(\theta^2-1),\mu'=\frac{|M|}{4\delta^2}(\theta^2-1))$, where $\theta=\theta_1$ or $-\theta_3$.  Clearly, $\Gamma'$ is non-bipartite because it is of odd order.
		If $k'=2$, then $\Gamma'$ is a cycle, and hence $\Gamma'\cong C_4$, which is impossible. Now suppose $k'\geq 3$. We see that  $\Gamma'$  must be primitive or antipodal. If $\Gamma'$ is antipodal, then it is a complete multipartite graph, which is impossible because $\lambda'=\mu'$. If $\Gamma'$ is primitive, then  Corollary \ref{cor::pri_DRG} indicates that $\frac{n}{2}=p$ and $M=\mathbb{Z}_p\oplus \mathbb{Z}_p$ because $\Gamma'$ cannot not a complete graph. Thus, by Lemma \ref{lem::1},   $\Gamma'$ is isomorphic to the line graph of a transversal design $TD(r',p)$ with $2\leq r'\leq p-1$. However, this is also impossible because $\lambda'=\mu'$.

		{\flushleft \bf Subcase B.2.} $r\neq 2$. 
		
		If $r=p$, then we are done by Case A.	Now suppose  $r\neq p$. Recall that  $S=S_1=\cup_{i\in\mathbb{Z}_r}(R_i,i)$. By Proposition \ref{prop::an1} (i), the sets $R_i$, for $i \in \mathbb{Z}_r$, form a partition of $M \setminus \{0_M\}$. Furthermore, by Proposition \ref{prop::an1} (ii), both $\frac{|M|}{2\delta}$ and $\theta_3$ are integers, and for every non-trivial character $\psi\in\widehat{\mathbb{Z}_r}$,  there exists a non-empty polynomial addition set  $B\subseteq M$ such that
		\begin{equation}\label{equ::caseb2}
			\chi_{l}(\underline{B})=\frac{|M|}{2\delta}\left\{\sum_{i\in \mathbb{Z}_r}\psi(i)a_{-l}(\underline{R_i})-\theta_3\cdot a_{-l}(0_M)\right\}~\mbox{for all $l\in M$}.
		\end{equation}
		Let $l_0\in M\setminus\{0_M\}$. Then there exists some $i_0\in \mathbb{Z}_r\setminus\{0\}$ such that $-l_0\in R_i$, and   \eqref{equ::caseb2} implies that
		\begin{equation}\label{equ::caseb2_1}
			\chi_{l_0}(\underline{B})=\frac{|M|}{2\delta}\psi(i_0).
		\end{equation}
		Since $r$ is an odd prime and $\psi\in \widehat{\mathbb{Z}_r}$ is non-trivial, we assert that $\psi(i_0)\in \mathbb{Q}(\omega_r)\setminus\mathbb{Q}$, where  $\omega_r$ is a primitive $r$-th root of unity. Thus it follows from \eqref{equ::caseb2_1}  and $\frac{|M|}{2\delta}\in \mathbb{Z}$ that $\chi_{l_0}(B)\in \mathbb{Q}(\omega_r)\setminus\mathbb{Q}$. On the other hand, we have $\chi_{l_0}(B)\in \mathbb{Q}(\omega_{\frac{n}{r}})$ because $\chi_{l_0}\in \widehat{M}=\widehat{\mathbb{Z}_{\frac{n}{r}}\oplus\mathbb{Z}_p}$ and $p\mid\frac{n}{r}$ due to $p\mid n$ and $p\neq r$, where $\omega_{\frac{n}{r}}$ is a primitive $\frac{n}{r}$-th root of unity. 
		Hence,  $\chi_{l_0}(B)\in (\mathbb{Q}(\omega_{\frac{n}{r}})\cap \mathbb{Q}(\omega_r))\setminus \mathbb{Q}$. However, this is impossible because $\mathbb{Q}(\omega_r)\cap \mathbb{Q}(\omega_{\frac{n}{r}})=\mathbb{Q}$ due to $\mathrm{gcd}(r,\frac{n}{r})=1$.		
		
		Therefore, we conclude that there are no antipodal non-bipartite distance-regular graphs with diameter $3$ over $\mathbb{Z}_{n}\oplus\mathbb{Z}_{p}$.
	\end{proof}
	
	\begin{lemma}\label{lem::key2}
		Let $p$ be an odd prime with $p\mid n$. There are no antipodal bipartite distance-regular Cayley graphs with diameter $4$ over $\mathbb{Z}_{n}\oplus\mathbb{Z}_{p}$.
	\end{lemma}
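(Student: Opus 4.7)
The plan is to argue by contradiction exactly as in Lemma \ref{lem::key1}: take a counterexample $\Gamma=\mathrm{Cay}(\mathbb{Z}_n\oplus\mathbb{Z}_p,S)$ with $n$ as small as possible. Let $r$ be the size of the antipodal class $H$ containing $e$, and let $K$ be the bipartition set containing $e$. By Lemma \ref{lem::DRG_dq}, $H$ is a subgroup of order $r$ and $K$ is an index-$2$ subgroup, and since vertices at distance $4$ from $e$ lie in the same bipartition as $e$ we also have $H\subseteq K$. Lemma \ref{lem::antipodal_DRG}(ii) gives $k=r\mu$ and $np=2r^2\mu$. If $r$ is composite, I would pick a proper non-trivial subgroup $L<H$ and pass to the quotient $\Gamma_{\mathcal{B}}\cong\mathrm{Cay}((\mathbb{Z}_n\oplus\mathbb{Z}_p)/L,\,S/L)$. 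Mimicking the calculation at the start of Lemma \ref{lem::key1} (now using the diameter-$4$ bipartite antipodal system of defining equations coming from Lemma \ref{lem::DR}), this quotient inherits an antipodal bipartite distance-regular structure of diameter $4$ with covering index $r/|L|\geq 2$ over a smaller abelian group, which is either cyclic (excluded by Theorem \ref{thm::cir_DRG}) or of the form $\mathbb{Z}_{n'}\oplus\mathbb{Z}_p$ with $n'<n$, contradicting the minimality of $n$.

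Hence $r$ is prime. When $r$ is an odd prime, Proposition \ref{prop::anti_bipart} yields $\mathbb{Z}_n\oplus\mathbb{Z}_p\cong M\oplus\mathbb{Z}_r$ with antipodal class $(0_M,\mathbb{Z}_r)$, and by cancellation of finite abelian groups this splits into the subcase $r=p$ (with $M=\mathbb{Z}_n$) and the subcase $r\neq p$ (forcing $\gcd(r,n/r)=1$ and $M=\mathbb{Z}_{n/r}\oplus\mathbb{Z}_p$). In the subcase $r\neq p$ I would apply Proposition \ref{prop::an2}: part (iii) makes $\sqrt{k}$ rational and $|M|/(2\sqrt{k})$ a positive integer, and then part (ii), evaluated at any $l_0\in M$ with $-l_0\in R_{i_0}$ for some non-zero $i_0\in\mathbb{Z}_r$, gives $\chi_{l_0}(\underline{B})=\tfrac{|M|}{2\sqrt{k}}\,\psi(i_0)\in\mathbb{Q}(\omega_r)\setminus\mathbb{Q}$ while simultaneously $\chi_{l_0}(\underline{B})\in\mathbb{Q}(\omega_{|M|})=\mathbb{Q}(\omega_{n/r})$ (since $p\mid n/r$). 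The cyclotomic disjointness $\mathbb{Q}(\omega_r)\cap\mathbb{Q}(\omega_{n/r})=\mathbb{Q}$ coming from $\gcd(r,n/r)=1$ then produces the contradiction, in direct analogy with Subcase B.2 of Lemma \ref{lem::key1}.

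The delicate remaining subcases are $r=p$ and $r=2$. For $r=p$ odd, the cyclotomic disjointness argument collapses because $\mathbb{Q}(\omega_p)\subseteq\mathbb{Q}(\omega_n)$; instead I plan to use the identity $\underline{B}^p=\alpha(e-a)+\beta\,\underline{\mathbb{Z}_n}$ established in the proof of Proposition \ref{prop::an2} (where $a=n/2$ is the unique involution of $\mathbb{Z}_n$), combining the integrality of its coefficients with Lemma \ref{lem::Ma} at the prime $p$ and the structural constraints $|B|=n/2$ and $B\subseteq\mathbb{Z}_n\setminus 2\mathbb{Z}_n$ to force incompatible divisibility relations among $n$, $p$ and $\sqrt{k}$. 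For $r=2$, Proposition \ref{prop::anti_bipart} no longer applies; but the unique involution of $\mathbb{Z}_n\oplus\mathbb{Z}_p$ is $(n/2,0)$, so $H\subseteq K$ forces $4\mid n$ and $K=2\mathbb{Z}_n\oplus\mathbb{Z}_p$, and a short counting argument from the intersection array $\{2\mu,2\mu-1,\mu,1;1,\mu,2\mu-1,2\mu\}$ gives $|S_2|=|K|-2$, whence $S_2=K\setminus H$ and the halved graph is the cocktail-party graph $K_{(np/4)\times 2}$. Applying characters to the defining equation $\underline{S_1}^2=\mu\,e+\mu\,\underline{K}-\mu\,h$ coming from Lemma \ref{lem::DR}, I expect to obtain $\chi_g(\underline{S_1})=\pm\sqrt{2\mu}$ whenever $\chi_g$ is non-trivial on $K$ with odd first coordinate, and the requirement that $\sqrt{2\mu}=\sqrt{np/4}$ belong to $\mathbb{Q}(\omega_n)$, together with the extra constraint that $S_1$ must form an $H$-transversal inside $G\setminus K$, should close the argument. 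These two end-game subcases ($r=p$ and $r=2$) are where I anticipate the main obstacle, since neither admits the clean cyclotomic disjointness available in the $r\neq p$ subcase, and both require more delicate algebraic-integer bookkeeping on top of the machinery developed in Sections \ref{section::3} and \ref{section::4}.
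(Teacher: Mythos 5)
Your proposal has a genuine gap precisely where the lemma's difficulty lies. Your reduction to prime $r$ (minimal counterexample plus quotienting by a subgroup of the antipodal class) and your cyclotomic-disjointness treatment of the subcase where $r$ is an odd prime different from $p$ are plausible and parallel Lemma \ref{lem::key1}, but they are detours: the paper shows that $r=p$ is the \emph{only} case that can occur, and that is exactly the case you explicitly leave unresolved (``I plan to\dots'', ``should close the argument''). The missing idea is that, by Lemma \ref{lem::DR} and Lemma \ref{lem::antipodal_DRG}(ii), the relation $\underline{S}^2=k\cdot e+\mu(\underline{S_0\cup S_2\cup S_4}-\underline{S_0\cup S_4})$ exhibits $S$ as an $(r\mu,r,r\mu,\mu)$-relative difference set relative to $S_0\cup S_4$; Lemma \ref{lem::NRDS} then forces the order $n/2$ of an element of $2\mathbb{Z}_n\oplus\mathbb{Z}_p$ to divide $r\mu$, which together with $np=2r^2\mu$ gives $r=p$ outright. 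No induction, no composite-$r$ case, and no $r=2$ case ever arise.

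For the remaining case $r=p$, your sketch does not reach a contradiction. The paper's argument is: by Proposition \ref{prop::an2}, $\frac{n}{2\sqrt{k}}$ is an integer $>1$; choosing any prime $q\mid\frac{n}{2\sqrt{k}}$, the character values $\chi_l(\underline{B})$ are all divisible by $q$, so Lemma \ref{lem::Ma} (applied to $\underline{B}$ itself, at $q$, not at $p$, and not to $\underline{B}^p$) forces $\underline{B}=\underline{\tfrac{n}{q}\mathbb{Z}_n}\cdot\underline{X_2}$ because the coefficients of $\underline{B}$ are $0$ or $1$; then $\chi_{l_0}(\underline{B})=0$ for any $l_0\in(1+2\mathbb{Z}_n)\setminus q\mathbb{Z}_n$, contradicting $\chi_{l_0}(\underline{B})=\frac{n}{2\sqrt{k}}\psi(i_0)\neq 0$. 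Your proposed bookkeeping relies on the claim $B\subseteq\mathbb{Z}_n\setminus2\mathbb{Z}_n$, which is not justified ($B$ is a set of characters attaining the eigenvalue $\sqrt{k}$, not a subset of the odd coset), and your $r=2$ endgame cannot work as stated: $\pm\sqrt{2\mu}$ is an eigenvalue of a Cayley graph over $\mathbb{Z}_n\oplus\mathbb{Z}_p$ and therefore automatically lies in $\mathbb{Q}(\omega_{np})$, so no contradiction follows from that membership alone.
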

	\begin{proof}
		By contradiction, assume that  $\Gamma=\mathrm{Cay}(\mathbb{Z}_{n}\oplus\mathbb{Z}_{p},S)$ is an antipodal bipartite distance-regular Cayley graph with diameter $4$. Then $n$ is even and the bipartition set of $\Gamma$ containing the identity vertex is $S_0\cup S_2 \cup S_4=2\mathbb{Z}_n\oplus\mathbb{Z}_p$. Let $k$ and $r$ ($r\geq 2$) denote the valency and the common size of antipodal classes (or fibres) of $\Gamma$, respectively. 
		By Lemma \ref{lem::antipodal_DRG} (ii),
		\begin{equation}\label{equ::key2.1}
			np=2r^2\mu~~\mbox{and}~~k=r\mu,
		\end{equation}
		and $\Gamma$ has the intersection array $\{r\mu, r\mu-1,(r-1)\mu, 1;1,\mu,r\mu-1,r\mu\}$. Moreover, by Lemma \ref{lem::DR},  
		\begin{equation}\label{equ::key2.2}
			\underline{S}^2=k\cdot e+\mu\underline{S_2}=k\cdot 0+\mu(\underline{2\mathbb{Z}_n\oplus\mathbb{Z}_p}-\underline{S_0\cup S_4}).
		\end{equation}
		Note that $S_0\cup S_4$ is the antipodal class of $\Gamma$ containing the identity vertex, and so is a subgroup of $S_0\cup S_2\cup S_4=2\mathbb{Z}_n\oplus \mathbb{Z}_p$. Since $S$ is inverse closed, from \eqref{equ::key2.1} and \eqref{equ::key2.2}	we see that   $S$ is exactly an  $(r\mu,r,r\mu,\mu)$-relative difference set relative to $S_0\cup S_4$ in $S_0\cup S_2\cup S_4=2\mathbb{Z}_n\oplus\mathbb{Z}_p$. Then from Lemma \ref{lem::NRDS} we can deduce that  $\frac{n}{2}=\frac{r^2\mu}{p}$ is a divisor of $r\mu$, that is, $r=p$. Furthermore, by Proposition \ref{prop::anti_bipart}, we may assume that $S_0\cup S_4=(0,\mathbb{Z}_p)$. In this context, by Proposition \ref{prop::an2}, the sets $R_i$, for $i\in \mathbb{Z}_p$, form a partition of $\mathbb{Z}_n\setminus 2\mathbb{Z}_n=1+2\mathbb{Z}_n$, and for every non-trivial character $\psi\in \widehat{\mathbb{Z}_p}$, there exists a non-empty set $B$ in $\mathbb{Z}_n$ such that
		\begin{equation}\label{equ::key2.3}
			\chi_l(\underline{B}) = \frac{n}{2\sqrt{k}}\left( \sum_{i \in \mathbb{Z}_p} \psi(i) a_{-l}(\underline{R_i}) +\sqrt{k} \cdot a_{-l}(0) \right)
			~\mbox{for all $l \in \mathbb{Z}_n$}.	\end{equation}
		Moreover, we have  $\frac{n}{2\sqrt{k}}\in\mathbb{Z}$. Clearly, $\frac{n}{2\sqrt{k}}\neq 1$ by \eqref{equ::key2.1}. Let $q$ be a prime dividor of $\frac{n}{2\sqrt{k}}$. Since the sets $R_i$, for $i\in \mathbb{Z}_p$, form a partition of $1+2\mathbb{Z}_n$, we can deduce from  \eqref{equ::key2.3} that  $\chi_l(\underline{B})\equiv 0\pmod q$ for all $l\in \mathbb{Z}_n$. Then, by Lemma \ref{lem::Ma}, there exist some $\underline{X_1},\underline{X_2}\in \mathbb{Z}\cdot\mathbb{Z}_n$ with non-negative cofficients only such that 	\begin{equation*}
			\underline{B}=q\underline{X_1}+\underline{\frac{n}{q}\mathbb{Z}_n}\cdot\underline{X_2}.
		\end{equation*}
		Since $q>1$ and the  cofficients of $\underline{B}$ in $\mathbb{Z}\cdot \mathbb{Z}_n$ is either $0$ or $1$, we assert that 
		\begin{equation*}
			\underline{B}=\underline{\frac{n}{q}\mathbb{Z}_n}\cdot\underline{X_2}.
		\end{equation*}
		Note that $(1+2\mathbb{Z}_n)\setminus q\mathbb{Z}_n\neq \varnothing$. Taking $l_0\in (1+2\mathbb{Z}_n)\setminus q\mathbb{Z}_n$, we have   $\chi_{l_0}(\underline{B})=\chi_{l_0}(\underline{\frac{n}{q}\mathbb{Z}_n})\cdot \chi_{l_0}(\underline{X_2})=0$. Let $i_0\in\mathbb{Z}_p$ be such  that $-l_0\in R_{i_0}$. Then   \eqref{equ::key2.3} gives that  $\chi_{l_0}(\underline{B})=\frac{n}{2\sqrt{k}}\psi(i_0)$, and hence $\psi(i_0)=0$, which is impossible. 
		
		Therefore,  we conclude that there are no antipodal bipartite distance-regular Cayley graphs with diameter $4$ over $\mathbb{Z}_{n}\oplus\mathbb{Z}_{p}$.
	\end{proof}
	
	Now we are in a position to give the proof of Theorem \ref{thm::main}.
	
	\renewcommand\proofname{\it{Proof of Theorem \ref{thm::main}}}
	\begin{proof}
		First of all, it is easy to verify that the graphs listed in (i)-(iii) are  distance-regular Cayley graphs over $\mathbb{Z}_{n}\oplus\mathbb{Z}_{p}$. Furthermore, by Lemma \ref{lem::1}, the graph listed in (iv) is a distance-regular graph with diameter $2$. 
		
		Conversely, suppose that $\Gamma=\mathrm{Cay}(\mathbb{Z}_{n}\oplus\mathbb{Z}_{p},S)$ is a  distance-regular Cayley graph over $\mathbb{Z}_{n}\oplus\mathbb{Z}_{p}$. If $n=p$, then  from Lemma \ref{lem::1} we see that $\Gamma$ is isomorphic to one of the graphs listed in (i), (ii) and (iv). Thus we may assume that $n\neq p$.  If $\Gamma$ is primitive, by Corollary \ref{cor::pri_DRG}, $\Gamma$ is isomorphic to the complete graph $K_{np}$, as desired. Now suppose that $\Gamma$ is imprimitive. Clearly, $\Gamma$ cannot be isomorphic to a cycle because it is a Cayley graph over $\mathbb{Z}_{n}\oplus\mathbb{Z}_{p}$. Thus $k\geq 3$, and it suffices to consider the  following three situations.
		
		{\flushleft \bf Case A.} $\Gamma$ is antipodal but not bipartite.
		
		By Lemma \ref{lem::imprimitive} and Lemma \ref{lem::DRG_dq}, the antipodal quotient $\overline{\Gamma}$ of $\Gamma$ is a primitive distance-regular  Cayley graph over the cyclic group or the group $\mathbb{Z}_{n'}\oplus\mathbb{Z}_{p}$ for some $n'\mid n$. Then it follows from Theorem \ref{thm::cir_DRG}, Corollary \ref{cor::pri_DRG} and Lemma \ref{lem::1} that $\overline{\Gamma}$ is a complete graph, a cycle of prime order, a Payley graph of prime order, or the line graph of a transversal design $TD(r,p)$ with $2\leq r\leq p-1$.  If $\overline{\Gamma}$ is a cycle of prime order, then $\Gamma$ would be a cycle, which is impossible. If $\overline{\Gamma}$ is a Payley graph of prime order, by  Lemma \ref{lem::confer}, we also deduce a contradiction. If $\overline{\Gamma}$ is the line graph of a transversal design $TD(r,p)$ with $2\leq r\leq p-1$,  then $d=4$ or $5$. By Lemma \ref{lem::antipodal cover of line graph}, we assert that $d=4$ and $r=2$, and hence  $\overline{\Gamma}$ is the Hamming graph $H(2,p)$. However, by Lemma \ref{lem::anti_Ham}, $H(2,p)$ has no distance-regular antipodal covers for $p>2$, and we obtain a contradiction. 
		Therefore, $\overline{\Gamma}$ is a complete graph, and so $d=2$ or $3$. By Lemma \ref{lem::key1}, $d\neq 3$, whence $d=2$. Since complete multipartite graphs are the only antipodal distance-regular graphs with diameter $2$, we conclude that  $\Gamma$ is  a complete multipartite graphs with at least three parts.
		
		{\flushleft \bf Case B.} $\Gamma$ is antipodal and bipartite.
		
		In this situation, $n$ is even. If $d$ is odd, by Lemma \ref{lem::imprimitive}, $\overline{\Gamma}$ is primitive. Also, by  Lemma \ref{lem::DRG_dq},  $\overline{\Gamma}$ is a distance-regular Cayley graph over the cyclic group or the group  $\mathbb{Z}_n'\oplus\mathbb{Z}_p$ for some $n'\mid n$. 
		As in Case A, we assert that $\overline{\Gamma}$ is a complete graph. Hence, $d=3$. Considering that $\Gamma$ is antipodal and bipartite, we obtain  $\Gamma\cong K_{\frac{np}{2},\frac{np}{2}}-\frac{np}{2}K_2$. Moreover, we assert that $n/2$ must be odd, i.e., $n\equiv 2\pmod 4$,  since $\Gamma$ is a Cayley graph over $\mathbb{Z}_n\oplus \mathbb{Z}_p$.  Now suppose that $d$ is even. Then Lemma \ref{lem::imprimitive} and Lemma \ref{lem::DRG_dq} indicate that $\frac{1}{2}\Gamma$ is  an antipodal non-bipartite distance-regular Cayley graph  over  $\mathbb{Z}_{\frac{n}{2}}\oplus\mathbb{Z}_p$ with diameter $d_{\frac{1}{2}\Gamma}=d/2$.  Clearly, $d\neq 2$. By Lemma \ref{lem::key2}, $d\neq 4$.  
		Thus $d\geq 6$ and   $d_{\frac{1}{2}\Gamma}=d/2\geq 3$.  However, this is impossible by Case A.

		{\flushleft \bf Case C.} $\Gamma$ is bipartite but not antipodal.
		
		In this situation, $n$ is even. By Lemma \ref{lem::imprimitive} and Lemma \ref{lem::DRG_dq}, $\frac{1}{2}\Gamma$ is a primitive distance-regular Cayley graph over $\mathbb{Z}_{\frac{n}{2}}\oplus\mathbb{Z}_p$. As in Case A,  $\frac{1}{2}\Gamma$ is a complete graph or the line graph of a transversal design $TD(r,p)$ with $2\leq r\leq p-1$.  In the former case, we have $d=2$ or $3$. If $d=2$, then $\Gamma$ is a complete bipartite graph, as desired. If $d=3$, then  $\Gamma$ is a  non-antipodal bipartite distance-regular graph with diameter $3$ over the abelian group $\mathbb{Z}_n\oplus \mathbb{Z}_p$. By Proposition \ref{prop::non-antipodal_bipartite}, the dual graph $\widehat{\Gamma}$ of $\Gamma$ is an antipodal non-bipartite distance-regular graph with diameter $3$ over $\mathbb{Z}_n\oplus \mathbb{Z}_p$. However, there are no such graphs by  Lemma \ref{lem::key1}, and we obtain a contradiction. In the latter case, we assert that $\mu=1$ by Lemma \ref{lem::key0}. If there exist two distinct elements $a,b\in S$ such that $-a\neq b$, then $0,a,a+b,b,0$ would lead to a cycle of order $4$ in $\Gamma$, and hence $\mu\geq 2$, a contradiction. Thus $S=\{a,-a\}$ for some $a\in \mathbb{Z}_n\oplus \mathbb{Z}_p$, and we see that $\Gamma$ is a cycle, which is also impossible.
		
		We complete the proof.
	\end{proof}

	\section*{Declaration of competing interest}

The authors declare that they have no known competing financial interests or personal
relationships that could have appeared to influence the work reported in this paper.

\section*{Data availability}

No data was used for the research described in the article.

	\section*{Acknowledgements}

The authors would like to thank Professor \v{S}tefko Miklavi\v{c} for many helpful suggestions. X. Huang was supported by National Natural Science Foundation of China (Grant No. 12471324) and Natural Science Foundation of Shanghai (Grant No. 24ZR1415500). L. Lu was supported by National Natural Science Foundation of China (Grant Nos. 12371362, 12001544) and  Natural Science Foundation of Hunan Province  (Grant No. 2021JJ40707).

\end{document}